\documentclass[11pt,a4paper]{amsart}
\usepackage{amssymb}
\usepackage{amsmath}
\usepackage{todonotes}
\usepackage{esint}
\usepackage{mathtools}
\usepackage{changes}
\usepackage{tabu}
\usepackage[margin=1.5in]{geometry}
\usepackage[mathcal]{euscript}
\usepackage{color}
\definecolor{cobalt}{RGB}{61,89,171}
\usepackage[colorlinks,citecolor=cobalt,linkcolor=cobalt,urlcolor=cobalt,pdfpagemode=UseNone,backref = page]{hyperref}
\usepackage{makecell}

\newtheorem{thm}{Theorem}[section]
\newtheorem{corollary}[thm]{Corollary}
\newtheorem{lemma}[thm]{Lemma}
\newtheorem{claim}[thm]{Claim}
\newtheorem{proposition}[thm]{Proposition}

\theoremstyle{definition}
\newtheorem{definition}[thm]{Definition}
\newtheorem{example}[thm]{Example}

\theoremstyle{remark}
\newtheorem{remark}[thm]{Remark}

\newcommand{\hook}{\lrcorner \,}
\newcommand{\re}{\mathrm{Re}}
\newcommand{\im}{\mathrm{Im}}

\newcommand{\bs}{\backslash}
\newcommand{\la}{\langle}
\newcommand{\ra}{\rangle}

\newcommand{\bQ}{{\mathbb Q}}
\newcommand{\bR}{{\mathbb R}}
\newcommand{\bZ}{{\mathbb Z}}
\newcommand{\bC}{{\mathbb C}}
\newcommand{\bH}{{\mathbb H}}

\newcommand{\mfa}{\mathfrak{a}}

\newcommand{\mfg}{\mathfrak{g}}
\newcommand{\mfh}{\mathfrak{h}}
\newcommand{\mfn}{\mathfrak{n}}
\newcommand{\mfq}{\mathfrak{q}}

\newcommand{\mfz}{\mathfrak{z}}

\newcommand{\ad}{\operatorname{ad}}
\newcommand{\tr}{\operatorname{tr}}

\newcommand{\Alt}{\operatorname{Alt}}
\newcommand{\Der}{\operatorname{Der}}
\newcommand{\End}{\operatorname{End}}

\newcommand{\id}{\operatorname{id}}

\newcommand{\SO}{\operatorname{SO}}
\newcommand{\GL}{\operatorname{GL}}

\newcommand{\Sp}{\operatorname{Sp}}

\renewcommand{\Re}{\operatorname{Re}}
\renewcommand{\Im}{\operatorname{Im}}

\numberwithin{equation}{section}

\title{Complex symplectic structures on Lie algebras}

\subjclass[2010]{53C30, 53C56, 53D05, 22E25.}
\keywords{Complex symplectic structures, Lie algebras, hypercomplex structures, nilmanifolds, solvmanifolds.}

\author[G.~Bazzoni]{Giovanni Bazzoni}
\address[G.~Bazzoni]{Departamento de \'Algebra, Geometr\'ia y Topolog\'ia \\ Universidad Complutense de Madrid\\ Plaza de Ciencias 3\\ 28040 Madrid\\ Spain}
\email{gbazzoni@ucm.es}

\author[M.~Freibert]{Marco Freibert}
\address[M.~Freibert]{Department of Mathematics \\ King's College London \\ Strand \\ London WC2R 2LS \\ United Kingdom}
\email{marco.freibert@kcl.ac.uk}
\address{Mathematisches Seminar\\
Christian-Albrechts-Universit\"at zu Kiel\\
Ludewig-\linebreak Meyn-Strasse 4\\
D-24098 Kiel\\
Germany}
\email{freibert@math.uni-kiel.de}

\author[A.~Latorre]{Adela Latorre}
\address[A.~Latorre]{Centro Universitario de la Defensa\,-\,I.U.M.A., Academia General
Mili\-tar, Crta. de Huesca s/n. 50090 Zaragoza, Spain}
\email{adela@unizar.es}

\author[B.~Meinke]{Benedict Meinke}
\address[B.~Meinke]{Institut f\"ur Differentialgeometrie\\
Gottfried Wilhelm Leibniz Universit\"at Hannover\\
Welfengarten 1\\
D-30167 Hannover\\
Germany}
\email{meinke@math.uni-hannover.de}

\date{}

\begin{document}
\begin{abstract}
We investigate Lie algebras endowed with a complex symplectic structure and develop a method, called \emph{complex symplectic oxidation}, to construct certain complex symplectic Lie algebras of dimension $4n+4$ from those of dimension $4n$. We specialize this construction to the nilpotent case and apply complex symplectic oxidation to classify eight-dimensional nilpotent complex symplectic Lie algebras.
\end{abstract}

\maketitle
\section{Introduction}
A \emph{complex symplectic}, or \emph{holomorphic symplectic}, manifold $(M,J,\omega_\bC)$ is a smooth manifold $M$ together with an integrable almost complex structure $J$ and a closed, non-degenerate $(2,0)$-form $\omega_\bC$. Since $(M,J)$ is a complex manifold, the real dimension of $M$ is even; the fact that $\omega_\bC$ is non-degenerate has two further consequences: the \emph{complex} dimension of $M$ is equal to $2n$, and $\omega_\bC^n$ is a nowhere vanishing section of $\mathcal{K}_M$, the canonical bundle of $(M,J)$, which is therefore holomorphically trivial. Special examples of complex symplectic manifolds are given by hyperk\"ahler
 manifolds $(M,g,I,J,K)$, which carry naturally a complex symplectic structure $\omega_\bC\coloneqq\omega_J+i\omega_K$; here $\omega_J(\cdot,\cdot)\coloneqq g(\cdot,J\cdot)$ and similarly for $\omega_K$. The converse is not true: primary Kodaira surfaces are compact complex symplectic manifolds which are not hyperk\"ahler. In real dimensions $\geq 8$, Todorov \cite{Todorov} asked whether every irreducible compact complex symplectic manifold is k\"ahlerian. In \cite{Guan1}, Guan constructed a complex symplectic structure on the complexification of the Kodaira-Thurston manifold, which is 8-dimensional and does not admit a (hyper-)K\"ahler structure. Moreover, the same author constructed in \cite{Guan2,Guan3} a \emph{simply connected} non-K\"ahler compact complex symplectic manifold in each dimension $4n$; his examples are in spirit analogous to the two series of compact hyperk\"ahler manifolds obtained by Beauville in \cite{Beauville} and they use in a crucial way primary Kodaira surfaces. It is somehow surprising, and perhaps a reflection of the scarcity of compact hyperk\"ahler manifolds, that these are the only known simply connected examples.

It is well-known that primary Kodaira surfaces have the structure of \emph{nilmanifolds}; this is the quotient of a connected, simply connected, nilpotent Lie group~$G$ by a lattice~$\Gamma$ (i.e.~a discrete and cocompact subgroup; see Section \ref{examples} for the relevant details). Now note that a left-invariant complex symplectic structure on~$G$ induces a natural ``invariant'' complex symplectic structure on the nilmanifold~$\Gamma\bs G$ and that any K\"ahler nilmanifold is diffeomorphic to a torus \cite{Hasegawa}. Thus, complex symplectic nilmanifolds
never carry hyperk\"ahler metrics provided that~$G$ is non-abelian. Moreover, left-invariant complex symplectic structures on~$G$ are in one-to-one correspondence with complex symplectic structures on the associated Lie algebra~$\mfg$ and~$G$ admits a lattice if and only if~$\mfg$ admits a rational structure~\cite{Malcev}.

Summarizing, any complex symplectic structure on a non-abelian nilpotent Lie algebra admitting a rational structure provides us with an example of a compact (nil-)manifold admitting a complex symplectic structure which does not come from a hyperk\"ahler structure. This is the reason why we will focus in this paper on complex symplectic structures on nilpotent Lie algebras. Note also that examples of left-invariant complex symplectic structures on solvmanifolds have been considered in \cite{Cattaneo-Tomassini}.

This topic does not seem to have got much attention in the literature so far and only a very restrictive class of complex symplectic structures, called \emph{special Lagrangian}, on not necessarily nilpotent or solvable Lie algebras was studied in \cite{Cleyton-Poon-Ovando} in detail. 

The purpose of this paper is to delve into left-invariant complex symplectic structures on connected and simply connected Lie groups. When the group is nilpotent or solvable, one may obtain compact examples by modding out a suitable lattice.

The paper is organized as follows:
\begin{itemize}
 \item in Section \ref{geo_struct} we show how complex symplectic structures are related to many other geometric structures and state some open questions;
 \item Section \ref{basics} contains some generalities on complex symplectic structures on Lie algebras;
 \item in Section \ref{oxidation-reduction} we present a method to construct complex symplectic structures on Lie algebras of dimension $4n+4$ starting from a $4n$-dimensional complex symplectic Lie algebra. We call this construction \emph{complex symplectic oxidation} in analogy with the symplectic oxidation, see \cite{Baues-Cortes};
\item in Section \ref{explicit-8-d} we concentrate on the nilpotent case. We show that every complex symplectic nilpotent Lie algebra (NLA) whose underlying complex structure is quasi-nilpotent can be obtained by symplectic
oxidation. Moreover, we show that 8-dimensional complex symplectic NLAs do not admit complex structures of strongly non-nilpotent type. As a consequence, we conclude that any 8-dimensional complex symplectic NLA is the complex symplectic oxidation of a 4-dimensional one. We then explicitly compute the oxidation data for the 4-dimensional complex symplectic NLAs, thus classifying all 8-dimensional complex symplectic NLAs;
 \item Section \ref{examples} contains various examples of nilmanifolds and solvmanifolds endowed with both complex symplectic and related geometric structures.
\end{itemize}

\section{Geometric structures on complex manifolds}\label{geo_struct}

We next explain how, apart from hyperk\"ahler, complex symplectic manifolds in general and complex symplectic Lie algebras in particular are related to a number of geometric structures and give rise to interesting problems. 

In the language of principal bundles, the existence of a geometric structure on an $n$-dimensional manifold can be rephrased in terms of the existence of a reduction of the structure group of the frame bundle of a manifold, which is a principal $\GL(n,\bR)$-bundle, to a certain Lie subgroup $G\subset \GL(n,\bR)$. In this case, one says that the manifold is endowed with a $G$-structure. For every $G$-structure there is a notion of integrability, which is equivalent to the existence of an atlas, which is $G$-adapted up to order two terms.

As an example, a Riemannian metric is an integrable $\mathrm{O}(n)$-structure and a corresponding atlas is given by normal coordinates. Similarly, if $n=2m$,
a symplectic structure is an integrable $\mathrm{Sp}(2m,\bR)$-structure, a corresponding atlas being given by Darboux coordinates, and
a complex structure is an integrable $\GL(m,\bC)$-structure, where a corresponding atlas is simply a holomorphic one.

It is well-known that these three structures can appear simultaneously on a manifold, and that the following relations among subgroups of $\GL(2m,\bR)$ hold:
\begin{equation}\label{symp-real}
\mathrm{O}(2m)\cap \Sp(2m,\bR)\!=\!\mathrm{O}(2m)\cap \GL(m,\bC)\!=\!\Sp(2m,\bR)\cap \GL(m,\bC)\!=\!\mathrm{U}(m)\,.
\end{equation}
This fact is sometimes referred to as K\"ahler geometry, the geometry of an integrable $\mathrm{U}(m)$-structure, lying at the intersection of Riemannian, symplectic and complex geometry. It should also be noted that $\mathrm{U}(m)$ is one of the special holonomy groups of Riemannian manifolds appearing in the celebrated Berger list, see \cite{Berger}.
\begin{remark}\label{compatible}
Any manifold carries a Riemannian metric. There exist examples of symplectic manifolds without K\"ahler structures \cite{McDuff,Oprea-Tralle}, of complex manifolds without K\"ahler structures \cite{Calabi-Eckmann,Hopf} and even of manifolds which are complex and symplectic but still not K\"ahler, see \cite{Bazzoni-Fernandez-Munoz,Bazzoni-Munoz,Thurston}.

\end{remark}

Things can be complicated a bit by fixing an integrable almost complex structure~$I$ on a manifold~$M$ of \emph{even} complex dimension $2n$; with respect to this choice, the frame bundle of $M$ has naturally the structure of a $\GL(2n,\bC)$-principal bundle, whose associated $\GL(2n,\bC)$-structure is integrable. Consider now the following subgroups of $\GL(2n,\bC)$:
\begin{align*}
\GL(n,\bH) & =\{A\in \GL(2n,\bC) \mid A\mathcal{J}=\mathcal{J}\bar{A}\}\,,\\
\Sp(2n,\bC) &=\{A\in \GL(2n,\bC) \mid A^t\mathcal{J} A=\mathcal{J}\}\,,\\
\mathrm{U}(2n) & =\{A\in \GL(2n,\bC) \mid \bar{A}^tA=\mathrm{Id}\}\,,
\end{align*}
where 
\[
\mathcal{J}=
\begin{pmatrix}
 0 & -\mathrm{Id}\\
 \mathrm{Id} & 0
\end{pmatrix}\,.
\]
$\GL(n,\bH)$ consists of those linear transformations of $\bC^{2n}$ which are $\bH$-linear ($\bH$ being the quaternions), where the identification $\bH^n\cong\bC^{2n}$ is given by $z+w\cdot j\mapsto (z,\bar{w})$. $\Sp(2n,\bC)$ denotes those linear transformations of $\bC^{2n}$ which preserve the symplectic form
\[
\omega_\bC=\sum_{i=1}^ne^i\wedge e^{n+i}\,,
\]
being $\{e_i\}_{i=1}^{2n}$ the canonical basis of $\bC^{2n}$. In this setting, the statement which mirrors \eqref{symp-real} is
\[
\mathrm{U}(2n)\cap \Sp(2n,\bC)=\mathrm{U}(2n)\cap \GL(n,\bH)=\Sp(2n,\bC)\cap \GL(n,\bH)=\Sp(n)\,,
\]
where $\Sp(n)$ is the group of linear transformations of $\bH^n$ which preserve the quaternionic scalar product $\langle q,r\rangle=\bar{q}^t r$.\\

In the language of $G$-structures, we have the following nomenclature:
\begin{itemize}
\item an integrable $\GL(n,\bH)$-structure is a \emph{hypercomplex} structure. By spelling out this condition, one finds that $(M,I)$ has a second integrable almost complex structure $J$ which anticommutes with $I$. Thus $K\coloneqq I\circ J$ is a third integrable almost complex structure. Each tangent space to a hypercomplex manifold inherits the structure of a quaternionic vector space. Further details on hypercomplex structures can be found in \cite{Joyce,Obata}; in particular, 8-dimensional nilpotent Lie algebras (NLAs) which carry a hypercomplex structure have been classified in \cite{Dotti-Fino};
\item an integrable $\Sp(2n,\bC)$-structure is a \emph{complex symplectic} or \emph{holomorphic symplectic} structure, see \cite{Boalch}; this is the topic of this paper. In the recent papers \cite{Anthes-Cattaneo-Rollenske-Tomassini,Cattaneo-Tomassini}, the authors show that, in the compact case, complex symplectic manifolds which satisfy the $\partial\overline\partial$-lemma retain some properties of hyperk\"ahler manifolds. Since a complex manifold which satisfies the $\partial\overline\partial$-lemma is formal, see \cite{Deligne-Griffiths-Morgan-Sullivan}, and by a result of Hasegawa \cite{Hasegawa}, a formal nilmanifold is diffeomorphic to a torus, except for tori, the results of \cite{Anthes-Cattaneo-Rollenske-Tomassini,Cattaneo-Tomassini} do not apply to complex symplectic nilmanifolds. However, we shall show that our construction allows to recover an example described in \cite{Cattaneo-Tomassini}, which is a certain 8-dimensional solvmanifold;
\item an integrable $\Sp(n)$-structure is a \emph{hyperk\"ahler} structure. The metric of a hyperk\"ahler manifold is Ricci-flat and such manifolds have attracted a great deal of interest, both from the mathematical and the physical point of view. We refer the reader to \cite{Huybrechts,Joyce2,Salamon2} and the references therein.
\end{itemize}

Thus one can say that hyperk\"ahler geometry lies at the intersection of K\"ahler, hypercomplex and complex symplectic geometry. Again we should note that $\Sp(n)$ is one of the special holonomy groups of the Berger list. Concerning the interplay of these structures on compact manifolds, it was shown by Beauville in \cite[Proposition 4]{Beauville} that on a compact complex manifold of K\"ahler type, of complex dimension $2n$, the existence of a K\"ahler metric whose holonomy group is contained in $\Sp(n)$ is equivalent to the existence of a complex symplectic structure. Hence, a compact K\"ahler manifold which admits a complex symplectic structure is necessarily hyperk\"ahler. As for manifolds which are hypercomplex and K\"ahler, Verbitsky proved in \cite{Verbitsky} that if a compact hypercomplex manifold $(M,I,J,K)$ is such that $(M,I)$ admits a K\"ahler metric, then $(M,I)$ is complex symplectic and hyperk\"ahler a fortiori, thanks to Beauville's result\footnote{As explained in \cite{Verbitsky}, it is however not a priori clear that the hyperk\"ahler structure provided by Beauville's argument has $(I,J,K)$ as underlying hypercomplex structure, although Verbitsky conjectures this should be the case.}. The situation here is way more constrained than in the real case, see Remark \ref{compatible}. In particular, one can only hope to find examples of compact manifolds which are complex symplectic and hypercomplex but not hyperk\"ahler. Example \ref{ex:csandhc} below shows that such compact manifolds exist.\\

Complex symplectic structures are also related to \emph{hypersymplectic structures}, introduced by Hitchin in \cite{Hitchin}. As it 
happens for every complex simple Lie group, $\Sp(2n,\bC)$ has the following two real forms: the \emph{compact} one, which is $\Sp(n)$, 
and the \emph{split} one, which is $\Sp(2n,\bR)$. A hypersymplectic structure on a complex symplectic manifold $(M,J,\omega_\bC)$ of 
real dimension $4n$ is a further reduction of the frame bundle from $\Sp(2n,\bC)$ to $\Sp(2n,\bR)$. This is accomplished by a real structure, i.e.~an endomorphism $S\colon TM\to TM$ which satisfies 
$S^2=\mathrm{Id}$ and anticommutes with $J$. $\Sp(2n,\bR)$ is not compact, but it acts on $\bR^{4n}$ as a subgroup of $\SO(2n,2n)$; hypersymplectic metrics live therefore outside the Riemannian world, 
in the realm of neutral metrics, i.e.~metrics with signature $(2n,2n)$. Further general information on hypersymplectic structures can be 
found in \cite{Dancer-Swann}; hypersymplectic structures on NLAs and nilmanifolds
have been investigated in \cite{Andrada,Andrada-Dotti,Fino-Pedersen-Poon-Sorensen}.

%

\section{Basic definitions and properties}\label{basics}

\subsection{Generalities on Lie algebras}

Let $G$ be a Lie group with Lie algebra $\mfg$. To a $\mfg$-module $W$ we can associate the Chevalley-Eilenberg complex $(C^\bullet(\mfg;W),d_W)$, whose elements are linear maps $\alpha\colon \Lambda^q\mfg\to W$; the differential $d_W\colon C^q(\mfg;W)\to C^{q+1}(\mfg;W)$ is defined by
\begin{align}\label{CE_diff}
 (d_W\alpha)(X_0,\ldots,X_q)&=\sum_{i<j}(-1)^{i+j}\alpha([X_i,X_j],X_0,\ldots,\hat{X_i},\ldots,\hat{X_j},\ldots,X_q)\nonumber\\
 &+\sum_{i=0}^qX_i\cdot\alpha(X_0,\ldots,\hat{X_i},\ldots,X_q)\,.
\end{align}

For a trivial $\mfg$-module $W$ we denote $d_W$ by $d$ or $d_\mfg$, if confusion arises; in this case, \eqref{CE_diff} reduces to 
\begin{equation*}
 (d\alpha)(X_0,\ldots,X_q)=\sum_{i<j}(-1)^{i+j}\alpha([X_i,X_j],X_0,\ldots,\hat{X_i},\ldots,\hat{X_j},\ldots,X_q)\,.
\end{equation*}

When $W$ equals the trivial module $R$, $(C^\bullet(\mfg;\bR),d)$ is the usual Chevalley-Eilenberg complex $(\Lambda^\bullet\mfg^*,d)$, which computes the Lie agebra cohomology of $\mfg$. In this case $d\colon\mfg^*\to\Lambda^2\mfg^*$ satisfies
\[
 d\alpha(X,Y)=-\alpha([X,Y])\,,
\]
hence $d$ is the transpose of the linear map $[\cdot,\cdot]\colon\Lambda^2\mfg\to\mfg$ and $d^2=0$ is equivalent to the Jacobi identity in $\mfg$; thus, the knowledge of $d$ is equivalent to the knowledge of $[\cdot,\cdot]$. To denote a Lie algebra $\mfg$ we use Salamon's notation, which is best explained by an example: $\mfg=(0,0,0,12,13+3\cdot 14)$ means that the 5-dimensional Lie algebra $\mfg$ has basis $\{e_1,\ldots,e_5\}$ with dual basis $\{e^1,\ldots,e^5\}$ and differential $d\colon\mfg^*\to\Lambda^2\mfg^*$ given by
\[
 de^1=0, \quad  de^2=0, \quad  de^3=0 \quad de^4=e^{12} \quad \textrm{and} \quad de^5=e^{13}+3e^{14}\,;
\]
the short-hand notation $e^{ij}\coloneqq e^i\wedge e^j$ will be used throughout.\\

An \emph{almost complex structure} on a Lie algebra $\mfg$ is an endomorphism $J\colon\mfg\to\mfg$ satisfying $J^2=-\id$. The natural extension of $J$ to the complexification $\mfg_\bC=\mfg\otimes\bC$ diagonalizes with eigenvalues $i$ and $-i$; the eigenspaces are
\[
 \mfg^{(1,0)}=\{X-iJX \mid X\in\mfg\} \quad \textrm{and} \quad  \mfg^{(0,1)}=\{X+iJX \mid X\in\mfg\}\,.
\]
$J$ also gives an endomorphism $J^*$ of $\mfg^*$, $J^*\alpha=\alpha\circ J$, providing in turn a splitting of $\mfg^*_\bC=\mfg^*\otimes\bC$
\[
 \mfg^{*(1,0)}=\{\alpha-iJ^*\alpha \mid \alpha\in\mfg^*\} \quad \textrm{and} \quad  \mfg^{*(0,1)}=\{\alpha+iJ^*\alpha \mid \alpha\in\mfg^*\}\,.
\]
With this notation, giving an almost complex structure on $\mfg$ is equivalent to giving the subspace $\mfg^{*(1,0)}$. By definition, $J$ is integrable if and only if the Nijenhuis tensor
\[
 N_J(X,Y)\coloneqq [X,Y]+J([JX,Y]+[X,JY])-[JX,JY]
\]
vanishes for every $X,Y\in\mfg$. This is known to be equivalent to the subspace $\mfg^{(1,0)}$ being closed with respect to the Lie bracket and also to
\[
d(\mfg^{*(1,0)})\subset\Lambda^2\mfg^{*(1,0)}\oplus \mfg^{*(1,0)}\otimes\mfg^{*(0,1)}\,.
\]
We call an integrable almost complex structure simply a \emph{complex structure}.

Finally, a \emph{symplectic structure} on a Lie algebra $\mfg$ of dimension $2n$ is nothing but a cocycle $\omega\in\Lambda^2\mfg^*$ which is non-degenerate, i.e.~$\omega^n\neq 0$.


%
%
%

\subsection{Complex symplectic manifolds and Lie algebras}
We start with the usual definition of a \emph{complex symplectic structure}:
\begin{definition}
A \emph{complex symplectic structure} on a manifold $M$ is a pair $(J,\omega_{\bC})$ consisting of a complex structure $J$
and a 2-form $\omega_\bC\in\Omega^2(M,\bC)$ which is non-degenerate, closed and of type $(2,0)$. In this case, we call $(M,J,\omega_{\bC})$ a \emph{complex symplectic manifold}.
\end{definition}
As already stated in the introduction, a complex symplectic structure is a special kind of an $\mathrm{Sp}(2n,\bC)$-structure.
Since $\mathrm{Sp}(2n,\bC)=\GL(2n,\bC)\cap \mathrm{Sp}(4n,\bR)$, there has to be an equivalent description of
a complex symplectic structure in terms of a pair $(J,\omega)$ consisting of an almost complex
structure $J$ and non-degenerate two-form $\omega$ with certain properties. This description is as follows:
\begin{lemma}\label{le:equivalencecomplexsymplectic}
Let $M$ be a manifold. Then the set of all complex symplectic structures $(J,\omega_\bC)$ on $M$ is bijective to the set of all pairs $(J,\omega)$ of complex structures $J$ and symplectic 2-forms $\omega$ such that $J$ is symmetric with respect to $\omega$,
i.e.~$\omega(JX,Y)=\omega(X,JY)$ for all $X,Y\in \mathfrak{X}(M)$ holds. The bijection is given by $(J,\omega_\bC)\mapsto (J,\Re(\omega_\bC))$
with inverse $(J,\omega)\mapsto (J,\omega-i \omega(J\cdot,\cdot))$.
\end{lemma}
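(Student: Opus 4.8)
The plan is to verify that the two stated maps are well-defined and mutually inverse, working pointwise on each tangent space (so that the entire content is linear algebra on a real vector space equipped with a complex structure). First I would set up the standard correspondence: given a pair $(J,\omega_\bC)$ as in the definition, decompose the complex-valued $2$-form into real and imaginary parts, $\omega_\bC = \omega + i\sigma$ with $\omega = \Re(\omega_\bC)$ and $\sigma = \Im(\omega_\bC)$ real $2$-forms. The two defining properties of $\omega_\bC$ — being closed and being of type $(2,0)$ — must be translated into properties of $\omega$ and $\sigma$. Closedness gives $d\omega = 0$ and $d\sigma = 0$ immediately. The type condition $\omega_\bC \in \Omega^{2,0}$ is equivalent to $\omega_\bC(X,Y) = \omega_\bC(JX,JY) \cdot (\text{something})$; more precisely, a $2$-form is of type $(2,0)+(0,2)$ iff $\alpha(JX,JY) = -\alpha(X,Y)$, and of type $(2,0)$ precisely when additionally $\alpha(JX,Y) = i\,\alpha(X,Y)$ after complexification. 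Spelling this out for $\omega_\bC = \omega + i\sigma$ yields $\sigma(X,Y) = -\omega(JX,Y)$ (i.e. $\sigma = -\omega(J\cdot,\cdot)$) together with the symmetry relation $\omega(JX,Y) = \omega(X,JY)$, which is exactly the condition that $J$ be $\omega$-symmetric. So the type-$(2,0)$ condition on $\omega_\bC$ is equivalent to the pair: $\sigma = \omega(J\cdot,\cdot)$ (up to the sign matching the formula in the statement) and $J$ symmetric w.r.t. $\omega$.

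Next I would check non-degeneracy passes back and forth. If $\omega_\bC$ is non-degenerate as a complex bilinear form on $T_pM \otimes \bC$, one shows its restriction forces $\omega = \Re(\omega_\bC)$ to be non-degenerate as a real form on $T_pM$ (if $\omega(X,\cdot) = 0$ then using $\sigma(X,\cdot) = \pm\omega(JX,\cdot)$ one gets $\omega_\bC(X - iJX, \cdot) = 0$ on all of $\mfg^{(1,0)}$, and the type condition kills it on $\mfg^{(0,1)}$ too, contradicting non-degeneracy unless $X = 0$). Conversely, given $\omega$ non-degenerate and $J$ $\omega$-symmetric, the form $\omega_\bC := \omega - i\omega(J\cdot,\cdot)$ is of type $(2,0)$ (by reversing the computation above, the symmetry of $J$ is exactly what makes the $(1,1)$-component vanish), is closed since both $\omega$ and $\omega(J\cdot,\cdot)$ are closed — here one needs that $d(\omega(J\cdot,\cdot)) = 0$ follows from $d\omega = 0$, $J$ integrable, and $J$ $\omega$-symmetric, which is a short computation using the type decomposition of $d$ on forms — and is non-degenerate on $\mfg_\bC$ because it is non-degenerate on the maximal isotropic-complement pair $\mfg^{(1,0)}, \mfg^{(0,1)}$. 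Finally I would observe that the two assignments $(J,\omega_\bC) \mapsto (J, \Re\,\omega_\bC)$ and $(J,\omega) \mapsto (J, \omega - i\omega(J\cdot,\cdot))$ are inverse to each other: composing one way returns $\omega_\bC$ because its imaginary part is determined by its real part via the type condition, and composing the other way is immediate from $\Re(\omega - i\omega(J\cdot,\cdot)) = \omega$.

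The main obstacle, and the one point deserving genuine care rather than bookkeeping, is the closedness direction: showing that $d\omega = 0$ together with integrability of $J$ and $\omega$-symmetry of $J$ implies $d\bigl(\omega(J\cdot,\cdot)\bigr) = 0$, i.e. that $\sigma$ is automatically closed. The clean way is to note that $\omega_\bC := \omega - i\sigma$ has already been shown to be of type $(2,0)$; then $d\omega_\bC$ is a priori a $3$-form of types $(3,0)+(2,1)$ by integrability of $J$; but $d\omega_\bC = d\omega - i\,d\sigma$ and $d\omega = 0$, so $d\omega_\bC = -i\,d\sigma$ is purely imaginary, i.e. $\overline{d\omega_\bC} = -d\omega_\bC$; on the other hand $\overline{d\omega_\bC}$ has types $(0,3)+(1,2)$, and a form that is simultaneously of types $(3,0)+(2,1)$ and $(0,3)+(1,2)$ is zero. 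Hence $d\sigma = 0$. Everything else is routine pointwise linear algebra plus the observation that all conditions are tensorial/pointwise so that the manifold statement follows from the vector-space statement applied fiberwise; in particular the lemma applies verbatim to Lie algebras, which is the case we actually use.
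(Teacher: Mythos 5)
Your proposal is correct, and it diverges from the paper's argument precisely at the one step you flag as the main obstacle. The translation of the type-$(2,0)$ condition into the pair ``$\Im(\omega_\bC)=-\Re(\omega_\bC)(J\cdot,\cdot)$ plus $\omega(JX,Y)=\omega(X,JY)$'' is the same as in the paper, and your added discussion of non-degeneracy (which the paper leaves implicit; the cleanest version is $(2\omega)^{2n}=\binom{2n}{n}\,\omega_\bC^n\wedge\overline{\omega_\bC^n}$, so the two non-degeneracies are equivalent) is a welcome supplement rather than a deviation. Where you genuinely differ is in proving $d\bigl(\omega(J\cdot,\cdot)\bigr)=0$: the paper establishes the pointwise tensor identity
\begin{equation*}
d(\omega(J\cdot,\cdot))(X,Y,Z)=\tfrac{1}{2}\left(d\omega(JX,Y,Z)+d\omega(X,JY,Z)+d\omega(X,Y,JZ)\right)
\end{equation*}
by direct computation and concludes from $d\omega=0$, whereas you argue by bidegree: integrability forces $d\omega_\bC\in\Omega^{3,0}\oplus\Omega^{2,1}$, while $d\omega_\bC=-i\,d\sigma$ is purely imaginary and hence equal to minus its conjugate, which lives in $\Omega^{0,3}\oplus\Omega^{1,2}$, so $d\omega_\bC=0$. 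Your route is arguably cleaner and makes transparent exactly where integrability of $J$ enters (namely in $d=\partial+\delbar$), at the cost of invoking the Dolbeault bigrading; the paper's identity is more elementary and self-contained but hides the role of integrability inside an unverified ``short computation.'' One cosmetic remark: your characterization of type $(2,0)$ is slightly redundant, since $\alpha(JX,Y)=i\,\alpha(X,Y)$ together with skew-symmetry already implies $\alpha(JX,JY)=-\alpha(X,Y)$; this does not affect the argument.
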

\begin{proof}
Let $(J,\omega_\bC)$ be a complex symplectic structure and $X,Y\in \mathfrak{X}(M)$. Since $\omega_\bC$ is of type $(2,0)$, one has $\omega_\bC(J X, JY)=-\omega_\bC(X,Y)$, giving $\Re(\omega_\bC)(JX,Y)=-\Re(\omega_{\bC})(JX,J(JY))=\Re(\omega_{\bC})(X,JY)$. Thus $\Re(\omega_{\bC})$
is, in fact, symmetric with respect to $J$. Moreover,
\begin{equation*}
\begin{split}
\Re(\omega_{\bC})(JX,Y)+i \Im(\omega_{\bC})(JX,Y)&=\omega_{\bC}(J X,Y)=i\omega_{\bC}(X,Y)\\
&=i\Re(\omega_{\bC})(X,Y)-\Im(\omega_{\bC})(X,Y),
\end{split}
\end{equation*}
i.e.~$\Im(\omega_{\bC})=-\Re(\omega_{\bC})(J\cdot,\cdot)$.

To finish the proof, we need to show that $\omega(J\cdot,\cdot)$ is closed for any pair $(J,\omega)$ of a complex structure $J$ and a symplectic form $\omega$ such that $J$ is symmetric with respect to $\omega$. However, a short computation shows that
\begin{equation*}
d(\omega(J\cdot,\cdot))(X,Y,Z)=\frac{1}{2}\left(d\omega(JX,Y,Z)+d\omega(X,JY,Z)+d\omega(X,Y,JZ)\right)=0
\end{equation*}
for any $X,Y,Z\in \mathfrak{X}(M)$.
\end{proof}

Thanks to Lemma \ref{le:equivalencecomplexsymplectic}, we will from now on also call a pair $(J,\omega)$ on a manifold $M$
consisting of an integrable almost complex structure $J$ and a symplectic form $\omega$ such that $\omega$ is symmetric with respect to $J$ a \emph{complex symplectic structure} and $(M,J,\omega)$ a \emph{complex symplectic manifold}.

We are here mainly interested in left-invariant complex symplectic structures on a Lie group $G$. As left-invariant structures are fully 
determined by their value in the identity $e$ of $G$, i.e.~on the associated Lie algebra $\mfg=T_e G$, we may describe left-invariant complex symplectic structures on $G$ equivalently as \emph{complex symplectic structures} on $\mfg$. Then we say that $(\mfg,J,\omega)$, or $(\mfg,J,\omega_{\bC})$, is a \emph{complex symplectic Lie algebra}. Note that then $J$ is a complex structure on $\mfg$
and $\omega$ is a symplectic structure on $\mfg$ such that $J$ is symmetric with respect to $\omega$.

\begin{remark}\label{re:adaptedbasis}
Since a complex symplectic structure $(J,\omega)$ on a $4n$-dimensional manifold $M$ is an $\mathrm{Sp}(2n,\bC)$-structure, each point $p\in M$ admits an
$\mathrm{Sp}(2n,\bC)$-orbit of bases $\{e_1,\ldots,e_{4n}\}$ of $T_p M$ such that $J_p$ and $\omega_p$ are given by
\begin{equation*}
J_p\coloneqq\sum_{i=1}^{2n} e^{2i}\otimes e_{2i-1}-e^{2i-1}\otimes e_{2i},\quad \omega_p\coloneqq\sum_{j=1}^n e^{4j-3}\wedge e^{4j}+e^{4j-2}\wedge e^{4j-1}.
\end{equation*}
Such a basis is called an \emph{adapted basis} for $(J,\omega)$ at $p\in M$. Clearly, for a left-invariant complex symplectic structure $(J,\omega)$ on a $4n$-dimensional Lie group $G$, a global left-invariant frame $\{e_1,\ldots,e_{4n}\}$, i.e.~a basis of the associated Lie algebra $\mfg$, may give adapted bases at any point in $G$, and we then also call $\{e_1,\ldots,e_{4n}\}$ an \emph{adapted basis} for $(\mfg,J,\omega)$.

\end{remark}

\section{Complex symplectic reduction and oxidation}\label{oxidation-reduction}
Let $(\mfg,\omega)$ be a symplectic Lie algebra. An ideal $\mfa$ is 
$\omega$-isotropic if $\omega(X,Y)=0$ for every $X,Y\in\mfa$; in this case, $\mfa^{\perp}$ is a Lie subalgebra containing $\mfa$ and 
$\omega$ induces a symplectic form on the Lie algebra $\mfa^{\perp}/\mfa$, where here and in the following the orthogonal complement is 
taken with respect to $\omega$. This process is called \emph{symplectic reduction}, see e.g.~\cite{Baues-Cortes}. 

We can easily extend this construction to complex symplectic Lie algebras as follows:

\begin{lemma}[Complex symplectic reduction]\label{le:complexsymplecticreduction}
Let $(\mfg,J,\omega)$ be a complex symplectic Lie algebra and let $\mfa$ be an $\omega$-isotropic $J$-invariant ideal. Then $(J,\omega)$ induces 
a complex symplectic structure $(\bar J,\bar\omega)$ on $\bar{\mfg}:=\mfa^{\perp}/\mfa$ and $\mfa^{\perp}$
is a $J$-invariant subalgebra of $\mfg$. If $\mfa$ is central, then $\mfa^{\perp}$ contains the commutator ideal $[\mfg,\mfg]$
and so is, in particular, an ideal in $\mfg$.
\end{lemma}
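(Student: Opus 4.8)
The plan is to verify the three assertions of the lemma in turn, relying on the classical symplectic reduction (applied to $(\mfg,\omega)$) for the part that only involves $\omega$, and then tracking the extra structure~$J$. First I would recall why $\mfa^\perp$ is a subalgebra: if $X,Y\in\mfa^\perp$ and $Z\in\mfa$, then $\omega([X,Y],Z)$ can be rewritten using the cocycle condition $d\omega=0$, i.e.
\begin{equation*}
\omega([X,Y],Z)-\omega([X,Z],Y)+\omega([Y,Z],X)=0,
\end{equation*}
and since $\mfa$ is an ideal, $[X,Z],[Y,Z]\in\mfa$, so the last two terms vanish by $\omega$-isotropy of $\mfa$ and $X,Y\in\mfa^\perp$; hence $\omega([X,Y],Z)=0$ for all $Z\in\mfa$, i.e.~$[X,Y]\in\mfa^\perp$. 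That $\mfa^\perp$ is $J$-invariant is immediate from the $J$-invariance of $\mfa$ together with the symmetry $\omega(J\cdot,\cdot)=\omega(\cdot,J\cdot)$: if $X\in\mfa^\perp$ then for every $Z\in\mfa$ we have $\omega(JX,Z)=\omega(X,JZ)=0$ since $JZ\in\mfa$, so $JX\in\mfa^\perp$. Because $\mfa\subset\mfa^\perp$ (by isotropy) and $\mfa$ is an ideal, $\bar{\mfg}=\mfa^\perp/\mfa$ is a Lie algebra, $\bar\omega$ is the induced symplectic form from symplectic reduction, and $\bar J$ is the well-defined endomorphism of $\bar{\mfg}$ induced by $J|_{\mfa^\perp}$ (well-defined precisely because $J$ preserves both $\mfa^\perp$ and $\mfa$).

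Next I would check that $(\bar J,\bar\omega)$ is a complex symplectic structure on $\bar{\mfg}$, using Lemma~\ref{le:equivalencecomplexsymplectic}'s Lie-algebra analogue: it suffices to show $\bar J$ is integrable, $\bar\omega$ is a non-degenerate cocycle, and $\bar J$ is $\bar\omega$-symmetric. Non-degeneracy of $\bar\omega$ and the cocycle condition are exactly the content of ordinary symplectic reduction. The $\bar\omega$-symmetry of $\bar J$ is inherited pointwise from that of $J$ via the quotient map. For integrability of $\bar J$ I would argue that $\mfa^\perp$, being $J$-invariant, carries the restricted complex structure $J|_{\mfa^\perp}$, which is integrable because the Nijenhuis tensor $N_J$ vanishes on all of $\mfg$ and $\mfa^\perp$ is a subalgebra (so $N_{J|_{\mfa^\perp}}=N_J|_{\mfa^\perp}=0$); then $N_{\bar J}$ on $\bar{\mfg}$ is the image of $N_{J|_{\mfa^\perp}}$ under the Lie algebra projection $\mfa^\perp\to\bar{\mfg}$, hence vanishes. (Equivalently one can phrase this dually using $d(\mfg^{*(1,0)})\subset\Lambda^2\mfg^{*(1,0)}\oplus\mfg^{*(1,0)}\otimes\mfg^{*(0,1)}$ and noting that this condition descends to any $J$-invariant subalgebra and quotient thereof.)

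Finally, for the central case: suppose $\mfa\subset\mfz(\mfg)$. I want $[\mfg,\mfg]\subseteq\mfa^\perp$, i.e.~$\omega([X,Y],Z)=0$ for all $X,Y\in\mfg$ and all $Z\in\mfa$. Again invoking $d\omega=0$,
\begin{equation*}
\omega([X,Y],Z)=\omega([X,Z],Y)-\omega([Y,Z],X),
\end{equation*}
and since $Z\in\mfa$ is central, $[X,Z]=[Y,Z]=0$, so the right-hand side vanishes. Hence $[\mfg,\mfg]\subseteq\mfa^\perp$, and since $\mfa^\perp$ contains the commutator ideal it is automatically an ideal of $\mfg$ (any subspace containing $[\mfg,\mfg]$ is an ideal). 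I do not anticipate a genuine obstacle here; the only point requiring a little care is the well-definedness and integrability of $\bar J$ on the quotient, and the cleanest route is to first restrict to the subalgebra $\mfa^\perp$ (where integrability is clear from $\mfa^\perp$ being $J$-stable and a subalgebra) and then push down to $\mfa^\perp/\mfa$, at which stage all identities are just the images of identities already established upstairs.
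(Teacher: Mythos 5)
Your proposal is correct and follows the same route as the paper: classical symplectic reduction handles $\bar\omega$, the $\bar\omega$-symmetry of $J$ together with the $J$-invariance of $\mfa$ gives $J$-invariance of $\mfa^{\perp}$, integrability and symmetry descend to the quotient, and the cocycle identity $d\omega=0$ combined with centrality of $\mfa$ yields $[\mfg,\mfg]\subseteq\mfa^{\perp}$. The paper merely states these steps more tersely (citing Baues--Cort\'es for the purely symplectic part), whereas you spell out the Nijenhuis-tensor and $d\omega$ computations explicitly; all of your details check out.
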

\begin{proof}
We have an induced symplectic structure $\bar\omega$ on $\bar{\mfg}$ given by $\bar{\omega}(X+\mfa,Y+\mfa)\coloneqq\omega(X,Y)$ for all $X,Y\in\mfa^{\perp}$
by symplectic reduction and one easily checks that if $\mfa$ is central, then $\mfa^{\perp}$ is actually an ideal which contains the commutator ideal $[\mfg,\mfg]$, see also \cite{Baues-Cortes}.

Now the symmetry of $J$ with respect to $\omega$ and the $J$-invariance of $\mfa$ imply that $\mfa^{\perp}$ is $J$-invariant.
Thus, $J$ induces an integrable almost complex structure $\bar{J}$ on $\mfa^\perp/\mfa$ by $\bar{J}(X+\mfa)\coloneqq J(X)+\mfa$ for $X\in \mfa^{\perp}$, which is
symmetric with respect to $\bar{\omega}$.
\end{proof}

When $\mfa$ is a $J$-invariant \emph{central} $2$-dimensional ideal, there is an inverse construction to complex symplectic reduction. We call it \emph{complex symplectic oxidation}. It is analogous to the so-called \emph{symplectic oxidation} introduced in \cite{Baues-Cortes} as a converse to symplectic reduction with respect to a central one-dimensional ideal. First, a trivial observation:
\begin{claim}\label{cl:centralJinvariant2Disotropic}
Any $J$-invariant $2$-dimensional ideal $\mfa$ is $\omega$-isotropic.
\end{claim}
\begin{proof}
 Fix $X\in \mfa$ nonzero; then $\{X,JX\}$ is a basis of $\mfa$ and we have
 \[
  \omega(X,JX)=\omega(JX,X)=-\omega(X,JX) \ \Rightarrow \ \omega(X,JX)=0\,.
 \]
\end{proof}

We next describe in detail complex symplectic oxidation. Let $(\bar\mfg,\bar J,\bar\omega)$ be a complex symplectic Lie algebra  obtained by complex symplectic reduction from a complex symplectic Lie algebra $(\mfg,J,\omega)$ of dimension $4n+4$ using a central two-dimensional $J$-invariant ideal $\mfa$. Identifying $\bar\mfg$ with a $J$-invariant subspace of $\mfa^{\perp}$ complementary to $\mfa$, we have
%
%
%
\[
\bar\omega=\omega|_{\bar\mfg\times\bar\mfg} \quad \textrm{and} \quad \bar J=J|_{\bar\mfg}\,. 
\]

If we also choose a $J$-invariant complement $V$ of $\mfa^{\perp}$ in $\mfg$ which is $\omega$-orthogonal to $\bar{\mfg}$, then the map
\[
\mfa\to V^*, \quad w\mapsto \omega(w,\cdot)|_V
\]
is a $J$-equivariant isomorphism and we may use it to identify $\mfa$ with $V^*$. So to invert complex symplectic reduction with respect to a 2-dimensional central $J$-invariant ideal we need to start with a $4n$-dimensional complex symplectic Lie algebra $(\bar\mfg,\bar J,\bar\omega)$ and a 2-dimensional real vector space $V$ endowed with an almost complex structure $I$ and define the vector space $\mfg:=V\oplus \bar\mfg\oplus V^*$. Under the above identification,
\begin{equation}\label{eq:omegaong}
\omega((v,X,\alpha),(w,Y,\gamma))=\bar{\omega}(X,Y)+\alpha(w)-\gamma(v)
\end{equation}
and
\begin{equation}\label{eq:Jong}
  J(v,X,\alpha):=(Iv,\bar JX,I^*\alpha)
\end{equation}
for all $(v,X,\alpha),(w,Y,\gamma)\in\mfg$. Note that then $J$ is symmetric with respect to $\omega$.

It remains to define a Lie bracket on $\mfg=V\oplus\bar{\mfg}\oplus V^*$. Thereto, note that $V^*=\mfa$ has to be central and so $[\mfg,\mfg]$ has to be in $\bar{\mfg}\oplus V^*=\mfa^{\perp}$
by Lemma \ref{le:complexsymplecticreduction}. Hence, the non-zero (up to skew-symmetry) parts of the Lie bracket $[\cdot,\cdot]$ on $\mfg$ are given by
\begin{eqnarray}
 \left[(v,0,0),(w,0,0)\right] &=&(0,\nu(v,w),\tau(v,w)), \label{eq:Liebracketong1}\\
 \left[(v,0,0),(0,X,0)\right] &=&(0,f(v,X),g(v,X)), \label{eq:Liebracketong2}\\
 \left[(0,X,0),(0,Y,0)\right] &=&(0,[X,Y]_{\bar{\mfg}},\beta(X,Y)), \label{eq:Liebracketong3}
\end{eqnarray}
for $v,w\in V$ and $X,Y\in \bar{\mfg}$, where 
\begin{itemize}
 \item $\nu\colon\Lambda^2 V\to \bar{\mfg}$ $(\Leftrightarrow \nu\in\Lambda^2 V^*\otimes\bar\mfg)$,
 \item $\tau\colon\Lambda^2 V\to V^*$ $(\Leftrightarrow \tau\in\Lambda^2 V^*\otimes V^*)$,
 \item $f\colon V\otimes \bar\mfg\to\bar{\mfg}$ $(\Leftrightarrow f\in V^*\otimes\bar\mfg^*\otimes\bar\mfg\cong V^*\otimes\mathrm{End}(\bar\mfg))$,
 \item $g\colon V\otimes \bar{\mfg}\to V^*$ $(\Leftrightarrow g\in V^*\otimes\bar\mfg^*\otimes V^*)$,
 \item $\beta\colon \Lambda^2 \bar{\mfg}\to V^*$ $(\Leftrightarrow \beta\in \Lambda^2\bar\mfg^*\otimes V^*)$,
\end{itemize}
and $[\cdot,\cdot]_{\bar{\mfg}}$ is the Lie bracket of $\bar{\mfg}$. Altogether, the Lie bracket on $\mfg$ is
\begin{align*}
 \left[(v,X,\alpha),(w,Y,\gamma)\right]=&(0,\nu(v,w)+f(v,Y)-f(w,X)+[X,Y]_{\bar{\mfg}},\\
 &g(v,Y)-g(w,X)+\tau(v,w)+\beta(X,Y))\,.
\end{align*}
We need to reformulate the following in terms of conditions on $\nu$, $\tau$, $f$, $g$ and $\beta$:
\begin{description}
 \item[1] $[\cdot,\cdot]$ satisfies the Jacobi identity,
 \item[2] $\omega$ is closed and
 \item[3] $J$ is integrable
\end{description}
This will be done in the next three lemmas.\\


We introduce notation needed in the first lemma:
\begin{itemize}
 \item For $\xi,\eta\in V^*\otimes \mathrm{End}(\bar\mfg)$, we denote by $\{\xi,\eta\}\in \Lambda^2 V^*\otimes\mathrm{End}(\bar{\mfg})$ their bracket as endomorphisms of $\bar\mfg$, namely
\[
 \{\xi,\eta\}(v,w)=\xi(v)\circ\eta(w)-\xi(w)\circ\eta(v)\,.
\]
\item Given $\rho\in\Lambda^2\bar\mfg^*\otimes V^*$ and $D\in\mathrm{Der}(\bar{\mfg})$, define
$D.\rho\in\Lambda^2\bar\mfg^*\otimes V^*$ by
\[
 (D.\rho)(X,Y)=\rho(DX,Y)+\rho(X,DY)\,.
\]
\item Given $f$ and $g$ as above, we define $g\circ f\colon V\otimes V\otimes\bar\mfg\to V^*$ by
\begin{equation*}
 (g\circ f)(v,w,X)=g(v,f(w,X))\,.
\end{equation*}
\item Let $U$, $W$ and $Z$ be vector spaces; given a tensor $T\colon U\otimes U\otimes W\to Z$, we denote by $\Alt(T)\colon\Lambda^2 U\otimes W\to Z$ its skew-symmetrization in the first two variables, i.e.
\[
 \Alt(T)(u,v,w)=\frac{1}{2}\left(T(u,v,w)-T(v,u,w)\right)\,.
 \]
\end{itemize}

\begin{lemma}\label{lemma-jacobi}
The anti-symmetric product $[\cdot,\cdot]$ defined by \eqref{eq:Liebracketong1}, \eqref{eq:Liebracketong2} and \eqref{eq:Liebracketong3} satisfies the Jacobi identity if and only if the following conditions hold:
\begin{enumerate}
\item[(i)] $f\in V^*\otimes \mathrm{Der}(\bar{\mfg})$,
\item[(ii)] $\mathrm{ad}_\nu=\{f,f\}$,
\item[(iii)] $d_{\bar\mfg}g=-f.\beta$,
\item[(iv)] $\nu\hook\beta=2\Alt(g\circ f)$,
\item[(v)] $d_{\bar\mfg}\beta=0$.
\end{enumerate}
\end{lemma}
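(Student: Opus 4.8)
The plan is to test the Jacobi identity by evaluating the Jacobiator
$\sJ(a,b,c):=[[a,b],c]+[[b,c],a]+[[c,a],b]$
on triples $a,b,c$ drawn from a basis of $\mfg=V\oplus\bar\mfg\oplus V^*$ adapted to this direct‑sum decomposition, and then reading each resulting identity as a condition on the data $(\nu,\tau,f,g,\beta)$. Two reductions come first. Since $V^*=\mfa$ is central and every bracket in \eqref{eq:Liebracketong1}--\eqref{eq:Liebracketong3} takes values in $\bar\mfg\oplus V^*$, the Jacobiator vanishes identically whenever one of the three arguments lies in $V^*$; thus only triples from $V\cup\bar\mfg$ need to be considered. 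Moreover $\dim V=2$, so any three vectors of $V$ are linearly dependent and $\sJ$ vanishes on $V^3$ as well, by multilinearity and skew‑symmetry. What remains are the three mixed cases $\bar\mfg^3$, $V\times\bar\mfg^2$ and $V^2\times\bar\mfg$, and in each of them $\sJ(a,b,c)$ has only a $\bar\mfg$-component and a $V^*$-component (the $V$-component being automatically zero).

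I would then run the three computations, in each case using \eqref{eq:Liebracketong1}--\eqref{eq:Liebracketong3} and repeatedly discarding the bracket terms that land in the central $V^*$. For $X,Y,Z\in\bar\mfg$ the $\bar\mfg$-component is the Jacobiator of $[\cdot,\cdot]_{\bar\mfg}$, which vanishes because $\bar\mfg$ is a Lie algebra, while the $V^*$-component equals $\beta([X,Y]_{\bar\mfg},Z)+\beta([Y,Z]_{\bar\mfg},X)+\beta([Z,X]_{\bar\mfg},Y)=-(d_{\bar\mfg}\beta)(X,Y,Z)$, yielding (v). For $v\in V$ and $X,Y\in\bar\mfg$ the $\bar\mfg$-component is $[f(v)X,Y]_{\bar\mfg}+[X,f(v)Y]_{\bar\mfg}-f(v)[X,Y]_{\bar\mfg}$, whose vanishing for all $X,Y$ says precisely $f(v)\in\Der(\bar\mfg)$, i.e.\ (i), and the $V^*$-component rearranges to $(f(v).\beta)(X,Y)+(d_{\bar\mfg}g(v,\cdot))(X,Y)$, yielding (iii). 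For $v,w\in V$ and $X\in\bar\mfg$ the $\bar\mfg$-component is $\ad_{\nu(v,w)}X-\{f,f\}(v,w)X$, giving (ii), and the $V^*$-component is $\beta(\nu(v,w),X)-2\Alt(g\circ f)(v,w,X)=(\nu\hook\beta)(v,w,X)-2\Alt(g\circ f)(v,w,X)$, giving (iv). Running the same identities in reverse shows conversely that (i)--(v) force every Jacobiator to vanish.

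This is really organized bookkeeping rather than a computation with a conceptual bottleneck, so I do not expect a serious obstacle; the points that demand care are (a) correctly tracking which summands of each nested bracket are killed by the centrality of $V^*$, (b) matching the raw identities coming out of $\sJ$ against the Chevalley--Eilenberg sign conventions of \eqref{CE_diff} and the operators $\{\cdot,\cdot\}$, $D.\rho$, $g\circ f$, $\hook$ and $\Alt$ fixed before the statement, so that the signs in (iii)--(v) appear exactly as written, and (c) the (easily missed) use of $\dim V=2$ to dispose of the $V^3$ case — without it one would additionally be forced to impose $\sum_{\mathrm{cyc}}f(w,\nu(u,v))=0$ and $\sum_{\mathrm{cyc}}g(w,\nu(u,v))=0$, which are not part of the list.
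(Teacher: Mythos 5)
Your proposal is correct and follows essentially the same route as the paper: both reduce to the three mixed cases ($V^2\times\bar\mfg$, $V\times\bar\mfg^2$, $\bar\mfg^3$) using the centrality of $V^*$ and the fact that the Jacobiator is alternating so vanishes on $\Lambda^3$ of the two-dimensional $V$, and then read off conditions (i)--(v) componentwise exactly as you describe. The raw identities you obtain in each case match equations \eqref{eq:condition1}--\eqref{eq:condition4b} of the paper's proof.
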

\begin{proof}
 Since $V^*$ is central in $\mfg$ and $V$ is two-dimensional, it is enough to check the Jacobi identity in three situations:
\begin{description}
 \item[A] $(v,0,0)$, $(w,0,0)$, $(0,X,0)$,
 \item[B] $(v,0,0)$, $(0,X,0)$, $(0,Y,0)$ and
 \item[C] $(0,X,0)$, $(0,Y,0)$, $(0,Z,0)$.
\end{description}
For \textbf{A}, writing down the Jacobi identity in each summand one gets
\begin{eqnarray}
 [\nu(v,w),X]_{\bar{\mfg}} &=& f(v,f(w,X))-f(w,f(v,X))\label{eq:condition1}\\
 \beta(\nu(v,w),X) &=&g(v,f(w,X))-g(w,f(v,X))\label{eq:condition2}\,;
\end{eqnarray}
doing the same for \textbf{B} one arrives to
\begin{eqnarray}
 f(v,[X,Y]_{\bar{\mfg}}) &=& [f(v,X),Y]_{\bar{\mfg}}+[X,f(v,Y)]_{\bar{\mfg}}\label{eq:condition3}\\
 g(v,[X,Y]_{\bar{\mfg}}) &=& \beta(f(v,X),Y)+\beta(X,f(v,Y))\label{eq:condition4}\,.
\end{eqnarray}
Using the notation above, condition \eqref{eq:condition1} is simply 
\begin{equation*}
\mathrm{ad}_\nu=\{f,f\} 
\end{equation*}
in $\Lambda^2 V^*\otimes\mathrm{End}(\bar{\mfg})$, which is part~\textrm{(ii)} of the statement above, while~\eqref{eq:condition3} gives
part~\textrm{(i)}, namely, 
\begin{equation*}
f\in V^*\otimes \mathrm{Der}(\bar{\mfg})\,.
\end{equation*}

Moreover, \eqref{eq:condition4} reads 
\begin{equation*}
 d_{\bar\mfg}g=-f.\beta
\end{equation*}
in $V^*\otimes \Lambda^2\bar\mfg^*\otimes V^*$, i.e.~ $d_{\bar\mfg}g(v)=-f(v).\beta$ for every $v\in V$. This is part \textrm{(iii)} of the lemma.

Using our notation, \eqref{eq:condition2} becomes 
 \begin{equation*}
 \nu\hook\beta=2\Alt(g\circ f)
\end{equation*}
in $\Lambda^2 V^*\otimes\bar\mfg^*\otimes V^*$, which coincides with \textrm{(iv)} in the statement.

Finally, we deal with \textbf{C} above; \eqref{eq:Liebracketong3} shows that the Jacobi identity on vectors $(0,X,0)$, $(0,Y,0)$ and $(0,Z,0)$ is equivalent to
\begin{eqnarray}
 [[X,Y]_{\bar{\mfg}},Z]_{\bar{\mfg}}+[[Y,Z]_{\bar{\mfg}},X]_{\bar{\mfg}}+[[Z,X]_{\bar{\mfg}},Y]_{\bar{\mfg}} &=& 0\nonumber\\
 \beta([X,Y]_{\bar{\mfg}},Z)+\beta([Y,Z]_{\bar{\mfg}},X)+\beta([Z,X]_{\bar{\mfg}},Y) &=&0\label{eq:condition4b}\,;
\end{eqnarray}
The first equation is just the Jacobi identity on $\bar\mfg$. For the second one, note that $\beta\in\Lambda^2 \bar\mfg^*\otimes V^*$ and \eqref{eq:condition4b} is simply $d_{\bar\mfg}\beta=0$. This gives part \textrm{(v)} of the lemma and concludes the proof.
\end{proof}
 
 
 For the second lemma, we need the following convention: since $V^*\otimes\bar\mfg^*\otimes V^*$ and $V^*\otimes V^*\otimes\bar\mfg^*$ are canonically isomorphic we can think of $g$ as an element of the latter vector space;  as such, we can decompose it in its symmetric and skew-symmetric part, $g=S+A\in S^2V^*\otimes \bar\mfg^*\oplus\Lambda^2 V^*\otimes \bar\mfg^*$; according to the notation introduced before Lemma \ref{lemma-jacobi}, $A=\Alt(g)$.
 
 
\begin{lemma}\label{lemma-omega}
Assume that $[\cdot,\cdot]$ defined by \eqref{eq:Liebracketong1}, \eqref{eq:Liebracketong2} and \eqref{eq:Liebracketong3} is a Lie bracket on $\mfg$.
Then the $2$-form $\omega$ defined by \eqref{eq:omegaong} is closed if and only if
$$\nu\hook\bar\omega=2A \qquad \text{and} \qquad \beta=-f.\bar\omega.$$
\end{lemma}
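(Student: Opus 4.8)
The plan is to compute $d\omega$ explicitly on triples of elements of $\mfg = V \oplus \bar{\mfg} \oplus V^*$ using the Chevalley–Eilenberg formula $d\omega(A,B,C) = -\omega([A,B],C) + \omega([A,C],B) - \omega([B,C],A)$, and to read off which tensor components must vanish. Since $V$ is $2$-dimensional and $V^* = \mfa$ is central, the only triples that can produce a nonzero value are (up to permutation and the obvious bilinearity in the $V^*$-slots, which contribute nothing to $\omega$ beyond the pairing with $V$) the following three types: (A) two elements of $V$ and one of $\bar{\mfg}$; (B) one element of $V$ and two of $\bar{\mfg}$; (C) three elements of $\bar{\mfg}$. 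A triple with two or three $V^*$-entries gives $0$ since $V^*$ is central and $\omega$ pairs $V^*$ only with $V$; a triple with one $V^*$-entry reduces, after using \eqref{eq:omegaong} and centrality, to the bracket-terms already covered by the other cases. Case (C) just reproduces the statement that $\bar\omega$ is closed on $\bar{\mfg}$, which holds by hypothesis, so it imposes no new condition.

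For case (A), I would take $A = (v,0,0)$, $B = (w,0,0)$, $C = (0,X,0)$. Using \eqref{eq:Liebracketong1}–\eqref{eq:Liebracketong3} and \eqref{eq:omegaong}, the three terms of $d\omega(A,B,C)$ are $-\omega\big((0,\nu(v,w),\tau(v,w)),(0,X,0)\big)$, $+\omega\big((0,f(v,X),g(v,X)),(w,0,0)\big)$, and $-\omega\big((0,f(w,X),g(w,X)),(v,0,0)\big)$; invoking \eqref{eq:omegaong} these equal $-\bar\omega(\nu(v,w),X)$, $-g(v,X)(w)$ (from the $-\gamma(v)$ term with roles swapped, giving $+\alpha(w)$ where $\alpha = g(v,X)$), and $+g(w,X)(v)$ respectively. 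Hence $d\omega = 0$ on such triples is equivalent to $\bar\omega(\nu(v,w),X) = g(w,X)(v) - g(v,X)(w) = 2\,\Alt(g)(v,w,X) = 2A(v,w,X)$, i.e. $\nu\hook\bar\omega = 2A$, using the convention fixed before the lemma that $g$ is regarded in $V^*\otimes V^*\otimes\bar\mfg^*$ and $A = \Alt(g)$. (I should be careful with the signs coming from the order of the slots in \eqref{eq:omegaong} and from the identification $\mfa \cong V^*$, but these are bookkeeping.)

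For case (B), take $A = (v,0,0)$, $B = (0,X,0)$, $C = (0,Y,0)$. The three terms are $-\omega\big((0,f(v,X),g(v,X)),(0,Y,0)\big)$, $+\omega\big((0,f(v,Y),g(v,Y)),(0,X,0)\big)$, and $-\omega\big((0,[X,Y]_{\bar\mfg},\beta(X,Y)),(v,0,0)\big)$, which by \eqref{eq:omegaong} equal $-\bar\omega(f(v,X),Y)$, $+\bar\omega(f(v,Y),X) = -\bar\omega(X,f(v,Y))$ after reordering, and $+\beta(X,Y)(v)$. So vanishing of $d\omega$ here is equivalent to $\beta(X,Y)(v) = \bar\omega(f(v,X),Y) + \bar\omega(X,f(v,Y)) = (f(v).\bar\omega)(X,Y)$ with the opposite overall sign once the signs are tracked, giving $\beta = -f.\bar\omega$ in $V^* \otimes \Lambda^2\bar\mfg^*$; note this is consistent with $\beta$ taking values in $V^*$ since the left side is evaluated against $v \in V$. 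Finally I would remark that conversely, if the two displayed identities hold, then $d\omega$ vanishes on all three families of triples, hence on all of $\Lambda^3\mfg$, so $\omega$ is closed. The main obstacle is purely a matter of sign- and slot-bookkeeping: getting the factors of $2$ and the signs in \eqref{eq:omegaong} and in the definition of $\Alt$ and $f.\bar\omega$ to line up correctly; there is no conceptual difficulty beyond a disciplined expansion of the Chevalley–Eilenberg differential.
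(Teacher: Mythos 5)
Your proposal follows essentially the same route as the paper: observe that triples meeting $V^*$ contribute nothing (centrality plus $[\mfg,\mfg]\subseteq(V^*)^\perp$), that triples in $\bar\mfg$ reproduce $d_{\bar\mfg}\bar\omega=0$, and that the two remaining cases $(v,w,X)$ and $(v,X,Y)$ yield exactly $\nu\hook\bar\omega=2A$ and $\beta=-f.\bar\omega$. The intermediate signs in your expansion are not all consistent (e.g.\ the sign of the $g(v,X)(w)$ term and of $\beta(X,Y)(v)$), but you flag this as bookkeeping and your final identities agree with the paper's, so the argument is correct as a plan.
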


\begin{proof}
 First of all notice that $V^*\hook d\omega=0$ since $V^*$ is central and
$[\mfg,\mfg]\subseteq \bar{\mfg}\oplus V^*=(V^*)^{\perp}$. Moreover, one has $d\omega|_{\Lambda^3 \bar{\mfg}}=d_{\bar{\mfg}} \bar{\omega}=0$. Hence, we have to deal with two cases:
\begin{description}
 \item[D] $(v,0,0)$, $(w,0,0)$, $(0,X,0)$ and
 \item[E] $(v,0,0)$, $(0,X,0)$, $(0,Y,0)$.
\end{description}
For \textbf{D}, applying \eqref{eq:omegaong}, one gets
\begin{equation}\label{eq:condition5}
 \bar\omega(\nu(v,w),X)=g(v,X)(w)-g(w,X)(v)\,,
\end{equation}
while for \textbf{E} one computes
\begin{eqnarray}\label{eq:condition6}
 \beta(X,Y)(v)=-\bar\omega(f(v,X),Y)-\bar\omega(X,f(v,Y))\,.
\end{eqnarray}
Equation \eqref{eq:condition5} is equivalent to $\nu\hook\bar\omega=2\Alt(g)$ in $\Lambda^2 V^*\otimes \bar\mfg^*$, which we rewrite, according to our convention, as 
\begin{equation*}
\nu\hook\bar\omega=2A\,;
\end{equation*}
this is the first condition in the lemma. Viewing $f$ as an element of $V^*\otimes\Der(\bar\mfg)$, \eqref{eq:condition6} is nothing but 
\begin{equation*}
 \beta=-f.\bar\omega
\end{equation*}
in $\Lambda^2\bar\mfg^*\otimes V^*$, which is precisely the second condition in the statement.
\end{proof}


We introduce some conventions and notation before stating the last lemma.
\begin{itemize}
 \item Given $T\in V^*\otimes V^*\otimes W$, we define the \emph{trace} of $T$, $\tr(T)\in\Lambda^2 V^*\otimes W$, by using the canonical isomorphisms 
\[
V^*\otimes V^*\cong V^*\otimes V\otimes \Lambda^2 V^*\cong \End(V)\otimes\Lambda^2 V^*\,.
\]
\item Given $T\in V^*\otimes V^*\otimes \bar\mfg ^*$, define $T_I\in V^*\otimes V^*\otimes\bar\mfg ^*$ by $T_I(v,w)=T(Iv,w)$.
\item The symplectic form $\bar\omega$ gives an isomorphism $^\sharp\colon\bar\mfg^*\to\bar\mfg$ which can be extended to a map $^\sharp\colon V^*\otimes V^*\otimes\bar\mfg^*\to V^*\otimes V^*\otimes \bar\mfg$ 
\end{itemize}

\begin{lemma}\label{lemma-J}
Assume that $[\cdot,\cdot]$ defined by \eqref{eq:Liebracketong1}, \eqref{eq:Liebracketong2} and \eqref{eq:Liebracketong3} is a Lie bracket on $\mfg$
and that the $2$-form $\omega$ defined by \eqref{eq:omegaong} is closed. Then the almost complex structure $J$ defined by \eqref{eq:Jong} is integrable if and only if
\[
I^*f-\bar J\circ f\in V^*\otimes\mathfrak{sp}(\bar\mfg,\bar J,\bar\omega) \quad \text{and} \quad \nu=\bar J(\tr(S_I))^\sharp\,. 
\]
\end{lemma}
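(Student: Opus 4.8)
The plan is to compute the Nijenhuis tensor $N_J$ on $\mfg = V \oplus \bar\mfg \oplus V^*$ using the bracket formulas \eqref{eq:Liebracketong1}--\eqref{eq:Liebracketong3} and the formula \eqref{eq:Jong} for $J$, and to reduce its vanishing to the two stated conditions. Since $V^* = \mfa$ is central and $J$-invariant, and $\bar J$ is already integrable on $\bar\mfg$, the tensor $N_J$ vanishes automatically whenever all arguments lie in $\bar\mfg$ (this is $N_{\bar J} = 0$) and whenever any argument lies in $V^*$ (every bracket involving $V^*$ is zero, and $J$ preserves $V^*$). As $V$ is two-dimensional and $N_J$ is skew, the only cases left to analyze are $N_J((v,0,0),(0,X,0))$ for $v \in V$, $X \in \bar\mfg$, and $N_J((v,0,0),(w,0,0))$ where $v,w$ span $V$.

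First I would expand $N_J\bigl((v,0,0),(0,X,0)\bigr)$. Using \eqref{eq:Liebracketong2} the relevant brackets are $[(v,0,0),(0,X,0)] = (0,f(v,X),g(v,X))$, $[(Iv,0,0),(0,X,0)] = (0,f(Iv,X),g(Iv,X))$, $[(v,0,0),(0,\bar J X,0)] = (0,f(v,\bar J X),g(v,\bar J X))$, and $[(Iv,0,0),(0,\bar J X,0)] = (0,f(Iv,\bar J X),g(Iv,\bar J X))$. Applying $J$ via \eqref{eq:Jong} and collecting, the $\bar\mfg$-component of $N_J$ becomes
\begin{equation*}
\bar J\bigl(f(Iv,X) + f(v,\bar J X)\bigr) - f(v,X) + f(Iv,\bar J X),
\end{equation*}
which, writing $f_v := f(v,\cdot) \in \End(\bar\mfg)$, is $\bar J f_{Iv} \bar J^{-1}\,\bar J X + \cdots$; a short rearrangement shows it equals $\bigl((I^*f)_v - \bar J f_v\bigr)$ composed with terms that vanish precisely when $I^*f - \bar J \circ f$ takes values in endomorphisms commuting with $\bar J$. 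The $V^*$-component produces an analogous identity for $g$; but here one should use Lemma \ref{lemma-omega} ($\beta = -f.\bar\omega$) together with Lemma \ref{lemma-jacobi}(iii) to re-express $g$, so that the $V^*$-part of this Nijenhuis condition is already implied — the genuinely new content is that $I^*f - \bar J \circ f$ lands in $\mathfrak{sp}(\bar\mfg,\bar J,\bar\omega)$, i.e. the endomorphisms that are both $\bar J$-commuting and $\bar\omega$-skew, the $\bar\omega$-skewness coming for free once $\omega$ is closed.

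Next I would expand $N_J\bigl((v,0,0),(w,0,0)\bigr)$ for $v,w$ a basis of $V$. By \eqref{eq:Liebracketong1} each bracket $[(v,0,0),(w,0,0)]$ equals $(0,\nu(v,w),\tau(v,w))$, and since $V$ is $2$-dimensional, replacing an argument by its $I$-image only rescales it within the line it spans, so the four brackets in $N_J$ are all proportional to $(0,\nu(v,w),\tau(v,w))$. Collecting the coefficients and applying $J = \bar J$ on the $\bar\mfg$-slot, the $\bar\mfg$-component of $N_J\bigl((v,0,0),(w,0,0)\bigr)$ reduces to an expression of the form $\bar J\nu(\text{stuff}) - \nu(\text{stuff})$ that, after using $I^2 = -\id$ on $V$, pins down $\nu$ in terms of its ``trace part''; the $V^*$-component does the same for $\tau$. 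Feeding in the closedness relation $\nu \hook \bar\omega = 2A$ from Lemma \ref{lemma-omega} to convert the skew part $A$ of $g$ into $\nu$, and recalling that $S$ is the symmetric part of $g$, this collapses to the single equation $\nu = \bar J\,(\tr(S_I))^\sharp$, with the remaining identity on $\tau$ being automatically satisfied given the earlier conditions.

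The main obstacle I anticipate is bookkeeping: correctly tracking how $I$ acts on the $2$-dimensional space $V$ (where $\{v, Iv\}$ is a basis but $I(Iv) = -v$, so cross-terms recombine in nonobvious ways), and disentangling which part of each Nijenhuis equation is genuinely new versus already forced by Lemmas \ref{lemma-jacobi} and \ref{lemma-omega}. In particular, showing that the $V^*$-components of $N_J$ impose no condition beyond $\nu = \bar J(\tr(S_I))^\sharp$ and $I^*f - \bar J \circ f \in V^*\otimes\mathfrak{sp}(\bar\mfg,\bar J,\bar\omega)$ requires carefully substituting the formulas for $\beta$ and $g$ from the previous two lemmas and checking a cancellation; this is where I would be most careful to avoid sign errors. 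The identification of the ``trace'' operation $\tr$ and the musical isomorphism $^\sharp$ with the concrete index manipulations is routine but must be done consistently with the conventions fixed just before the lemma.
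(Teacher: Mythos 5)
Your overall strategy (compute $N_J$ case by case on $\mfg=V\oplus\bar\mfg\oplus V^*$) is the right one, but your case decomposition contains a genuine error that loses one of the two stated conditions and misattributes the other. First, the case of two arguments in $\bar\mfg$ is \emph{not} vacuous: by \eqref{eq:Liebracketong3} the bracket $[(0,X,0),(0,Y,0)]=(0,[X,Y]_{\bar\mfg},\beta(X,Y))$ has a $V^*$-component, so while the $\bar\mfg$-component of $N_J\bigl((0,X,0),(0,Y,0)\bigr)$ vanishes because $N_{\bar J}=0$, its $V^*$-component gives the condition
\[
\beta(X,Y)+I^*\beta(\bar JX,Y)+I^*\beta(X,\bar JY)-\beta(\bar JX,\bar JY)=0,
\]
which, after substituting $\beta=-f.\bar\omega$ from Lemma \ref{lemma-omega} and using the $\bar J$-commutation already obtained, is exactly the statement $(I^*f-\bar J\circ f).\bar\omega=0$. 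This is the $\bar\omega$-skewness half of $I^*f-\bar J\circ f\in V^*\otimes\mathfrak{sp}(\bar\mfg,\bar J,\bar\omega)$; it does \emph{not} ``come for free once $\omega$ is closed,'' and omitting this case leaves you with only the weaker conclusion that $I^*f-\bar J\circ f$ commutes with $\bar J$, so the ``if and only if'' fails.

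Second, the case of two arguments in $V$, which you analyze at length and credit with producing $\nu=\bar J(\tr(S_I))^\sharp$, is actually vacuous: since $V$ is $2$-dimensional and $J$-invariant, any $w\in V$ is $av+bIv$, and bilinearity plus the universal identity $N_J(\tilde X,J\tilde X)=0$ give $N_J\bigl((v,0,0),(w,0,0)\bigr)=b\,N_J\bigl((v,0,0),(Jv,0,0)\bigr)=0$ identically, imposing no constraint on $\nu$ or $\tau$. The condition $\nu=\bar J(\tr(S_I))^\sharp$ in fact comes from the $V^*$-component of $N_J\bigl((v,0,0),(0,X,0)\bigr)$ — the equation $g(v,X)+I^*\bigl(g(Iv,X)+g(v,\bar JX)\bigr)-g(Iv,\bar JX)=0$ — which you dismiss as ``already implied'' by Lemmas \ref{lemma-jacobi} and \ref{lemma-omega}. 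It is not: writing $g=S+A$ and converting $A$ into $\nu$ via $\nu\hook\bar\omega=2A$, this equation is precisely equivalent to $\tr(S_I)(v,Iv)=-\bar\omega(\bar J\nu(v,Iv),\cdot)$, i.e.\ to the second condition of the lemma. So both halves of your argument need to be reassigned: the correct nontrivial cases are $V\times\bar\mfg$ (both components carry content) and $\bar\mfg\times\bar\mfg$ (the $V^*$-component carries content), while $V\times V$ contributes nothing.
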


\begin{proof}
We only have to check the integrability for elements of $V\oplus\bar\mfg$ since $V^*$ is central and $J$-invariant.
Moreover, we always have $N_J(\tilde{X},J\tilde{X})=0$ for any $\tilde{X}\in \mfg$. Hence, since $V$
is $J$-invariant and two-dimensional, we are left with the following two cases:
\begin{description}
 \item[F] $(v,0,0)$, $(0,X,0)$ and
 \item[G] $(0,X,0)$, $(0,Y,0)$.
\end{description}

Applying formul\ae \ \eqref{eq:Jong} and \eqref{eq:Liebracketong2} to \textbf{F}, we get
\begin{eqnarray}
 f(v,X)+\bar J f(Iv,X)+\bar J f(v,\bar J X)-f(Iv,\bar J X) &=& 0\label{eq:condition7}\, ,\\
 g(v,X)+I^*\big(g(Iv,X)+g(v,\bar J X)\big)-g(Iv,\bar J X) &=& 0\label{eq:condition8}\,;
\end{eqnarray}
Using Lemma~\ref{lemma-jacobi}, we consider $f$ as an element of $V^*\otimes\Der(\bar\mfg)$ and define:
\begin{itemize}
 \item $I^*f\in V^*\otimes\Der(\bar\mfg)$ by $(I^*f)(v,X)=f(Iv,X)$;
 \item $\bar J\circ f\in V^*\otimes\End(\bar\mfg)$ by $(\bar J\circ f)(v,X)=\bar Jf(v,X)$.
\end{itemize}
One sees immediately that \eqref{eq:condition7} is equivalent to 
\begin{equation}\label{eq:condition7a}
 \left\{I^*f-\bar J\circ f,\bar J\right\}=0\in V^*\otimes\End(\bar\mfg),
\end{equation}
i.e. to $I^*f-\bar J\circ f\in V^*\otimes\End(\bar\mfg,\bar J)$. As for the second equation, given $v\in V$, note that $\{ v,Iv\}$ is a basis of $V$.
Evaluating \eqref{eq:condition8} on $v$ and $Iv$ we get
\begin{eqnarray}
 g(v,X,v)+g(Iv,X,Iv)+g(v,\bar J X,Iv)-g(Iv,\bar J X,v) &=& 0\label{eq:condition8a}\\
 g(v,X,Iv)-g(Iv,X,v)-g(v,\bar J X,v)-g(Iv,\bar J X,Iv) &=& 0\label{eq:condition8b}\,;
\end{eqnarray}
by plugging $\bar J X$ instead of $X$ in \eqref{eq:condition8b}, we recover \eqref{eq:condition8a}, thus both conditions are equivalent. Writing $g=S+A$ as before, \eqref{eq:condition8a} amounts to
\begin{align}
 0&=\big(S(v,v)+S(Iv,Iv)\big)X+\big(g(v,Iv)-g(Iv,v)\big)\bar J X\nonumber\\
 &=\big(S(v,v)+S(Iv,Iv)\big)X+2A(v,Iv)\bar J X\nonumber\\
 &=\big(S(v,v)+S(Iv,Iv)\big)X+\bar\omega\big(\nu(v,Iv),\bar J X\big)\nonumber\\
 &=\big(S(v,v)+S(Iv,Iv)\big)X+\bar\omega\big(\bar J\nu(v,Iv),X\big)\label{eq:condition8c}\,.
\end{align}
According to our conventions and using~\eqref{eq:condition8c}, one checks that
\[
\tr(S_I)(v,Iv)=S(v,v)+S(Iv,Iv)=-\bar\omega(\bar J\nu(v,Iv),\cdot)\,.
\]
Using the isomorphism given by the symplectic form, we can rewrite the last equation as
\begin{equation*}
\nu=\bar J(\tr(S_I))^\sharp\,.
\end{equation*}
This ensures \eqref{eq:condition8c} and gives the second condition in our statement.

For \textbf{G}, \eqref{eq:Jong} gives two conditions; the first one is $N_{\bar J}(X,Y)=0$, which is satisfied, since $\bar J$ is a complex structure on $\bar\mfg$. The second one reads
\begin{equation*}
 \beta(X,Y)+I^*\beta(\bar J X,Y)+I^*\beta(X,\bar J Y)-\beta(\bar J X,\bar J Y)=0\,;
\end{equation*}
in view of the second expression in Lemma~\ref{lemma-omega}, this gives
\begin{align*}
 0&=(f.\bar\omega)(X,Y)+(f.\bar\omega)(\bar J X,Y)\circ I+(f.\bar\omega)(X,\bar J Y)\circ I-(f.\bar\omega)(\bar J X,\bar J Y)\\
 &=(f.\bar\omega)(X,Y)+\big((I^*f).\bar\omega\big)(\bar J X,Y)+\big((I^*f).\bar\omega\big)(X,\bar J Y)-(f.\bar\omega)(\bar J X,\bar J Y)\\
 &=\big((I^*f-\bar J\circ f).\bar\omega\big)(\bar J X,Y)+\big((I^*f-\bar J\circ f).\bar\omega\big)(X,\bar J Y)\,,
\end{align*}
where the symmetry of $\bar J$ with respect to $\bar\omega$ was used in the last equality. Moreover, using again the symmetry of $\bar J$ and \eqref{eq:condition7a} one sees that the two terms in the last equation above are equal. Since this holds for arbitrary vectors, we get $(I^*f-\bar J\circ f).\bar\omega=0$ which, together with \eqref{eq:condition7a}, gives
the first condition in the lemma:
\begin{equation*}
I^*f-\bar J\circ f\in V^*\otimes\mathfrak{sp}(\bar\mfg,\bar J,\bar\omega)\,.
\end{equation*}
\end{proof}

With these three lemmas we prove the following result:

\begin{proposition}\label{pro:complexsymplecticoxidation}
Let $(\bar{\mfg},\bar J,\bar{\omega})$ be a complex symplectic Lie algebra, let $V$ be a two-dimensional real vector space with a complex structure
 $I\colon V\to V$ and set $\mfg\coloneqq V\oplus\bar{\mfg}\oplus V^*$. Let $\left[\cdot,\cdot\right]\colon\mfg\times\mfg\to\mfg$ be the skew-symmetric map defined by equations \eqref{eq:Liebracketong1} -- \eqref{eq:Liebracketong3}, let $J\colon\mfg\to\mfg$ be the almost complex structure defined by \eqref{eq:Jong} and let $\omega\in\Lambda^2\mfg^*$ be the 2-form defined by \eqref{eq:omegaong}. Also, let $g=S+A$ be the decomposition of $g$ in its symmetric and skew-symmetric part. Then $\left[\cdot,\cdot\right]$ is a Lie bracket on $\mfg$ and $(\mfg,J,\omega)$ a complex symplectic Lie algebra if and only if all of the following seven conditions hold:
\begin{itemize}
\item[1.] $f\in V^*\otimes \mathrm{Der}(\bar{\mfg})$ and $I^*f-\bar{J}\circ f\in V^*\otimes\mathfrak{sp}(\bar{\mfg},\bar J,\bar{\omega})$;
\item[2.] $\beta=-f.\bar\omega$;
\item[3.] $\nu=\bar J(\tr(S_I))^{\sharp}$;
\item[4.] $A=\frac{1}{2}\nu\hook\bar\omega=\frac{1}{2} \bar J^*(\tr(S_I))$;
\item[5.] $\Alt(S\circ f)=-\frac{1}{2}\bar\omega(f(\nu),\cdot)-\frac{3}{4}\bar\omega(\nu,f)$;
\item[6.] $\{f,f\}=\ad_\nu$;
\item[7.] $d_{\bar\mfg} S=f.f.\bar{\omega}-\tfrac{1}{2}d_{\bar\mfg}(\bar J^*(\tr_V(S_I)))$.           
\end{itemize}
\end{proposition}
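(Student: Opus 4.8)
The plan is to reduce the statement to Lemmas \ref{lemma-jacobi}, \ref{lemma-omega} and \ref{lemma-J} and then compress the conditions appearing there, discarding the redundancies. First I would record the ``automatic'' part: for any choice of $\nu,\tau,f,g,\beta$ the $2$-form $\omega$ of \eqref{eq:omegaong} is non-degenerate (testing it against $(0,0,\gamma)$, $(w,0,0)$ and $(0,Y,0)$ forces $v=0$, $\alpha=0$ and $X=0$, using that $\bar\omega$ is non-degenerate), and the endomorphism $J$ of \eqref{eq:Jong} satisfies $J^2=-\id$ and is symmetric with respect to $\omega$, since $\bar J^2=-\id$ and $\bar J$ is symmetric with respect to $\bar\omega$. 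Hence $[\cdot,\cdot]$ is a Lie bracket making $(\mfg,J,\omega)$ a complex symplectic Lie algebra if and only if $[\cdot,\cdot]$ satisfies the Jacobi identity, $d\omega=0$ and $N_J=0$. By Lemmas \ref{lemma-jacobi}, \ref{lemma-omega} and \ref{lemma-J} (the hypotheses of the latter two being supplied by the conclusions of the former ones), these three requirements hold simultaneously if and only if conditions (i)--(v) of Lemma \ref{lemma-jacobi} hold together with $\nu\hook\bar\omega=2A$ and $\beta=-f.\bar\omega$ from Lemma \ref{lemma-omega}, and $I^*f-\bar J\circ f\in V^*\otimes\mathfrak{sp}(\bar\mfg,\bar J,\bar\omega)$ and $\nu=\bar J(\tr(S_I))^\sharp$ from Lemma \ref{lemma-J}. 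Note that $\tau$ occurs in none of these conditions, which is why it does not occur in conditions 1--7 either. It remains to prove that this list of nine conditions is equivalent to 1.--7.

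Four equivalences are immediate: (i) together with $I^*f-\bar J\circ f\in V^*\otimes\mathfrak{sp}(\bar\mfg,\bar J,\bar\omega)$ is condition 1; $\beta=-f.\bar\omega$ is condition 2; $\nu=\bar J(\tr(S_I))^\sharp$ is condition 3; and (ii) is condition 6. For condition 4, condition 3 gives $\nu\hook\bar\omega=\bar J^*(\tr(S_I))$ once one unwinds the definition of $^\sharp$ and uses the symmetry of $\bar J$ with respect to $\bar\omega$; combining this with $\nu\hook\bar\omega=2A$ yields condition 4, namely $A=\tfrac12\nu\hook\bar\omega=\tfrac12\bar J^*(\tr(S_I))$. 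For condition 7, I would substitute $\beta=-f.\bar\omega$ into (iii) to get $d_{\bar\mfg}g=f.f.\bar\omega$, then decompose $g=S+A$ and replace $A$ using condition 4; the result is exactly $d_{\bar\mfg}S=f.f.\bar\omega-\tfrac12 d_{\bar\mfg}(\bar J^*(\tr(S_I)))$, i.e.\ condition 7. All of these steps are reversible.

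Two genuine points remain. First, condition (v) is automatic: by condition 1 each $f(v)$ is a derivation of $\bar\mfg$ and hence induces a derivation of $(\Lambda^\bullet\bar\mfg^*,d_{\bar\mfg})$ commuting with $d_{\bar\mfg}$, so from $\beta=-f.\bar\omega$ and $d_{\bar\mfg}\bar\omega=0$ one gets $d_{\bar\mfg}\beta=0$; this is why (v) need not be listed. Second---and this is the step I expect to be the main obstacle---one must turn (iv) into condition 5. Starting from $\nu\hook\beta=2\Alt(g\circ f)$, substitute $\beta=-f.\bar\omega$ on the left and $g=S+A$ on the right, so that $2\Alt(g\circ f)=2\Alt(S\circ f)+2\Alt(A\circ f)$, and use conditions 3 and 4 to rewrite $A$, hence $\Alt(A\circ f)$, in terms of $\nu$. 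Isolating $\Alt(S\circ f)$ and simplifying---here one has to keep track of the canonical reindexing $V^*\otimes\bar\mfg^*\otimes V^*\cong V^*\otimes V^*\otimes\bar\mfg^*$ used to define $S$ and $A$, of the factor $\tfrac12$ in the definition of $\Alt$, and of the identity $a(v,w)u-a(v,u)w+a(w,u)v=0$ valid for $a\in\Lambda^2V^*$ since $\dim V=2$---one arrives at $\Alt(S\circ f)=-\tfrac12\bar\omega(f(\nu),\cdot)-\tfrac34\bar\omega(\nu,f)$, which is condition 5, the computation being reversible. Assembling all the equivalences in both directions finishes the proof.
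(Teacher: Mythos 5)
Your proposal is correct and follows essentially the same route as the paper's proof: reduce to the conditions of Lemmas \ref{lemma-jacobi}, \ref{lemma-omega} and \ref{lemma-J}, read off conditions 1--4 and 6 directly, observe that (v) is automatic because $d_{\bar\mfg}\beta=-d_{\bar\mfg}(f.\bar\omega)=-f.(d_{\bar\mfg}\bar\omega)=0$, and convert (iii) and (iv) into conditions 7 and 5 by substituting $\beta=-f.\bar\omega$, decomposing $g=S+A$, and using the vanishing of $\Lambda^3V^*$ for $\dim V=2$. The only difference is that you sketch rather than carry out the final computation for condition 5, but the ingredients you list are exactly those the paper uses.
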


\begin{proof}
Observe that Lemmas~\ref{lemma-jacobi}, \ref{lemma-omega} and \ref{lemma-J} must hold. These directly give us conditions 1 -- 4 and 6 in the statement above. We next show that 5. and 7. are equivalent to conditions \textrm{(iii)} and \textrm{(iv)} in Lemma~\ref{lemma-jacobi}. This will conclude the proof, as part \textrm{(v)} in Lemma~\ref{lemma-jacobi} is trivially fulfilled. The reason is that
conditions 1. and 2. in the statement
(namely, $f\in V^*\otimes \mathrm{Der}(\bar{\mfg})$ and
$\beta=-f.\bar\omega$) directly give
\[
 d_{\bar\mfg}\beta=-d_{\bar\mfg}(f.\bar\omega)=-f.(d_{\bar\mfg}\bar\omega)=0,
\]
since $\bar\omega$ is $d_{\bar\mfg}$-closed.

Let us then rewrite part \textrm{(iv)} of Lemma~\ref{lemma-jacobi}. On the one hand, using part 2. of the proposition one gets
\begin{align*}
\Alt(g\circ f)(v,w,X,u)&=-\frac{1}{2}(\nu\hook f.\bar\omega)(v,w,X,u)\\
&=-\frac{1}{2}\bar\omega\big(f(u,\nu(v,w)),X\big)-\frac{1}{2}\bar\omega\big(\nu(v,w),f(u,X)\big).
\end{align*}
On the other hand, by 4. in the proposition, we get
\begin{align*}
\Alt(A\circ f)(v,w,X,u)&=\frac{1}{2}\,A\big(v,f(w,X),u\big)-\frac{1}{2}\,A\big(w,f(v,X),u\big)\\
&=\frac{1}{4}\,(\nu\hook\bar\omega)\big(v,f(w,X),u\big)-\frac{1}{4}\,(\nu\hook\bar\omega)\big(w,f(v,X),u\big)\\
&=\frac{1}{4}\,\bar\omega\big(\nu(v,u),f(w,X)\big)-\frac{1}{4}\,\bar\omega\big(\nu(w,u),f(v,X)\big)\,.
\end{align*}
Hence, writing $g=S+A$, we have
\begin{align*}
\Alt(S\circ f)(v,w,X,u)&=-\frac{1}{2}\,\bar\omega\big(f(u,\nu(v,w)),X\big)-\frac{3}{4}\,\bar\omega\big(\nu(v,w),f(u,X)\big)\\
&+\frac{1}{4}\,\Big(\bar\omega\big(\nu(v,w),f(u,X)\big)-\bar\omega\big(\nu(v,u),f(w,X)\big)\\
&+\bar\omega\big(\nu(w,u),f(v,X)\big)\Big)\\
&=-\frac{1}{2}\,\bar\omega\big(f(u,\nu(v,w)),X\big)-\frac{3}{4}\,\bar\omega\big(\nu(v,w),f(u,X)\big)\,,
\end{align*}
since the last three terms form an element of $\Lambda^3V^*\otimes\bar\mfg^*$, which is zero since $\dim V~=~2$. We rewrite the last equation as
\begin{equation*}
 \Alt(S\circ f)=-\frac{1}{2}\,\bar\omega\big(f(\nu),\cdot\big)-\frac{3}{4}\,\bar\omega\big(\nu,f\big)
\end{equation*}
in $\Lambda^2V^*\otimes\bar\mfg^*\otimes V^*$. This is precisely part  5. in the statement above.

We now focus on part \textrm{(iii)} of Lemma~\ref{lemma-jacobi}. Using conditions 3. and 4. we compute
\begin{align*}
(d_{\bar\mfg}A)(v,X,Y,w)&=-A(v,[X,Y]_{\bar\mfg},w)=-\frac{1}{2}\bar\omega(\nu(v,w),[X,Y]_{\bar\mfg})\\
&=-\frac{1}{2}\bar J^*\tr(S_I)(v,w,[X,Y]_{\bar\mfg})=\frac{1}{2}d_{\bar\mfg}(\bar J^*\tr(S_I)(v,w))(X,Y)
\end{align*}
and we arrive at
\begin{equation}\label{eq:condition100b}
d_{\bar\mfg}A=\frac{1}{2}d_{\bar\mfg}(\bar J^*\tr(S_I))\,.
\end{equation}
Moreover, from part 2. we have
\begin{equation}\label{eq:condition100a}
d_{\bar\mfg}g=-f.\beta=f.f.\bar\omega\,.
\end{equation}
Putting \eqref{eq:condition100a} and \eqref{eq:condition100b} together, we obtain
\begin{equation*}
d_{\bar\mfg}S=d_{\bar\mfg}g-d_{\bar\mfg}A=f.f.\bar\omega-\frac{1}{2}d_{\bar\mfg}(\bar J^*\tr(S_I))\,.
\end{equation*}
which is precisely condition 7. in our statement.
\end{proof}

Let $(\bar{\mfg},\bar J,\bar{\omega})$ be a complex symplectic Lie algebra and let $V$ be 2-dimensional real vector space endowed with a complex structure $I$. Given a triple $(f,S,\tau)\in V^*\otimes \mathrm{Der}(\bar{\mfg})\times S^2 V^*\otimes \bar{\mfg}^*\times \Lambda^2 V^*\otimes V^*$, define
\begin{equation}\label{eq:inducedtensorsoxdata}
\beta\coloneqq -f.\bar{\omega},\qquad \nu\coloneqq \bar J(\tr(S_I))^{\sharp}, \qquad 
A\coloneqq\frac{1}{2} \bar J^*(\tr(S_I))\,.
\end{equation}
Then Proposition \ref{pro:complexsymplecticoxidation} motivates the following definition:

\begin{definition}\label{def:oxidationdata}
A triple $(f,S,\tau)$ is called \textit{complex symplectic oxidation data} for $(\bar{\mfg},\bar J,\bar{\omega})$ if
\begin{itemize}
\item[(i)] $I^*f-\bar J\circ f\in V^*\otimes\mathfrak{sp}(\bar{\mfg},\bar J,\bar{\omega})$
\item[(ii)] $\{f,f\}=\ad_\nu$,
\item[(iii)] $d_{\bar\mfg} S=f.f.\bar{\omega}-\tfrac{1}{2}d_{\bar\mfg}(\bar J^*(\tr_V(S_I)))$ and
\item[(iv)] $\Alt(S\circ f)=-\frac{1}{2}\bar\omega(f(\nu),\cdot)-\frac{3}{4}\bar\omega(\nu,f)$.
\end{itemize}
The complex symplectic Lie algebra $(\mfg,\omega,J)$ defined by \eqref{eq:omegaong} -- \eqref{eq:Liebracketong3}
using the tensors defined in \eqref{eq:inducedtensorsoxdata} is the \emph{complex symplectic oxidation} of $(\bar{\mfg},\bar J,\bar{\omega})$ by the complex symplectic oxidation data $(f,S,\tau)$. 
Two complex symplectic oxidation data $(f,S,\tau)$ and $(\tilde{f},\tilde{S},\tilde{\tau})$ are \emph{isomorphic} if the corresponding complex symplectic oxidations are isomorphic as complex symplectic Lie algebras.
\end{definition}
\begin{remark}
Note that the natural action of $\GL(V,I)\times \mathrm{Aut}(\bar{\mfg},\bar J,\bar{\omega})$ on
$V^*\otimes \mathrm{Der}(\bar{\mfg})\times S^2 V^*\otimes \bar{\mfg}^*\times \Lambda^2 V^*\otimes V^*$ preserves the subset of 
complex symplectic oxidation data and all elements in one orbit are isomorphic to each other.
\end{remark}

We extract the following two corollaries from Proposition \ref{pro:complexsymplecticoxidation}:
\begin{corollary}\label{co:redox}
The complex symplectic oxidation $(\mfg,J,\omega)$ of a $4n$-dimensional complex symplectic Lie algebra $(\bar{\mfg},\bar J,\bar{\omega})$ by complex symplectic oxidation data $(f,S,\tau)$ is a complex symplectic Lie algebra of dimension $4n+4$ with a central, $J$-invariant ideal $V^*$. The complex symplectic reduction of $(\mfg,J,\omega)$ with respect to $V^*$ is isomorphic to $(\bar{\mfg},\bar J,\bar{\omega})$. Moreover, $\mfg$ is nilpotent if $\bar{\mfg}$ and $f$ are nilpotent.
\end{corollary}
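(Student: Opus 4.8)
The plan is to verify each of the three assertions in Corollary~\ref{co:redox} by unwinding definitions, making heavy use of Proposition~\ref{pro:complexsymplecticoxidation} and the structure of the oxidized bracket \eqref{eq:Liebracketong1}--\eqref{eq:Liebracketong3}. First, since $(f,S,\tau)$ is complex symplectic oxidation data, conditions (i)--(iv) of Definition~\ref{def:oxidationdata} together with the induced tensors \eqref{eq:inducedtensorsoxdata} show that the seven conditions of Proposition~\ref{pro:complexsymplecticoxidation} hold; hence $[\cdot,\cdot]$ is a Lie bracket and $(\mfg,J,\omega)$ is a complex symplectic Lie algebra. Its dimension is $\dim V + \dim\bar\mfg + \dim V^* = 2 + 4n + 2 = 4n+4$. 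That $V^*$ is an ideal is immediate from \eqref{eq:Liebracketong1}--\eqref{eq:Liebracketong3}, since every bracket lands in $\bar\mfg\oplus V^*$ and, more precisely, $[\mfg, V^*]=0$ because the third slot never appears on the right of any bracket; thus $V^*$ is central. It is $J$-invariant by \eqref{eq:Jong}, as $J(0,0,\alpha)=(0,0,I^*\alpha)\in V^*$.

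Next I would identify the complex symplectic reduction of $(\mfg,J,\omega)$ with respect to $V^*$. By Claim~\ref{cl:centralJinvariant2Disotropic} the $J$-invariant $2$-dimensional ideal $V^*$ is $\omega$-isotropic, so complex symplectic reduction applies (Lemma~\ref{le:complexsymplecticreduction}). One computes $(V^*)^\perp$ with respect to $\omega$ from \eqref{eq:omegaong}: $(v,X,\alpha)$ is $\omega$-orthogonal to all $(0,0,\gamma)$ iff $\gamma(v)=0$ for all $\gamma\in V^*$, i.e.\ iff $v=0$; hence $(V^*)^\perp = \bar\mfg\oplus V^*$. Therefore $(V^*)^\perp/V^* \cong \bar\mfg$ as a vector space, with the isomorphism sending $(0,X,0)+V^*\mapsto X$. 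Under this identification, \eqref{eq:Liebracketong3} shows the induced bracket is exactly $[\cdot,\cdot]_{\bar\mfg}$ (the $\beta$-component dies in the quotient), \eqref{eq:omegaong} shows the induced symplectic form is $\bar\omega$, and \eqref{eq:Jong} shows the induced complex structure is $\bar J$. So the reduction is isomorphic to $(\bar\mfg,\bar J,\bar\omega)$, as claimed.

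Finally, for the nilpotency statement, suppose $\bar\mfg$ is nilpotent and $f$ is nilpotent (meaning each $f(v)\in\Der(\bar\mfg)$ is a nilpotent endomorphism, or more precisely that the relevant derivations are nilpotent). I would show the descending central series of $\mfg$ terminates. The key observation is that $V^*$ is central, so it only helps; the obstruction to nilpotency lives in how $V$ and $\bar\mfg$ interact. Writing $\mfg_1=[\mfg,\mfg]\subseteq\bar\mfg\oplus V^*$, and iterating, one sees $\mfg_{k+1}=[\mfg,\mfg_k]$ is controlled by $[\bar\mfg,\bar\mfg]_{\bar\mfg}$-terms together with $f(v,\cdot)$-terms acting on the $\bar\mfg$-component; since the $\bar\mfg$-components of the descending central series of $\mfg$ are contained in subspaces built from iterated brackets $[\cdot,\cdot]_{\bar\mfg}$ and iterated applications of the $f(v)$, and both $\ad_{\bar\mfg}$ and each $f(v)$ act nilpotently, the ideal $\mathrm{pr}_{\bar\mfg}(\mfg_k)$ eventually vanishes, after which $\mfg_{k+1}\subseteq V^*$ is central, so $\mfg_{k+2}=0$. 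The main obstacle here is making the bookkeeping precise: one must argue that the mixed brackets (the $f$-terms) do not generate new $\bar\mfg$-directions indefinitely, which follows because $V$ is $2$-dimensional and abelian as a "source" (the $\nu$-term feeds back into $\bar\mfg$ but is itself an iterated $f$-expression by condition~6 / $\ad_\nu=\{f,f\}$), so everything is ultimately expressed via a nilpotent algebra of operators on the finite-dimensional space $\bar\mfg$. A clean way to package this is to note that the subalgebra of $\End(\bar\mfg)$ generated by $\ad_{\bar\mfg}(\bar\mfg)$ and the $f(v)$ is a nilpotent Lie algebra of nilpotent operators (by Engel, using that $[\mathrm{ad}_X, f(v)] = \mathrm{ad}_{f(v,X)}$ from condition~(i)/(iii) of Lemma~\ref{lemma-jacobi}), hence acts nilpotently, forcing $\mfg$ to be nilpotent.
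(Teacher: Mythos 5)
Your first two paragraphs are fine and coincide with what the paper intends: the paper gives no written proof of Corollary~\ref{co:redox}, presenting it as directly ``extracted'' from Proposition~\ref{pro:complexsymplecticoxidation}, and your unwinding of the definitions (the seven conditions, the computation $(V^*)^{\perp}=\bar\mfg\oplus V^*$ from \eqref{eq:omegaong}, and the identification of the reduced bracket, form and complex structure with $[\cdot,\cdot]_{\bar\mfg}$, $\bar\omega$, $\bar J$) is exactly the intended verification.

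The nilpotency claim, however, contains a genuine gap in the appeal to Engel's theorem. Engel requires that \emph{every} element of the Lie algebra $L\subseteq\End(\bar\mfg)$ spanned by $\ad_{\bar\mfg}(\bar\mfg)$ and $f(V)$ be a nilpotent endomorphism; nilpotency of a generating set is not sufficient (an $\mathfrak{sl}_2$-triple is generated by two nilpotent matrices), so the assertion that $L$ ``is a nilpotent Lie algebra of nilpotent operators (by Engel)'' is circular as stated: one must first prove that $\ad_X+f(v)$ is nilpotent for all $X\in\bar\mfg$, $v\in V$, and that is essentially the whole point. The commutation relations you cite do close the gap, but an extra step is needed: every $f(v)$, being a derivation, preserves the lower central series $C^1(\bar\mfg)\supseteq C^2(\bar\mfg)\supseteq\cdots$ of $\bar\mfg$, each $\ad_X$ maps $C^k(\bar\mfg)$ into $C^{k+1}(\bar\mfg)$, and on each graded piece $C^k/C^{k+1}$ the induced operators of $f(v_1)$ and $f(v_2)$ are nilpotent and \emph{commute}, because $[f(v_1),f(v_2)]=\ad_{\nu(v_1,v_2)}$ kills the graded pieces. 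Hence one can refine the lower central series by a common flag for these commuting nilpotent operators on each graded piece; every element of $L$ strictly decreases the refined flag and is therefore nilpotent. (Alternatively, the set $\ad_{\bar\mfg}(\bar\mfg)\cup f(V)$ is weakly closed and consists of nilpotent operators, so Jacobson's theorem applies directly.) With that in hand, the associative algebra generated by $L$ is nilpotent and your bookkeeping for the lower central series of $\mfg$ --- whose $\bar\mfg$-components from degree two on are images of words in $L$, so that eventually $C^k(\mfg)\subseteq V^*$ and then $C^{k+1}(\mfg)=0$ --- goes through.
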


\begin{corollary}\label{co:redox-2}
A complex symplectic Lie algebra $(\mfg,J,\omega)$ of dimension $4n+4$ is the oxidation of a complex symplectic Lie algebra $(\bar{\mfg},\bar J,\bar{\omega})$ of dimension $4n$ if and only if $\mfz(\mfg)$ satisfies
\[
 \mathfrak{z}(\mfg)\cap J\mathfrak{z}(\mfg)\neq \{0\}\,.
\]

\end{corollary}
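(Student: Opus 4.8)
The plan is to prove the two implications separately. The forward one is immediate: if $(\mfg,J,\omega)$ is the complex symplectic oxidation of a $4n$-dimensional complex symplectic Lie algebra $(\bar{\mfg},\bar J,\bar\omega)$, then by Corollary~\ref{co:redox} the summand $V^*\subseteq\mfg$ is a central, $J$-invariant ideal, so $V^*\subseteq\mfz(\mfg)$ and $JV^*=V^*\subseteq\mfz(\mfg)$, whence $\{0\}\neq V^*\subseteq\mfz(\mfg)\cap J\mfz(\mfg)$.

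For the converse I would start from a nonzero $X\in\mfz(\mfg)\cap J\mfz(\mfg)$. Since $J$ has no real eigenvalue, $\{X,JX\}$ is linearly independent, and writing $X=JY$ with $Y\in\mfz(\mfg)$ gives $JX=-Y\in\mfz(\mfg)$; thus $\mfa\coloneqq\spa{X,JX}$ is a $2$-dimensional, $J$-invariant, central ideal, and it is $\omega$-isotropic by Claim~\ref{cl:centralJinvariant2Disotropic}. Complex symplectic reduction (Lemma~\ref{le:complexsymplecticreduction}) then yields a complex symplectic Lie algebra $(\bar{\mfg},\bar J,\bar\omega)$ on $\bar{\mfg}=\mfa^\perp/\mfa$, of dimension $4n$, while $\mfa^\perp$ is a $J$-invariant ideal of $\mfg$. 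It then remains to realize $(\mfg,J,\omega)$ as the oxidation of this reduced algebra.

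The crux is to manufacture the splitting underlying Proposition~\ref{pro:complexsymplecticoxidation}. Because $\mfa^\perp$ is $J$-invariant and $\mfa$ is a complex subspace of it, $\mfa$ has a $J$-invariant complement $\bar{\mfg}'\subset\mfa^\perp$, which I identify with $\bar{\mfg}$; then $\bar J=J|_{\bar{\mfg}'}$ and $\bar\omega=\omega|_{\bar{\mfg}'}$, the latter being non-degenerate since it corresponds to the reduced form via the isomorphism $\bar{\mfg}'\cong\mfa^\perp/\mfa$. Hence $\mfg=\bar{\mfg}'\oplus(\bar{\mfg}')^\perp$ with $(\bar{\mfg}')^\perp$ a $4$-dimensional, $J$-invariant, symplectic subspace containing $\mfa$ (from $\mfa\subset\mfa^\perp$ one gets $\mfa\perp\bar{\mfg}'$). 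Picking any $w\in(\bar{\mfg}')^\perp\setminus\mfa$ and setting $V\coloneqq\spa{w,Jw}$, the subspace $V$ is a $J$-invariant complex line distinct from $\mfa$, so $(\bar{\mfg}')^\perp=\mfa\oplus V$ and $\mfg=\mfa^\perp\oplus V$ with $V$ $\omega$-orthogonal to $\bar{\mfg}'$, and $V$ is $\omega$-isotropic by the same argument as in the proof of Claim~\ref{cl:centralJinvariant2Disotropic}. Finally $\mfa\to V^*$, $u\mapsto\omega(u,\cdot)|_V$, is a $J$-equivariant isomorphism (injective because $u\in\mfa$ with $\omega(u,V)=0$ also satisfies $\omega(u,\bar{\mfg}'\oplus\mfa)=0$, hence $\omega(u,\mfg)=0$), and I use it to identify $\mfa$ with $V^*$.

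In the resulting decomposition $\mfg=V\oplus\bar{\mfg}'\oplus V^*$, with $I\coloneqq J|_V$, the structures $J$ and $\omega$ are exactly those of \eqref{eq:Jong} and \eqref{eq:omegaong}; and since $V^*=\mfa$ is central and $[\mfg,\mfg]\subseteq\mfa^\perp=\bar{\mfg}'\oplus V^*$ by Lemma~\ref{le:complexsymplecticreduction}, the bracket of $\mfg$ has precisely the form \eqref{eq:Liebracketong1}--\eqref{eq:Liebracketong3} for some tensors $\nu,\tau,f,g,\beta$, with $[\cdot,\cdot]_{\bar{\mfg}}$ the bracket of $\bar{\mfg}$. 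Writing $g=S+A$ with $A=\Alt(g)$, we are then in the setting of Proposition~\ref{pro:complexsymplecticoxidation}, and since $(\mfg,J,\omega)$ is a complex symplectic Lie algebra by hypothesis, all seven conditions there hold: conditions~2.--4.\ say that $\beta,\nu,A$ are the tensors \eqref{eq:inducedtensorsoxdata} associated with $(f,S,\tau)$, and conditions~1., 6., 7., 5.\ are verbatim (i)--(iv) of Definition~\ref{def:oxidationdata}. Hence $(f,S,\tau)$ is complex symplectic oxidation data for $(\bar{\mfg},\bar J,\bar\omega)$, and the oxidation it defines is, by construction, $(\mfg,J,\omega)$ itself. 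I expect the only genuine obstacle to be the bookkeeping in this last step — checking that the chosen $J$- and $\omega$-compatible splitting really does put $J$, $\omega$ and $[\cdot,\cdot]$ into the normal form of the oxidation ansatz; once that is done, Proposition~\ref{pro:complexsymplecticoxidation} supplies all the substance.
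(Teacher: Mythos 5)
Your proposal is correct and follows the same route the paper intends: the forward direction is read off from Corollary~\ref{co:redox}, and the converse takes $\mfa=\spa{X,JX}$ for $0\neq X\in\mfz(\mfg)\cap J\mfz(\mfg)$, performs the complex symplectic reduction of Lemma~\ref{le:complexsymplecticreduction}, and then reconstructs exactly the splitting $\mfg=V\oplus\bar\mfg\oplus V^*$ with the $J$-equivariant identification $\mfa\cong V^*$ described in the paper before Lemma~\ref{lemma-jacobi}, so that Proposition~\ref{pro:complexsymplecticoxidation} applies verbatim. The only difference is that you spell out the bookkeeping (existence of the $J$-invariant, $\omega$-orthogonal complements and the isotropy of $V$) that the paper leaves implicit; all of those checks are carried out correctly.
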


For later purposes, it is useful to reformulate the conditions on the complex symplectic oxidation data $(f,S,\tau)$ 
by fixing a basis $\{v,Iv\}$ of $(V,I)$:

\begin{lemma}\label{le:oxidationdata}
Let $(\bar{\mfg},\bar J,\bar{\omega})$ be a complex symplectic Lie algebra, let $V$ be a two-dimensional real vector
space endowed with a complex structure $I$ and $v\in V\setminus \{0\}$ be given. Moreover, let $(f,S,\tau)\in V^*\otimes \mathrm{Der}(\bar{\mfg})\times S^2 V^*\otimes \bar{\mfg}^*\times \Lambda^2 V^*\otimes V^*$ be given and set $v_1\coloneqq v$, $v_2\coloneqq Iv$, $f_i\coloneqq f(v_i)\in \mathrm{Der}(\bar{\mfg})$
and $S_{ij}\coloneqq S(v_i,v_j)\in \bar{\mfg}^*$ for all $i,j=1,2$. Then $(f,S,\tau)$ is a complex symplectic oxidation datum on $(\bar{\mfg},\bar J,\bar{\omega})$ if and only if 
\begin{itemize}
\item[(i)] $f_2-\bar J f_1\in \mathfrak{sp}(\bar{\mfg},\bar J,\bar{\omega})$,
\item[(ii)]
$\{f_1,f_2\}=[\bar J(S_{11}^{\sharp}+S_{22}^{\sharp}),\cdot]_{\bar{\mfg}}$,
\item[(iii)]
$d_{\bar{\mfg}} S_{ii}=f_i.(f_i.\bar{\omega})$ for $i=1,2$, 
$d_{\bar{\mfg}} S_{12}=f_1.(f_2.\bar{\omega})-\tfrac{1}{2}d_{\bar{\mfg}} (\bar J^*(S_{11}+S_{22}))$ and
\item[(iv)]
$S_{i2}\circ f_1-S_{i1}\circ f_2=\,\bar{\omega}\left(f_i\left(\bar J(S_{11}^{\sharp}+S_{22}^{\sharp})\right),\cdot\right)
                                 +\frac{3}{2}\,\left(S_{11}+S_{22}\right)\circ \bar J\circ f_i$ for $i=1,2$.
\end{itemize}
\end{lemma}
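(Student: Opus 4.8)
The plan is to fix the basis $\{v_1,v_2\}:=\{v,Iv\}$ of $V$, record that $Iv_1=v_2$ and $Iv_2=-v_1$, and translate each of the four tensorial conditions (i)--(iv) of Definition~\ref{def:oxidationdata} into equations for the components $f_i=f(v_i)$ and $S_{ij}=S(v_i,v_j)$. The computation feeding all four is that of $\tr_V(S_I)$: unwinding the definition of $\tr_V$ exactly as in the proof of Lemma~\ref{lemma-J}, one has $\tr_V(S_I)(v_1,v_2)=S(v_1,v_1)+S(v_2,v_2)=S_{11}+S_{22}$, and since $\tr_V(S_I)\in\Lambda^2V^*\otimes\bar\mfg^*$ with $\dim V=2$ this forces $\tr_V(S_I)=(S_{11}+S_{22})\otimes(v^1\wedge v^2)$ (with $\{v^1,v^2\}$ the dual basis). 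Consequently, by \eqref{eq:inducedtensorsoxdata}, $\nu(v_1,v_2)=\bar J(S_{11}^\sharp+S_{22}^\sharp)$ and $A(v_1,v_2)=\tfrac12\bar J^*(S_{11}+S_{22})$. Throughout I would use $\dim V=2$ (so $\Lambda^3V^*=0$, and a $2$-form on $V$ is determined by its value on $(v_1,v_2)$) together with the symmetry of $\bar J$ with respect to $\bar\omega$, in the forms $\bar\omega(\bar JZ,W)=\bar\omega(Z,\bar JW)$ and $\bar\omega(\bar J\alpha^\sharp,\cdot)=\alpha\circ\bar J=\bar J^*\alpha$ for $\alpha\in\bar\mfg^*$.

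Conditions (i) and (ii) then come out quickly. Evaluating $I^*f-\bar J\circ f$ on $v_1$ gives $f_2-\bar Jf_1$ and on $v_2$ gives $-f_1-\bar Jf_2=-\bar J(f_2-\bar Jf_1)$; since $\mathfrak{sp}(\bar\mfg,\bar J,\bar\omega)$ is $\bar J$-invariant (if $D\in\mathfrak{sp}(\bar\mfg,\bar\omega)$ commutes with $\bar J$ then so does $\bar JD$, by the symmetry of $\bar J$), condition (i) of Definition~\ref{def:oxidationdata} is equivalent to $f_2-\bar Jf_1\in\mathfrak{sp}(\bar\mfg,\bar J,\bar\omega)$. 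For condition (ii), the only non-zero component is $\{f,f\}(v_1,v_2)=f_1\circ f_2-f_2\circ f_1$, while $\ad_\nu(v_1,v_2)=[\bar J(S_{11}^\sharp+S_{22}^\sharp),\,\cdot\,]_{\bar\mfg}$, so $\{f,f\}=\ad_\nu$ becomes exactly (ii) of the lemma.

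For condition (iii) one must remember that it is an identity in $V^*\otimes V^*\otimes\Lambda^2\bar\mfg^*$. Its symmetric part in the two $V^*$-slots reads $d_{\bar\mfg}S_{ii}=f_i.(f_i.\bar\omega)$ on $(v_i,v_i)$ and $d_{\bar\mfg}S_{12}=\tfrac12\bigl(f_1.(f_2.\bar\omega)+f_2.(f_1.\bar\omega)\bigr)$ on the symmetric $(v_1,v_2)$-slot, while its antisymmetric part is the single identity $f_1.(f_2.\bar\omega)-f_2.(f_1.\bar\omega)=d_{\bar\mfg}(\bar J^*(S_{11}+S_{22}))$. The key observation is that this last identity is automatic once (ii) holds: since $f_1,f_2$ act as derivations, $f_1.(f_2.\bar\omega)-f_2.(f_1.\bar\omega)=[f_1,f_2].\bar\omega=(\ad_N).\bar\omega$ with $N=\bar J(S_{11}^\sharp+S_{22}^\sharp)$, and Cartan's formula in the Chevalley--Eilenberg complex together with $d_{\bar\mfg}\bar\omega=0$ gives $(\ad_N).\bar\omega=d_{\bar\mfg}(\bar\omega(N,\cdot))=d_{\bar\mfg}(\bar J^*(S_{11}+S_{22}))$. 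Feeding this back into the symmetric $(v_1,v_2)$-equation rewrites it as $d_{\bar\mfg}S_{12}=f_1.(f_2.\bar\omega)-\tfrac12 d_{\bar\mfg}(\bar J^*(S_{11}+S_{22}))$, so condition (iii) of the Definition is equivalent, given (ii), to the three equations in (iii) of the lemma.

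Finally, for condition (iv): $\Alt(S\circ f)$ lies in $\Lambda^2V^*\otimes\bar\mfg^*\otimes V^*$, so by $\dim V=2$ it suffices to evaluate on $(v_1,v_2,\cdot,v_i)$ for $i=1,2$. Using $(S\circ f)(v_a,v_b,X)(v_c)=\langle S(v_a,v_c),f(v_b,X)\rangle$ gives $\Alt(S\circ f)(v_1,v_2,\cdot,v_i)=\tfrac12\bigl(S_{i1}\circ f_2-S_{i2}\circ f_1\bigr)$, whereas the right-hand side of condition (iv), in the form read off from the proof of Proposition~\ref{pro:complexsymplecticoxidation}, evaluates there to $-\tfrac12\bar\omega(f_iN,\cdot)-\tfrac34\bar\omega(N,f_i\,\cdot)$; multiplying by $-2$ and using $\bar\omega(N,f_iX)=(S_{11}+S_{22})(\bar Jf_iX)=\bigl((S_{11}+S_{22})\circ\bar J\circ f_i\bigr)(X)$ (symmetry of $\bar J$) turns this into (iv) of the lemma. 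The steps requiring genuine thought, rather than mechanical substitution, are the trace computation in the first paragraph (on which the coefficients $\tfrac12,\tfrac34,\tfrac32$ hinge) and, above all, the verification in the third paragraph that the antisymmetric-in-$V$ part of condition (iii) is forced by condition (ii) and the closedness of $\bar\omega$ — this is exactly what guarantees that the three scalar equations displayed in (iii) of the lemma are equivalent to the full tensor identity of the Definition.
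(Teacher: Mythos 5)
Your proof is correct and follows the same route as the paper's: fix the basis $\{v_1,v_2\}=\{v,Iv\}$, compute $\tr_V(S_I)(v_1,v_2)=S_{11}+S_{22}$ so that $\nu(v_1,v_2)=\bar J(S_{11}^{\sharp}+S_{22}^{\sharp})$ and $A(v_1,v_2)=\tfrac12\bar J^*(S_{11}+S_{22})$, and then translate each of (i)--(iv) of Definition~\ref{def:oxidationdata} componentwise, using the $\bar J$-invariance of $\mathfrak{sp}(\bar{\mfg},\bar J,\bar{\omega})$ for (i), that $\dim V=2$ reduces everything to the $(v_1,v_2)$-slot, and the symmetry of $\bar J$ with respect to $\bar\omega$ for (iv); your evaluations agree with the paper's.

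The one place where you do more than the paper is condition (iii). The paper dismisses the equivalence there as ``obvious'', whereas the Definition's identity $d_{\bar\mfg} S=f.f.\bar{\omega}-\tfrac{1}{2}d_{\bar\mfg}(\bar J^*(\tr_V(S_I)))$ also has a $(v_2,v_1)$-component that is not listed among the three equations of the lemma; your observation that its antisymmetric-in-$V$ part, $f_1.(f_2.\bar\omega)-f_2.(f_1.\bar\omega)=d_{\bar\mfg}(\bar J^*(S_{11}+S_{22}))$, is forced by condition (ii) together with $d_{\bar\mfg}\bar\omega=0$ is exactly the justification that makes the lemma's three equations equivalent to the full tensor identity, and it genuinely fills a gap in the paper's exposition. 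One small caveat: with the paper's convention $(D.\rho)(X,Y)=\rho(DX,Y)+\rho(X,DY)$, the map $D\mapsto D.(\cdot)$ is an \emph{anti}-homomorphism, so in fact $f_1.(f_2.\bar\omega)-f_2.(f_1.\bar\omega)=[f_2,f_1].\bar\omega=-\ad_N.\bar\omega$, and likewise $\ad_N.\bar\omega=-d_{\bar\mfg}(\bar\omega(N,\cdot))$ by expanding $d\bar\omega(N,\cdot,\cdot)=0$. Each of your two intermediate identities carries the opposite sign, but the two slips cancel and your final identity — the one actually used — is correct.
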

\begin{proof}
We show the equivalence of any of the conditions (i)-- (iv) in Lemma \ref{le:oxidationdata} to the condition with the same number in Definition \ref{def:oxidationdata}.

The equivalence of the conditions (i) is clear once one notices that
$(I^*f-\bar J\circ f)(v_1)=f_2-\bar J f_1\in \mathfrak{sp}(\bar{\mfg},\bar J,\bar{\omega})$ implies
$(I^*f-\bar J\circ f)(v_2)=-\bar J\circ (f_2-\bar J\circ f_1)\in \mathfrak{sp}(\bar{\mfg},\bar J,\bar{\omega})$.

The conditions in (ii) are equivalent, since 
\[
\nu(v_1,v_2)=\bar J(\tr(S_I))^{\sharp} (v_1,v_2)=\bar J(\tr(S_I)(v,Iv))^{\sharp}=\bar J(S_{11}^{\sharp}+S_{22}^{\sharp})\,. 
\]

As the conditions (iii) are obviously equivalent, we are left with showing the equivalence of conditions (iv). But that equivalence follows from
\begin{equation*}
\begin{split}
\Alt(S\circ f)(v_1,v_2,X,v_i)&=\tfrac{1}{2}\left(S(v_1,f(v_2,X),v_i)-S(v_2,f(v_1,X),v_i)\right)\\
&=\tfrac{1}{2}(S_{1i}\circ f_2-S_{2i}\circ f_1)(X)\\
&=-\tfrac{1}{2}(S_{i2}\circ f_1-S_{i1}\circ f_2)(X),
\end{split}
\end{equation*}

\begin{equation*}
\bar\omega(f(\nu),\cdot)(v_1,v_2,X,v_i)=\bar\omega(f(v_i,\nu(v_1,v_2)),X)=\bar{\omega}\left(f_i\left(\bar J(S_{11}^{\sharp}+S_{22}^{\sharp})\right),X\right)
\end{equation*}
and
\begin{equation*}
\begin{split}
\bar\omega(\nu,f)(v_1,v_2,X,v_i)&=\bar\omega(\nu(v_1,v_2),f(v_i,X))=\bar\omega(\bar J(S_{11}^{\sharp}+S_{22}^{\sharp}),f_i(X))\\
&=\bar\omega(S_{11}^{\sharp}+S_{22}^{\sharp},\bar J f_i(X))=\left(\left(S_{11}+S_{22}\right)\circ \bar J\circ f_i\right)(X)
\end{split}
\end{equation*}
for any $X\in\overline{\mfg}$ and any $i\in \{1,2\}$.
\end{proof}

\section{Nilpotent complex symplectic Lie algebras}\label{explicit-8-d}

In this section we apply our construction to nilpotent Lie algebras and show that every nilpotent complex symplectic Lie algebra of dimension $4$ and $8$ can be obtained by oxidating a nilpotent complex symplectic Lie algebras of dimension $0$ and $4$ respectively. We start with some preliminary facts concerning complex structures on nilpotent Lie algebras.


\begin{definition}
Let $\mfg$ be a Lie algebra. The \textit{ascending central series} of $\mfg$ is the sequence of ideals of $\mfg$ recursively defined by 
\[
\mfg_0=\left\lbrace 0\right\rbrace, \quad \mfg_k=\left\lbrace X\in\mfg\ |\ \left[X,\mfg\right]\subseteq\mfg_{k-1} \right\rbrace \textrm{ for } k\geq 1\,.
\]
\end{definition}
Notice that $\mfg_1=\mfz(\mfg)$ and that $\mfg$ is nilpotent if there is some $s\geq 1$ such that $\mfg_k=\mfg$ for all $k\geq s$. The smallest integer $s$ satisfying the latter condition is called the
\textit{nilpotency step} of $\mfg$ and then the Lie algebra $\mfg$ is said to be $s$-\textit{step nilpotent}. 
\begin{definition}
Let $\mfg$ be an $s$-step nilpotent Lie algebra $\mfg$ and set $m_k=\dim\mfg_k$ for $1\leq k\leq s$. The \emph{ascending type} of $\mfg$ is $(m_1,\ldots,m_s)$.
\end{definition}
Let $J\colon\mfg\to\mfg$ be a complex structure on the nilpotent Lie algebra $\mfg$. In general, the ideals $\mfg_k$ in the ascending central series are not $J$-invariant, see \cite{Cordero-Fernandez-Gray-Ugarte}. In order to adapt the ascending central series to the existence of a complex structure $J$, the following definitions are introduced in \cite{Cordero-Fernandez-Gray-Ugarte, Latorre-Ugarte-Villacampa}.
\begin{definition}
Let $(\mfg,J)$ be a nilpotent Lie algebra endowed with a complex structure. The \emph{ascending $J$-compatible series} $\left\lbrace \mfa_k(J)\right\rbrace_k$ of $\mfg$ is recursively defined by $\mfa_0(J)=\left\lbrace 0\right\rbrace$ and
\[
\mfa_k(J)=\left\lbrace X\in\mfg\ |\ \left[X,\mfg\right]\subseteq\mfa_{k-1}(J), \left[JX,\mfg \right]\subseteq \mfa_{k-1}(J) \right\rbrace \textrm{ for }k\geq 1\,.
\]
\end{definition}
Now we are ready to define different natural classes of complex structures on nilpotent Lie algebras according to how the ascending $J$-compatible series
looks like:
\begin{definition}
A complex structure $J$ on a nilpotent Lie algebra $\mfg$ is said to be
\begin{itemize}
  \item[(i)] \textit{strongly non-nilpotent} (SnN) if $\mfa_1(J)=\left\lbrace 0\right\rbrace$;
  \item[(ii)] \textit{quasi-nilpotent} if it satisfies $\mfa_1(J)\neq\left\lbrace 0\right\rbrace$; moreover $J$ will be called
  \begin{itemize}
    \item[(ii.1)] \textit{nilpotent}, if there exists some integer $t>0$ such that $\mfa_t(J)=\mfg$,
    \item[(ii.2)] \textit{weakly non-nilpotent}, if there is an integer $t>0$ such that $\mfa_\ell(J)=\mfa_t(J)$ for all $\ell\geq t$, and $\mfa_t(J)\neq\mfg$.
  \end{itemize}
\end{itemize}
\end{definition}

As explained in \cite{Latorre-Ugarte-Villacampa}, quasi-nilpotent complex structures on a nilpotent Lie algebra of a given dimension can be reduced to complex structures on a nilpotent Lie algebra of lower dimension; in particular, notice that $\mfa_1(J)=\mfz(\mfg)\cap J\mfz(\mfg)$ is central and $J$-invariant. On the contrary, strongly non-nilpotent complex structures are ``genuine'' in each dimension, and constitute the building blocks of complex structures on nilpotent Lie algebras of higher dimension.

The following result contained in \cite{Cordero-Fernandez-Gray-Ugarte}
gives a characterization of complex structures of nilpotent type:

\begin{thm}\label{nilpotent-J}
Let $(\mfg,J)$ be a $2n$-dimensional nilpotent Lie algebra endowed with a complex structure. $J$ is nilpotent if and only if there is a basis $\{\varphi^i\}_{i=1}^n$ of $\mfg^{*(1,0)}$ such that the complex structure equations of $(\mfg,J)$ satisfy
\[
d\varphi^i=\sum_{j<k<i} A^i_{jk}\,\varphi^j\wedge\varphi^k
+ \sum_{j,k<i} B^i_{jk}\,\varphi^j\wedge\overline{\varphi^k}, 
\quad 1\leq i \leq n\,, 
\]
where $A^i_{jk}$, $B^i_{jk}$ are complex numbers.
\end{thm}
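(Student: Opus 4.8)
The plan is to prove Theorem~\ref{nilpotent-J} by relating the $J$-nilpotency condition to the structure of the Chevalley--Eilenberg differential acting on the bigraded complex $\Lambda^{\bullet,\bullet}\mfg^*_\bC$ associated with $J$. First I would set up notation: since $J$ is a complex structure, integrability gives $d\alpha\in\Lambda^{2,0}\oplus\Lambda^{1,1}$ for $\alpha\in\mfg^{*(1,0)}$, so there are no $(0,2)$-components; write $d=\partial+\overline\partial$ accordingly on $\Lambda^{1,0}$, where $\partial\colon\mfg^{*(1,0)}\to\Lambda^{2,0}$ and $\overline\partial\colon\mfg^{*(1,0)}\to\Lambda^{1,1}$. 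The claimed equations say precisely that there is a basis $\{\varphi^i\}$ of $\mfg^{*(1,0)}$ which is simultaneously \emph{strictly upper triangular} for both $\partial$ and $\overline\partial$: the $(2,0)$-part of $d\varphi^i$ involves only $\varphi^j\wedge\varphi^k$ with $j,k<i$, and the $(1,1)$-part only $\varphi^j\wedge\overline{\varphi^k}$ with $j,k<i$. So the theorem is a simultaneous-flag statement, and the natural object to build the flag from is the ascending $J$-compatible series $\{\mfa_k(J)\}$.

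For the direction ``$J$ nilpotent $\Rightarrow$ such a basis exists'', I would use that nilpotency of $J$ means $\mfa_t(J)=\mfg$ for some $t$, so the subspaces $\mfa_k(J)$ form a $J$-invariant flag $\{0\}=\mfa_0\subsetneq\mfa_1\subsetneq\cdots\subsetneq\mfa_t=\mfg$ with $[\mfa_k,\mfg]\subseteq\mfa_{k-1}$. Dualizing, the annihilators $\mfa_k(J)^0\subseteq\mfg^*$ give a descending flag, and the $(1,0)$-parts $W_k:=(\mfa_k(J)^0)^{(1,0)}$ (or rather, the pieces $\Lambda^{1,0}$ vanishing on $\mfa_k(J)_\bC$) form a flag in $\mfg^{*(1,0)}$; the key point is that because $\mfa_k(J)$ is $J$-invariant and satisfies the bracket condition, the CE differential of a $(1,0)$-form vanishing on $\mfa_{k}(J)$ lands in $\Lambda^2$ of forms vanishing on $\mfa_{k-1}(J)$, which after splitting into $(2,0)$ and $(1,1)$ parts gives exactly the strict-triangularity for both $\partial$ and $\overline\partial$ once one picks a basis $\{\varphi^i\}$ adapted to the flag (so that $\varphi^1,\dots,\varphi^{m_1/2}$ span the annihilator-type space associated with $\mfa_1$, etc.). One has to check carefully that $J$-invariance of $\mfa_k$ is what kills a potential $\varphi^j\wedge\overline{\varphi^k}$ term with $k\geq i$: this uses that if $X\in\mfa_{k-1}(J)$ then $JX\in\mfa_{k-1}(J)$ too, so both $[X,\cdot]$ and $[JX,\cdot]$ respect the flag, which translates into the $\overline\partial$-triangularity.

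For the converse, ``such a basis exists $\Rightarrow$ $J$ nilpotent'', I would run the argument backwards: given the triangular equations, define $\mfb_k\subseteq\mfg$ to be the real subspace whose complexification is annihilated by $\varphi^{k+1},\ldots,\varphi^n,\overline{\varphi^{k+1}},\ldots,\overline{\varphi^n}$ (equivalently spanned by the duals of $\varphi^1,\dots,\varphi^k$ and their conjugates); this is manifestly $J$-invariant. The triangular shape of $d\varphi^i$ forces $[\mfb_k,\mfg]\subseteq\mfb_{k-1}$ and $[J\mfb_k,\mfg]=[\mfb_k,\mfg]\subseteq\mfb_{k-1}$, so by induction $\mfb_k\subseteq\mfa_k(J)$; since $\mfb_n=\mfg$, we get $\mfa_n(J)=\mfg$, i.e. $J$ is nilpotent. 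I expect the main obstacle to be the bookkeeping in the forward direction: making the passage between ``$J$-invariant ideal with bracket condition'' and ``strictly triangular for \emph{both} $\partial$ and $\overline\partial$ in a single common basis'' fully rigorous, in particular checking that one can choose one basis of $\mfg^{*(1,0)}$ that is adapted to the flag $\{\mfa_k(J)\}$ simultaneously for the holomorphic and antiholomorphic parts of $d$, and verifying the index constraints $j<k<i$ (antisymmetry lets one order $j<k$ in the $(2,0)$ term) versus $j,k<i$ in the $(1,1)$ term. The nilpotency-step/dimension count $m_k=\dim_\bR\mfa_k(J)$ and its relation to how many $\varphi^i$ are ``new'' at each level is the routine part, and since this result is quoted from \cite{Cordero-Fernandez-Gray-Ugarte} I would also just cite it there rather than reproving every detail.
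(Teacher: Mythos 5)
The paper does not actually prove Theorem~\ref{nilpotent-J}; it is quoted from \cite{Cordero-Fernandez-Gray-Ugarte} without proof, so there is no in-paper argument to compare against. Your strategy --- encoding nilpotency of $J$ in the $J$-invariant flag $\{\mfa_k(J)\}$, dualizing to a flag of $(1,0)$-forms, and reading off the triangular structure equations, then reversing the construction for the converse --- is exactly the standard proof from that reference, and it does work. One worry you raise resolves itself: since each $\mfa_k(J)$ is $J$-invariant, the annihilator of $(\mfa_k(J))_\bC$ inside $\mfg^*_\bC$ splits as $G_k\oplus\overline{G_k}$ with $G_k\coloneqq\Lambda^{1,0}\cap\mathrm{Ann}\big((\mfa_k(J))_\bC\big)$, so a single basis of $\mfg^{*(1,0)}$ adapted to the flag $\{G_k\}$ is automatically triangular for $\partial$ and $\overline\partial$ simultaneously; no separate compatibility check is needed.

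However, both halves of your sketch have the filtration oriented backwards, and in the converse this yields a false intermediate claim. Forward direction: for $\alpha\in G_{k-1}$ (a $(1,0)$-form vanishing on $\mfa_{k-1}(J)$) and $X\in\mfa_k(J)$ one has $d\alpha(X,\cdot)=-\alpha([X,\cdot])=0$ because $[\mfa_k(J),\mfg]\subseteq\mfa_{k-1}(J)$; hence $dG_{k-1}\subseteq\Lambda^2\big(G_k\oplus\overline{G_k}\big)$, not the reverse inclusion you state (which is true but vacuous, since $G_{k-1}\supseteq G_k$). The filtration $\mfg^{*(1,0)}=G_0\supseteq G_1\supseteq\cdots\supseteq G_t=0$ is decreasing and $d$ pushes forms \emph{deeper} into it, so the basis must be ordered starting from the deepest nonzero piece $G_{t-1}$ (the closed $(1,0)$-forms), which then carry the lowest indices. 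Converse: with your $\mfb_k$ (the joint kernel of $\varphi^{k+1},\ldots,\varphi^n$ and their conjugates, i.e.\ the span of the real and imaginary parts of the dual vectors $Z_1,\ldots,Z_k$) the inclusion $[\mfb_k,\mfg]\subseteq\mfb_{k-1}$ fails: already for $\mfh_3^\bC$ with $d\varphi^1=d\varphi^2=0$, $d\varphi^3=-\varphi^{12}$ one has $\varphi^3([Z_1,Z_2])=\varphi^{12}(Z_1,Z_2)\neq 0$, so $Z_1$ is not central, whereas $Z_3$ is. The triangular equations make the \emph{high}-index dual vectors central: taking instead $\mfb_k$ to be the joint kernel of $\varphi^1,\ldots,\varphi^{n-k}$ and their conjugates, the index constraint $j,l<i$ in $d\varphi^i$ gives $\varphi^i([\mfb_k,\mfg])=0$ for all $i\leq n-k+1$, hence $[\mfb_k,\mfg]\subseteq\mfb_{k-1}$, and the induction $\mfb_k\subseteq\mfa_k(J)$, $\mfb_n=\mfg$, closes the argument. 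With these two reversals your outline is a complete proof; in the paper itself one would, as you suggest, simply cite \cite{Cordero-Fernandez-Gray-Ugarte}.
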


For the sake of simplicity, from now on we will denote by $\varphi^{jk}$ the wedge product $\varphi^j\wedge\varphi^k$
and by~$\varphi^{\overline{k}}$ the complex conjugate $\overline{\varphi^k}$.

The next corollary summarizes the extent to which reduction and oxidation allow to describe complex symplectic structures on nilpotent Lie algebras.
\begin{corollary}\label{co:redox_nilpotent}
Every complex symplectic nilpotent Lie algebra $(\mfg,J,\omega)$ of dimension $4n+4$ with quasi-nilpotent complex structure $J$ is the oxidation of a complex symplectic Lie algebra $(\bar\mfg,\bar J,\bar\omega)$ of dimension $4n$.
\end{corollary}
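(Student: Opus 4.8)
The plan is to recognize Corollary \ref{co:redox_nilpotent} as an immediate consequence of Corollary \ref{co:redox-2}: the latter reduces the question of whether a $(4n+4)$-dimensional complex symplectic Lie algebra $(\mfg,J,\omega)$ is the oxidation of a $4n$-dimensional one to the single condition $\mfz(\mfg)\cap J\mfz(\mfg)\neq\{0\}$. So the entire task is to show that a quasi-nilpotent complex structure forces this intersection to be non-trivial.

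First I would identify the first term of the ascending $J$-compatible series. Unwinding the definition and using $\mfa_0(J)=\{0\}$,
\[
\mfa_1(J)=\{X\in\mfg \mid [X,\mfg]=0,\ [JX,\mfg]=0\}=\mfz(\mfg)\cap J^{-1}\mfz(\mfg)\,,
\]
and since $J^2=-\id$ one has $J^{-1}\mfz(\mfg)=J\mfz(\mfg)$, so $\mfa_1(J)=\mfz(\mfg)\cap J\mfz(\mfg)$ --- the identification already recorded after the definition of the ascending $J$-compatible series. Since $J$ is quasi-nilpotent, by definition $\mfa_1(J)\neq\{0\}$, and therefore $\mfz(\mfg)\cap J\mfz(\mfg)\neq\{0\}$.

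Applying Corollary \ref{co:redox-2} then gives precisely that $(\mfg,J,\omega)$ is the complex symplectic oxidation of some $4n$-dimensional complex symplectic Lie algebra $(\bar\mfg,\bar J,\bar\omega)$, as claimed. If one prefers to exhibit the reduction directly: pick $0\neq X\in\mfa_1(J)$ and set $\mfa\coloneqq\spa{X,JX}$; this is a $J$-invariant central ideal, it has real dimension exactly $2$ because $J$ has no real eigenvalues, and it is $\omega$-isotropic by Claim \ref{cl:centralJinvariant2Disotropic}, so Lemma \ref{le:complexsymplecticreduction} produces the complex symplectic Lie algebra $\bar\mfg\coloneqq\mfa^\perp/\mfa$ of dimension $(4n+4)-4=4n$, of which, by Corollary \ref{co:redox-2}, $(\mfg,J,\omega)$ is the complex symplectic oxidation. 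There is no genuine obstacle here; the only point requiring a moment's attention is that a nonzero $J$-invariant subspace is automatically even-dimensional, so that passing to $\mfa^\perp/\mfa$ really lowers the dimension by $4$ and not by $2$.
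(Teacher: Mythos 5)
Your proof is correct and follows exactly the paper's own argument: the paper also deduces the corollary directly from Corollary \ref{co:redox-2} together with the identity $\mfz(\mfg)\cap J\mfz(\mfg)=\mfa_1(J)\neq\{0\}$ for quasi-nilpotent $J$. The extra explicit construction of the reducing ideal $\mfa=\spa{X,JX}$ is a harmless elaboration of what Corollary \ref{co:redox-2} already packages.
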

\begin{proof}
 This follows immediately from Corollary \ref{co:redox-2} and the fact that if $J$ is quasi-nilpotent, then $\mfz(\mfg)\cap J\mfz(\mfg)=\mfa_1(J)\neq 0$.
\end{proof}


Our next goal is to show that if $(\mfg,J,\omega)$ is a complex symplectic nilpotent Lie algebra of dimension $4$ and $8$, then $J$ is necessarily quasi-nilpotent.

\subsection{Nilpotent complex symplectic Lie algebras in dimension four}
First of all, note that there are only three four-dimensional nilpotent Lie algebras,
namely, $\bR^4$, $\mfh_3\oplus \bR$ and $\mfn_4$. The non-zero Lie brackets of $\mfh_3\oplus \bR$ and $\mfn_4$ are, in terms of a basis $\{e_1,\ldots,e_4\}$, $[e_1,e_2]=e_3$ and $[e_1,e_4]=e_2,\, [e_2,e_4]=e_3$, respectively. Now \cite{Snow} shows that of these only $\bR^4$ and $\mfh_3\oplus \bR$ have an integrable almost complex structure, which is given, up to equivalence, in both cases by $J_0=e^2\otimes e_1-e^1\otimes e_2+ e^4\otimes e_3-e^3\otimes e_4$, where
$\{e^1,\ldots,e^4\}$ denotes the dual basis of $\{e_1,\ldots,e_4\}$. Next, observe that any
$\omega\in \Lambda^2\mfg^*$, $\mfg\in\{\bR^4,\mfh_3\oplus \bR\}$, for which $(J_0,\omega)$ is a complex
symplectic Lie algebra, is a non-degenerate real form of type $(2,0)$ and $(0,2)$ and so given by
\begin{equation*}
\omega:=\omega(z):=\Re(z) (e^{14}+e^{23})+\Im(z)(e^{13}-e^{24})
\end{equation*}
for some $z\in\bC\setminus \{0\}$. Note that $\omega(z)$ is closed for all values of $z$ in both cases.
For $w\in \bC\setminus \{0\}$, 
consider the vector space automorphism $F(w)$ of $\mfg$ given with respect to the above basis by
\begin{equation*}
F(w):=\left(\begin{smallmatrix} \Re(w) &  - \Im(w) & 0 & 0\\ \Im(w) &  \Re(w) & 0 & 0\\  0 & 0 & \left|w\right|^2 & 0 \\ 0 & 0 &  0 &\left|w\right|^2 \\\end{smallmatrix}\right).
\end{equation*}
One easily checks that $F(w)$ is in both cases a Lie algebra automorphism, preserves $J_0$ and that
$F(w)^* \omega(z)=\omega(wz \left|w\right|^2)$. Hence, one may find $w\in \bC\setminus \{0\}$ such that 
\begin{equation*}
F(w)^*\omega(z)=\omega(1)=e^{14}+e^{23}=:\omega_0
\end{equation*}
i.e. $(\mfg,J_0,\omega(z))$ is isomorphic to $(\mfg,J_0,\omega_0)$. Thus, with respect to the above basis $\{e_1,\ldots,e_4\}$ of $\mfg$, 
we have shown:
\begin{proposition}\label{pro:4dcomplexsymplectic}
Let $(\mfg,J,\omega)$ be a four-dimensional nilpotent complex symplectic Lie algebra. Then $(\mfg,J,\omega)$ is isomorphic to $(\mfh_3\oplus \bR,J_0,\omega_0)$
or to $(\bR^4,J_0,\omega_0)$ with
\begin{equation*}
 J_0=e^2\otimes e_1-e^1\otimes e_2+ e^4\otimes e_3-e^3\otimes e_4 \quad \textrm{and} \quad \omega_0=e^{14}+e^{23}.
\end{equation*}
\end{proposition}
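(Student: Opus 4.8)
The plan is to reduce the classification to a small, explicit computation in dimension four. First I would recall that, up to isomorphism, the only four-dimensional nilpotent Lie algebras are $\bR^4$, $\mfh_3\oplus\bR$ and $\mfn_4$; this is classical. Then I would invoke Snow's classification \cite{Snow} of complex structures on four-dimensional solvable Lie algebras to conclude that among these three, only $\bR^4$ and $\mfh_3\oplus\bR$ carry an integrable almost complex structure, and that in each case there is, up to equivalence (i.e.\ up to the action of $\mathrm{Aut}(\mfg)$), a unique such $J$, which we may take to be $J_0=e^2\otimes e_1-e^1\otimes e_2+e^4\otimes e_3-e^3\otimes e_4$ in a suitable basis $\{e_1,\dots,e_4\}$. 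Crucially, for $\mfh_3\oplus\bR$ one must check that the given basis, in which $[e_1,e_2]=e_3$, is compatible with this normal form of $J_0$; a direct verification that $N_{J_0}=0$ for these structure equations settles it.

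Next I would determine, for each $\mfg\in\{\bR^4,\mfh_3\oplus\bR\}$, which two-forms $\omega\in\Lambda^2\mfg^*$ make $(J_0,\omega)$ a complex symplectic structure. By Lemma~\ref{le:equivalencecomplexsymplectic} (or directly from the $(2,0)$-condition on $\omega_\bC$), $\omega$ must be a non-degenerate real two-form whose complexification is of type $(2,0)+(0,2)$ with respect to $J_0$, i.e.\ $\omega(J_0\cdot,J_0\cdot)=-\omega$. Writing out this condition in the basis $\{e^1,\dots,e^4\}$ gives a linear system whose solution space is two-real-dimensional, parametrized as
\[
\omega(z)=\Re(z)\,(e^{14}+e^{23})+\Im(z)\,(e^{13}-e^{24}),\qquad z\in\bC\setminus\{0\},
\]
the non-degeneracy forcing $z\neq 0$. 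One then checks that $\omega(z)$ is $d$-closed for every $z$: for $\bR^4$ this is automatic, and for $\mfh_3\oplus\bR$ it follows since $de^3=-e^{12}$ is the only nonzero differential and $e^{12}$ does not appear in $\omega(z)$.

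The final step is to normalize the parameter $z$ by an automorphism preserving $J_0$. I would introduce the one-parameter family $F(w)$, $w\in\bC\setminus\{0\}$, acting on $\mathrm{span}(e_1,e_2)$ as multiplication by $w\in\bC\cong\mathrm{span}(e_1,e_2)$ and on $\mathrm{span}(e_3,e_4)$ as multiplication by $|w|^2$, and verify three things: $F(w)$ is a Lie algebra automorphism of both $\bR^4$ and $\mfh_3\oplus\bR$ (for the latter, $F(w)[e_1,e_2]=|w|^2e_3=[F(w)e_1,F(w)e_2]$ works out because the scaling on $e_3$ matches the product of the scalings on $e_1,e_2$), that it commutes with $J_0$, and that $F(w)^*\omega(z)=\omega(w\,z\,|w|^2)$ — or whatever the precise exponent is; the point is only that the induced action on the parameter $z$ is transitive on $\bC\setminus\{0\}$, which is clear since $w\mapsto w|w|^2$ is a surjection of $\bC^*$ onto itself. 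Choosing $w$ so that the new parameter equals $1$ yields $\omega_0=e^{14}+e^{23}$, and hence $(\mfg,J_0,\omega(z))\cong(\mfg,J_0,\omega_0)$. Combining with the uniqueness of $J_0$ up to $\mathrm{Aut}(\mfg)$ gives the stated classification.

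The main obstacle is not any single hard computation but rather the bookkeeping around \emph{equivalence versus isomorphism}: one must be careful that "unique $J$ up to equivalence" from \cite{Snow} interacts correctly with the subsequent normalization of $\omega$, so that the final statement is about isomorphism classes of the triple $(\mfg,J,\omega)$ and not just of $(\mfg,J)$. Concretely, the subtle point is to ensure that after fixing $J=J_0$ in a normal-form basis, the residual freedom is exactly the stabilizer of $J_0$ in $\mathrm{Aut}(\mfg)$, and that this stabilizer contains enough elements (namely the $F(w)$) to kill the modulus $z$. Everything else — the list of four-dimensional NLAs, the Nijenhuis check, the linear algebra describing the space of compatible $\omega$, and the closedness of $\omega(z)$ — is routine.
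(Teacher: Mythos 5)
Your proposal is correct and follows essentially the same route as the paper: cite the classification of four-dimensional NLAs and Snow's result to reduce to $(\bR^4,J_0)$ and $(\mfh_3\oplus\bR,J_0)$, identify the compatible closed forms as the family $\omega(z)$, and normalize $z$ to $1$ via the $J_0$-preserving automorphisms $F(w)$ acting on the parameter by $z\mapsto wz|w|^2$. The paper's argument is word-for-word this one, including the same $F(w)$ and the same surjectivity observation, so there is nothing to add.
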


\begin{remark}
The subspace $\la e_3,e_4\ra$ is $J_0$-invariant and central both in $\bR^4$ and $\mfh_3\oplus\bR$. By Corollary \ref{co:redox_nilpotent}, one can obtain $(\bR^4,J_0,\omega_0)$ and $(\mfh_3\oplus \bR,J_0,\omega_0)$ from the trivial complex symplectic Lie algebra $\{0\}$. More precisely, the complex symplectic oxidation data $(f,S,\tau)$ on $\{0\}$ always satisfies $f=0$ and $S=0$ and one gets, up to isomorphism, $(\bR^4,J_0,\omega_0)$ for $\tau=0$ and $(\mfh_3\oplus \bR,J_0,\omega_0)$ for $\tau\neq 0$.
\end{remark}

\subsection{Nilpotent complex symplectic Lie algebras in dimension eight}
In \cite{Latorre-Ugarte-Villacampa}, the authors obtained the following result on 8-dimensional nilpotent Lie algebras endowed with a strongly non-nilpotent complex structure:

\begin{thm}\label{SnN-8-d}
Let $J$ be a strongly non-nilpotent complex structure on an 8-dimen\-sional nilpotent Lie algebra $\mfg$. Then
\begin{itemize}
\item $\dim\mfz(\mfg)=1$;
\item the ascending type of $\mfg$ belongs to the following list:
\begin{gather*}
(1,3,8), \quad (1,3,5,8), \quad (1,3,5,6,8), \quad (1,3,6,8), \quad (1,4,8),\\ 
(1,4,6,8), \quad (1,5,8), \quad \textrm{or} \quad (1,5,6,8).
\end{gather*}
\end{itemize}

Moreover, there exists a basis $\{\varphi^1, \varphi^2, \varphi^3, \varphi^4\}$ of $\mfg^{*(1,0)}$ in terms of which the complex structure equations of $(\mfg,J)$ are one of the following:
\begin{itemize}
\item[(i)] if the ascending type of $\mfg$ is $(1,3,8)$, $(1,3,6,8)$, $(1,3,5,8)$ or $(1,3,5,6,8)$, then
\[\arraycolsep=1.4pt\def\arraystretch{1.2}
\left\{\begin{array}{rcl}
d\varphi^1 &=& 0,\\
d\varphi^2 &=& A\,\varphi^{1\bar 1} -B(\varphi^{14}- \varphi^{1\bar 4}),\\
d\varphi^3 &=& C\,\varphi^{12}-E\,(\varphi^{14}- \varphi^{1\bar 4})\\
  	&&
	+F\,\varphi^{1\bar 1}+D\,\varphi^{1\bar 2} - H\,(\varphi^{24}-\varphi^{2\bar 4}) +G\,\varphi^{2\bar1}+K\,\varphi^{2\bar 2},\\
d\varphi^4 &=& L\,\varphi^{1\bar 1}+M\,\varphi^{1\bar 2}+N\,\varphi^{1\bar 3}\\
  && -\bar{M}\,\varphi^{2\bar 1} +i\,s\,\varphi^{2\bar 2} 
  + P\,\varphi^{2\bar 3}-\bar{N}\,\varphi^{3\bar 1}-\bar{P}\,\varphi^{3\bar2};
 \end{array}\right.
\]


\medskip

\item[(ii)]  if the ascending type is $(1,4,8)$ or $(1,4,6,8)$, then
\[\arraycolsep=1.4pt\def\arraystretch{1.2}
\left\{\begin{array}{rcl}
d\varphi^1 &=& 0,\\
d\varphi^2 &=& A\,\varphi^{1\bar 1},\\
d\varphi^3 &=& -D (\varphi^{12}-\varphi^{1\bar 2})-E\,(\varphi^{14}-\varphi^{1\bar 4})+F\,\varphi^{1\bar 1},\\
d\varphi^4 &=& L\,\varphi^{1\bar 1}+M\,\varphi^{1\bar 2}+N\,\varphi^{1\bar 3}-\bar{M}\,\varphi^{2\bar 1}
  +i\,s\,\varphi^{2\bar 2}-\bar{N}\,\varphi^{3\bar 1};
\end{array}\right.
\]


\item[(iii)]  if the ascending type is $(1,5,8)$ or $(1,5,6,8)$, then
\[\arraycolsep=1.4pt\def\arraystretch{1.2}
\left\{\begin{array}{rcl}
d\varphi^1  &=& 0,\\
d\varphi^2  &=& A\,\varphi^{1\bar 1} -B(\varphi^{14}- \varphi^{1\bar 4}),\\
d\varphi^3  &=& F\,\varphi^{1\bar 1} -E(\varphi^{14}- \varphi^{1\bar 4}),\\
d\varphi^4  &=& L\,\varphi^{1\bar 1}+M\,\varphi^{1\bar 2}+N\,\varphi^{1\bar 3}\\
	&& -\bar{M}\,\varphi^{2\bar 1}+i\,s\,\varphi^{2\bar 2}+P\,\varphi^{2\bar 3} 
  	-\bar{N}\,\varphi^{3\bar 1} -\bar P\,\varphi^{3\bar 2}+i\,t\,\varphi^{3\bar 3}.
\end{array}\right.
\]

\end{itemize}
All coefficients are complex numbers except for $s, t$, which are real. Additionally the coefficients must be such that $d^2=0$, which is equivalent to the Jacobi identity.
\end{thm}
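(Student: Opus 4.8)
The plan is to study how a strongly non-nilpotent complex structure $J$ interacts with the ascending central series $\{\mfg_k\}$ of $\mfg$. By hypothesis $\mfa_1(J)=\mfz(\mfg)\cap J\mfz(\mfg)=\{0\}$, so $\mfz(\mfg)$ is not $J$-invariant; since $\mfg$ is nilpotent and non-trivial, $\dim\mfz(\mfg)\geq 1$, and the first task is to prove equality. I would argue by contradiction. If $\dim\mfz(\mfg)\geq 2$, then $\mfz(\mfg)+J\mfz(\mfg)$ is a $J$-invariant subspace of dimension $\geq 4$; combining this with the general facts that $[\mfg,\mfg]$ has codimension $\geq 2$ in a non-abelian nilpotent Lie algebra and that $\mfz(\mfg)\cap[\mfg,\mfg]\neq\{0\}$, one pins down the few possible relative positions of $\mfz(\mfg)$, $J\mfz(\mfg)$ and $[\mfg,\mfg]$ inside the $8$-dimensional space and checks, case by case (treating $\dim\mfz(\mfg)\in\{2,3,4\}$ in turn), that each configuration forces a nonzero $J$-invariant line in $\mfz(\mfg)$, i.e.\ $\mfa_1(J)\neq\{0\}$, contradicting the SnN hypothesis. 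This yields $\dim\mfz(\mfg)=1$.

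Next, with $\dim\mfg_1=1$ fixed, I would determine the ascending type $(m_1,\ldots,m_s)$, $m_k=\dim\mfg_k$, by a step-by-step propagation argument. Although $J$ need not preserve the $\mfg_k$, it controls the admissible dimension jumps: writing $\mfg_k':=\mfg_k+J\mfg_k$ for the $J$-invariant hull and using the vanishing of the Nijenhuis tensor (equivalently integrability of $J$), one obtains inequalities relating $m_{k+1}-m_k$, $\dim\mfg_k'$ and $\dim(\mfg_k'\cap[\mfg,\mfg])$. Starting from $m_1=1$ and iterating until $\mfg_s=\mfg$ (so $m_s=8$), this forces $m_2\in\{3,4,5\}$ and then restricts each admissible continuation, producing exactly the eight sequences in the list.

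For the third part — the normal forms — I would, for each admissible ascending type, fix a basis $\{\varphi^1,\varphi^2,\varphi^3,\varphi^4\}$ of $\mfg^{*(1,0)}$ adapted to the $J$-invariant filtration dual to $\{\mfg_k'\}$, so that $d\varphi^1=0$ and each $d\varphi^i$ only involves forms of strictly lower index, much as in Theorem~\ref{nilpotent-J} but without full nilpotency of $J$. Imposing that $d\varphi^i$ has no component in $\Lambda^2\mfg^{*(0,1)}$ (integrability) together with $d^2=0$ then cuts the a priori free coefficients down to those displayed in (i)--(iii), and finally one uses the action of $\Aut(\mfg,J)$ to normalize, in particular to scale certain coefficients to $0$ or $1$ and to make $s,t$ real.

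The main obstacle is the exhaustiveness of the two case analyses: ruling out centers of dimension $\geq 2$ in the first part, and ruling out every ascending type outside the list in the second. Both require disciplined bookkeeping of the varying positions of $\mfz(\mfg)$, $J\mfz(\mfg)$, $[\mfg,\mfg]$ and the successive $\mfg_k$ inside an $8$-dimensional space, with repeated use of the integrability condition written out in coordinates; the coefficient normalization in the third part is comparatively routine but still lengthy, since several sub-cases arising within a single ascending type must be reconciled into one canonical family.
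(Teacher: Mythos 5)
This theorem is not proved in the paper at all: it is quoted verbatim from \cite{Latorre-Ugarte-Villacampa} (``In \cite{Latorre-Ugarte-Villacampa}, the authors obtained the following result\dots''), so there is no internal proof to compare your attempt against, and any self-contained argument would have to reproduce the lengthy classification carried out in that reference.

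Judged on its own terms, your text is a plan rather than a proof, and the plan has one concrete error. In step 3 you propose a basis of $\mfg^{*(1,0)}$ adapted to a $J$-invariant filtration ``so that $d\varphi^1=0$ and each $d\varphi^i$ only involves forms of strictly lower index, much as in Theorem~\ref{nilpotent-J}.'' Look at the normal forms you are trying to derive: in case (i), $d\varphi^2$ contains $\varphi^{14}$ and $\varphi^{1\bar4}$, and $d\varphi^3$ contains $\varphi^{24}$ and $\varphi^{2\bar4}$; the \emph{highest}-index generator $\varphi^4$ appears in the differentials of the lower ones. This failure of lower-triangularity is exactly the signature of strong non-nilpotency --- if a basis with the property you assert existed (lower-triangular structure equations of the shape in Theorem~\ref{nilpotent-J}), the complex structure would be nilpotent, contradicting the SnN hypothesis. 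So the filtration you propose to dualize cannot be the right one; the correct construction (as in \cite{Latorre-Ugarte-Villacampa}) has to work with bases adapted simultaneously to the ascending central series and to $J$, accepting that $\varphi^4$ re-enters the lower differentials.

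Beyond that, the two case analyses you defer are where all the content lives, and the soft observations you list do not suffice to start them. For the first part, knowing that $\mfz(\mfg)+J\mfz(\mfg)$ is $J$-invariant of dimension $\geq 4$, that $[\mfg,\mfg]$ has codimension $\geq 2$, and that $\mfz(\mfg)\cap[\mfg,\mfg]\neq\{0\}$ does not by itself force a $J$-invariant line inside $\mfz(\mfg)$; ruling out $\dim\mfz(\mfg)\in\{2,3,4\}$ requires writing out the integrability condition and the Jacobi identity in adapted real bases and eliminating many configurations one by one. Similarly, the ``inequalities relating $m_{k+1}-m_k$, $\dim\mfg_k'$ and $\dim(\mfg_k'\cap[\mfg,\mfg])$'' in step 2 are never stated, so the claim that they produce exactly the eight listed ascending types is unverifiable. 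As it stands, the proposal identifies the right objects to track but neither executes nor correctly sets up the computation that constitutes the proof.
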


\begin{remark}\label{nec-but-not-suf}
Mind that the presence of an SnN complex structure $J$ on $\mfg$ implies the existence of a basis for $\mfg^{*(1,0)}$ satisfying part \textrm{(i)}, \textrm{(ii)}, or \textrm{(iii)} of the previous theorem. However, the converse is not true: there are choices of the parameters that might give rise to nilpotent complex structures, according to Theorem~\ref{nilpotent-J}.
\end{remark}

From this parametrization of SnN complex structures on 8-dimensional nilpotent Lie algebras, one can deduce the following result:

\begin{proposition}\label{hs-implica-nilp}
An 8-dimensional nilpotent Lie algebra endowed with an SnN complex structure does not admit a complex symplectic structure.
\end{proposition}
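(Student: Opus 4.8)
The plan is to assume, for contradiction, that an $8$-dimensional nilpotent Lie algebra $\mfg$ carries both an SnN complex structure $J$ and a complex symplectic structure $(J',\omega)$ — actually we only need to rule out $(J,\omega)$ compatible with the \emph{given} SnN $J$, so fix $J$ as in one of the normal forms (i), (ii), (iii) of Theorem~\ref{SnN-8-d} and suppose there is a symplectic form $\omega$ on $\mfg$ that is symmetric with respect to $J$. First I would translate the symmetry condition $\omega(JX,Y)=\omega(X,JY)$ into the statement that the associated complex $2$-form $\omega_\bC=\omega-i\omega(J\cdot,\cdot)$ is of type $(2,0)$, i.e. $\omega_\bC\in\Lambda^2\mfg^{*(1,0)}$, together with the closedness $d\omega_\bC=0$ and non-degeneracy $\omega_\bC^2\neq 0$. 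Writing $\omega_\bC=\sum_{i<j}\lambda_{ij}\,\varphi^{ij}$ with $\lambda_{ij}\in\bC$ and $\{\varphi^i\}$ the basis of $\mfg^{*(1,0)}$ from Theorem~\ref{SnN-8-d}, non-degeneracy becomes the single inequality $\lambda_{12}\lambda_{34}-\lambda_{13}\lambda_{24}+\lambda_{14}\lambda_{23}\neq 0$.

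Next I would impose $d\omega_\bC=0$ using the structure equations. The key structural feature, which Theorem~\ref{SnN-8-d} hands us for free, is that $d\varphi^1=0$ while $d\varphi^2,d\varphi^3,d\varphi^4$ all lie in the ideal generated by $\varphi^1$ (one checks this case by case: every term in $d\varphi^2,d\varphi^3,d\varphi^4$ contains a factor $\varphi^1$, $\varphi^{\bar 1}$, or — in the $(1,5,8)$-type — also $\varphi^{\bar 2},\varphi^{\bar 3}$, but the $(2,0)$-part of $d\varphi^i$ always contains $\varphi^1$). Consequently $d$ of any $(2,0)$-form $\varphi^{ij}$ with $i,j\geq 2$ produces a $(3,0)$-part divisible by $\varphi^1$ and a $(2,1)$-part, and the vanishing of the $(3,0)$-part of $d\omega_\bC$ plus the vanishing of its $(2,1)$-part will force several $\lambda_{ij}$ (in particular, I expect, $\lambda_{23},\lambda_{24},\lambda_{34}$ and then $\lambda_{13},\lambda_{14}$) to be zero, leaving only $\lambda_{12}\varphi^{12}$ or similar, which violates non-degeneracy. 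Concretely: since $\varphi^1$ divides $d\varphi^i$ for $i\geq 2$, expand $d\omega_\bC$ and extract the coefficient of each $\varphi^{1jk}$ and $\varphi^{1j\bar k}$; these are linear in the $\lambda_{ij}$ with coefficients the structure constants $A,B,\ldots$, and one reads off a chain of forced vanishings.

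The cleanest way to organize the computation is to handle the three families (i), (ii), (iii) of Theorem~\ref{SnN-8-d} separately but uniformly: in each family the $(2,0)$-parts of the structure equations are rather sparse, so the linear system for the $\lambda_{ij}$ coming from $d\omega_\bC=0$ has few equations but they are enough. For instance, in family (ii), $d\varphi^2=A\varphi^{1\bar1}$ has no $(2,0)$-part, $d\varphi^3$ has $(2,0)$-part $-D\varphi^{12}-E\varphi^{14}$, and $d\varphi^4$ has no $(2,0)$-part; computing $d\omega_\bC$ and collecting the $\varphi^{1\cdot}$-terms of type $(3,0)$ and $(2,1)$ kills the needed $\lambda_{ij}$. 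One must be a little careful that the parameters in Theorem~\ref{SnN-8-d} are constrained by $d^2=0$ (Jacobi), and that SnN-ness itself (i.e. $\mfa_1(J)=0$) restricts them further, as flagged in Remark~\ref{nec-but-not-suf}; so along the way I may need to invoke that $J$ being genuinely SnN forces certain parameters (those governing whether $\mfa_1(J)\neq 0$) to be nonzero, and it is precisely those nonzero parameters that make the coefficients in the linear system non-vanishing and hence force $\lambda_{ij}=0$.

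The main obstacle I anticipate is purely bookkeeping: there are three normal forms, each with up to a dozen complex parameters subject to Jacobi, and for each one must carefully extract the right components of $d\omega_\bC$ and argue that the SnN hypothesis guarantees the relevant structure constants are nonzero (otherwise $J$ would be quasi-nilpotent by the characterization preceding Theorem~\ref{SnN-8-d}). So the real content is: (a) SnN $\Rightarrow$ certain parameters nonzero; (b) those nonzero parameters appear as coefficients in $d\omega_\bC=0$ forcing enough $\lambda_{ij}=0$; (c) the surviving $\omega_\bC$ is degenerate, a contradiction. Once (a) is pinned down, (b) and (c) are short linear algebra in each of the three cases.
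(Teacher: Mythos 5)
Your overall strategy -- reduce to the three normal forms of Theorem~\ref{SnN-8-d}, write $\omega_\bC=\sum\lambda_{ij}\varphi^{ij}$, impose $d\omega_\bC=0$ and the non-degeneracy inequality, and derive a contradiction -- is exactly the paper's. But there is a genuine gap in how you expect the argument to close. The equations coming from $d\omega_\bC=0$ are products of the form $(\text{structure constant})\cdot\lambda_{ij}=0$ (e.g.\ $H\zeta=0$, $N\zeta=0$, $E\zeta+B\theta=0$ in family (i)), so each one only yields a dichotomy, and you cannot decide a priori which factor vanishes. Your plan resolves every dichotomy in favour of ``the $\lambda_{ij}$ vanish, contradicting non-degeneracy,'' on the grounds that SnN-ness pins down certain structure constants as nonzero. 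That is not available: as Remark~\ref{nec-but-not-suf} stresses, the normal forms are only necessary for SnN-ness, and the SnN hypothesis gives at best a disjunction like $(B,E,H)\neq(0,0,0)$, which does not tell you which coefficient in the linear system is nonzero. In fact, in most branches of the actual argument the $\lambda_{ij}$ do \emph{not} all die; instead the closedness equations force enough structure constants to vanish that, after an explicit permutation or change of the basis $\{\varphi^i\}$ (e.g.\ $\tau^1=\varphi^1$, $\tau^2=\varphi^4$, $\tau^3=\varphi^2$, $\tau^4=\varphi^3$ when $\zeta\neq0$ in family (i)), the structure equations satisfy the nilpotency criterion of Theorem~\ref{nilpotent-J} -- so the contradiction is with $J$ being SnN, not with non-degeneracy. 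Producing these changes of basis is the real content of the proof and is absent from your plan; only one branch (family (i), $\zeta=0$, $N\neq0$) ends in a direct clash with non-degeneracy.

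A smaller but concrete error: your structural observation that every $(2,0)$-component of $d\varphi^2,d\varphi^3,d\varphi^4$ contains a factor $\varphi^1$ fails in family (i), where $d\varphi^3$ contains the term $-H\,\varphi^{24}$. This term is not decorative -- the coefficient $H$ is one of the three parameters whose non-vanishing is tied to SnN-ness, and it generates the equations $H\zeta=0$ and $H\beta-E\tau=0$ that drive the hardest subcase. So the ``uniform'' shortcut you hoped for does not exist, and the case analysis must genuinely engage with the $\varphi^{2\ast}$ and $\varphi^{3\ast}$ terms.
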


\begin{proof}
A complex symplectic structure $\omega_\bC$ on an $8$-dimensional nilpotent Lie algebra $\mfg$ endowed
with a complex structure $J$ is given by a $(2,0)$-form, i.e.~an element of $\Lambda^2\mfg^{*(1,0)}$; hence we have
\begin{equation*}
\omega_\bC=\alpha\,\varphi^{12}+\beta\,\varphi^{13}+\gamma\,\varphi^{14}+\tau\,\varphi^{23}
    +\theta\,\varphi^{24}+\zeta\,\varphi^{34},
\end{equation*}
for $\alpha,\beta,\gamma,\tau,\theta,\zeta\in\mathbb C$, $d\omega_\bC=0$ and $\omega_\bC\wedge\omega_\bC\neq 0$. We simply observe that
\begin{equation}\label{no-deg}
\omega_\bC\wedge\omega_\bC\neq 0 \quad \Leftrightarrow \quad \alpha\,\zeta-\beta\,\theta+\tau\,\gamma\neq 0.
\end{equation}

Let us study the condition $d\omega_\bC=0$ when~$(\mfg,J)$ is parametrized
by the complex structure equations given in Theorem~\ref{SnN-8-d}. We will see that
the presence of~$\omega_\bC$ on~$(\mfg,J)$ implies the existence of a basis 
for~$\mfg^{*(1,0)}$ in terms of which the complex structure equations follow 
Theorem~\ref{nilpotent-J}, thus $J$ being nilpotent.

We first focus on those $(\mfg,J)$ parametrized by part \textrm{(i)} of the theorem. 
Since we are interested in SnN complex structures, we can assume
$$(B,E,H)\neq (0,0,0),$$
as otherwise Theorem~\ref{nilpotent-J} directly holds. 
Using the structure equations \textrm{(i)}, we compute
\begin{align*}
 d\omega_\bC &= (H\,\beta + C\,\zeta - E\,\tau)\,\varphi^{124}
	- (G\,\beta - L\,\theta - F\,\tau - \bar M\,\gamma)\,\varphi^{12\bar1} \\
& - (K\,\beta + i\,s\,\gamma - M\,\theta - D\,\tau)\,\varphi^{12\bar2} 
	- (P\,\gamma - N\,\theta)\,\varphi^{12\bar3}- (H\,\beta - E\,\tau)\,\varphi^{12\bar4} \\
&  + B\,\tau\,\varphi^{134} 
	+ (L\,\zeta - A\,\tau + \bar N\,\gamma)\,\varphi^{13\bar1}+ (M\,\zeta + \bar P\,\gamma)\,\varphi^{13\bar2} 
	+ N\,\zeta\,\varphi^{13\bar3}\\
& - B\,\tau\,\varphi^{13\bar 4} - (F\,\zeta + A\,\theta)\,\varphi^{14\bar1} - D\,\zeta\,\varphi^{14\bar2} - (E\,\zeta + B\,\theta)\,\varphi^{14\bar4}  
	\\
& - (\bar M\,\zeta - \bar N\,\theta)\,\varphi^{23\bar1}+ (i\,s\,\zeta + \bar P\,\theta)\,\varphi^{23\bar2} + P\,\zeta\,\varphi^{23\bar3} 
	- G\,\zeta\,\varphi^{24\bar 1}\\
&- K\,\zeta\,\varphi^{24\bar2} - H\,\zeta\,\omega^{24\bar4}\,.
\end{align*}
Imposing $d\omega_\bC=0$, the coefficient of each element $\varphi^{ijk}$ in the expression above, 
that we will denote $[d\omega_\bC]_{ijk}$, must be zero.
In particular, 
\[
[d\omega_\bC]_{13\bar3}=[d\omega_\bC]_{14\bar2}=
[d\omega_\bC]_{23\bar3}=[d\omega_\bC]_{24\bar1}=[d\omega_\bC]_{24\bar2}=
[d\omega_\bC]_{24\bar4}=0\,, 
\]
so we have 
\[
D\zeta=G\zeta=H\zeta=K\zeta=N\zeta=P\zeta=0\,,
\]
and two cases arise. If we assume $\zeta\neq 0$, then $D=G=H=K=N=P=0$ and from
$[d\omega_\bC]_{13\bar2}=[d\omega_\bC]_{23\bar2}=0$ we get $M=s=0$. Replacing
these values in Theorem~\ref{SnN-8-d}, part~\textrm{(i)}, it suffices to define
a new basis 
\[
\tau^1=\varphi^1,\quad \tau^2=\varphi^4,\quad \tau^3=\varphi^2,\quad \tau^4=\varphi^3\,, 
\]
to check that Theorem~\ref{nilpotent-J} is fulfilled. If $\zeta=0$, the non-degeneracy condition \eqref{no-deg} becomes
$\beta\theta-\gamma\tau\neq 0$. In particular, this implies $(\theta,\gamma)\neq (0,0)$
and $(\theta,\tau)\neq (0,0)$. Consequently, from
\[
[d\omega_\bC]_{134}=[d\omega_\bC]_{14\bar4}=0, \qquad
[d\omega_\bC]_{13\bar2}=[d\omega_\bC]_{23\bar2}=0\,, 
\]
we conclude $B=P=0$. Moreover, $[d\omega_\bC]_{23\bar1}=\bar N\,\theta=0$ make us distinguish two different cases:
\begin{itemize}
\item Suppose $N=0$. Then, we observe that 
$d\varphi^4\in\Lambda^2\langle \varphi^1,\varphi^2,\varphi^{\bar1},\varphi^{\bar 2} \rangle$.
Defining
\begin{equation}\label{cambio}
\tau^1=\varphi^1,\quad \tau^2=\varphi^2,\quad \tau^3=\varphi^4,\quad \tau^4=\varphi^3,
\end{equation}
one finds a basis for $\mfg^{*(1,0)}$ for which Theorem~\ref{nilpotent-J} holds.
\item Let $N\neq0$, thus $\theta=0$. Bearing in mind that also $\zeta=0$, \eqref{no-deg} is simply $\gamma\tau\neq 0$. Moreover, as $B=P=0$, the Jacobi identity
$d^2\varphi^4=0$ gives the equations
\[
N\bar K=0, \qquad N\bar H=0.
\]
Consequently, $H=K=0$. As we are interested in the SnN case, recall that one needs
$(B,E,H)\neq (0,0,0)$. Since $B=H=0$, necessarily $E\neq 0$. However, from
$[d\omega_\bC]_{12\bar4}=E\tau=0$ one gets $\tau=0$, but this contradicts the non-degeneracy of $\omega_\bC$.
\end{itemize}
This concludes the proof for those $(\mfg,J)$ parametrized by Theorem~\ref{SnN-8-d}~\textrm{(i)}.

Let us now consider an $8$-dimensional NLA~$\mfg$ endowed with a complex structure~$J$ whose
complex structure equations follow Theorem~\ref{SnN-8-d}~\textrm{(ii)}. We first
observe that $E\neq 0$ to avoid being in the conditions of Theorem~\ref{nilpotent-J}. A direct computation shows
\begin{align*}
 d\omega_\bC &=(-D\,\zeta - E\,\tau)\,\varphi^{124}
	+ (L\,\theta + F\,\tau + \bar M\,\gamma)\,\varphi^{12\bar1}
	 - (i\,s\,\gamma - M\,\theta - D\,\tau)\,\varphi^{12\bar2}\\
	 &+N\,\theta\,\varphi^{12\bar3} + E\,\tau\,\varphi^{12\bar4}
	+ (L\,\zeta - A\,\tau + \bar N\,\gamma)\,\varphi^{13\bar1} 
	+ M\,\zeta\,\varphi^{13\bar2} + N\,\zeta\,\varphi^{13\bar3}\\
	&-(F\,\zeta + A\,\theta)\,\varphi^{14\bar1} 
	- D\,\zeta\,\varphi^{14\bar2} - E\,\zeta\,\varphi^{14\bar4}  
	- (\bar M\,\zeta - \bar N\,\theta)\,\varphi^{23\bar1}
	+ i\,s\,\zeta\,\varphi^{23\bar2}\,.
\end{align*}

From $[d\omega_\bC]_{12\bar4}=[d\omega_\bC]_{14\bar4}=0$, we obtain $\tau=\zeta=0$ and
the non-degeneracy condition \eqref{no-deg} becomes $\beta\theta\neq 0$. As a consequence,
$[d\omega_\bC]_{13\bar1}=0$ yields $N=0$. Defining a new basis as in \eqref{cambio}, 
one sees that Theorem~\ref{nilpotent-J} is satisfied, and thus $J$ is nilpotent.

We finally consider the case of a pair $(\mfg,J)$ parametrized by Theorem~\ref{SnN-8-d}~\textrm{(iii)}.
First, we observe that $B=E=0$ implies that $J$ is nilpotent (due to Theorem~\ref{nilpotent-J}), 
so we assume
\[
 (B,E)\neq (0,0)\,.
\]
Now, we compute
\begin{align*}
 d\omega_\bC &= - E\,\tau\,\varphi^{124}
	+ (L\,\theta + F\,\tau + \bar M\,\gamma)\,\varphi^{12\bar1}
	- (i\,s\,\gamma - M\,\theta)\,\varphi^{12\bar2} \\
& - (P\,\gamma - N\,\theta)\,\varphi^{12\bar 3} + E\,\tau\,\varphi^{12\bar4} + B\,\tau\,\varphi^{134} 
	+ (L\,\zeta - A\,\tau + \bar N\,\gamma)\,\varphi^{13\bar1} \\
& + (M\,\zeta + \bar P\,\gamma)\,\varphi^{13\bar2} - (i\,t\,\gamma - N\,\zeta)\,\varphi^{13\bar3} - B\,\tau\,\varphi^{13\bar 4} 
	- (F\,\zeta + A\,\theta)\,\varphi^{14\bar1} \\
& - (E\,\zeta + B\,\theta)\,\varphi^{14\bar4}  - (\bar M\,\zeta - \bar N\,\theta)\,\varphi^{23\bar1}
	+ (i\,s\,\zeta + \bar P\,\theta)\,\varphi^{23\bar2}\\
&	+ (P\,\zeta - i\,t\,\theta)\,\varphi^{23\bar3}\,.
\end{align*}

Since $[d\omega_\bC]_{124}=[d\omega_\bC]_{134}=0$, one needs $\tau=0$.
We next distinguish two cases depending on the value of the parameter $B$.

If we let $B=0$, then $E\neq 0$ and $[d\omega_\bC]_{14\bar4}=0$ gives $\zeta=0$.
Bearing in mind that also $\tau=0$, the non degeneration condition \eqref{no-deg} 
simplifies to $\beta\theta\neq 0$. Then, $[d\omega_\bC]_{14\bar1}=[d\omega_\bC]_{23\bar1}=
[d\omega_\bC]_{23\bar2}=[d\omega_\bC]_{23\bar3}=0$ gives
$A=N=P=t=0$, so applying \eqref{cambio} one finds a basis for $\mfg^{*(1,0)}$ in terms
of which Theorem~\ref{nilpotent-J} holds.

If $B\neq 0$, define a new basis for $\mfg^{*(1,0)}$ as follows
\[\begin{cases}
\ \text{for } E=0: \quad
\tau^1=\varphi^1,\quad \tau^2=\varphi^3,\quad \tau^3=\varphi^2,\quad \tau^4=\varphi^4,\\[5pt]
\ \text{for } E\neq0: \quad
\tau^1=\varphi^1,\quad \tau^2=\varphi^2-\frac BE\,\varphi^3,\quad \tau^3=\varphi^3,\quad \tau^4=\varphi^4\,.
\end{cases}
\]
In both cases, one checks that the new structure equations follow 
Theorem~\ref{SnN-8-d} \textrm{(iii)} with $B=0$. We have already seen that the
existence of a complex symplectic form on such $(\mfg,J)$ implies that $J$ is nilpotent. This concludes our proof.
\end{proof}

\begin{corollary}\label{co:redox8d}
Every complex symplectic nilpotent Lie algebra of dimension $8$ is the complex symplectic oxidation of a nilpotent complex symplectic Lie algebra of dimension $4$.
\end{corollary}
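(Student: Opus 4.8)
The plan is to combine the dichotomy between strongly non-nilpotent and quasi-nilpotent complex structures with Proposition~\ref{hs-implica-nilp}. Let $(\mfg,J,\omega)$ be an eight-dimensional complex symplectic nilpotent Lie algebra. By definition, $J$ is either strongly non-nilpotent, i.e.~$\mfa_1(J)=\{0\}$, or quasi-nilpotent, i.e.~$\mfa_1(J)\neq\{0\}$. The first case is excluded by Proposition~\ref{hs-implica-nilp}: an eight-dimensional nilpotent Lie algebra carrying an SnN complex structure admits no complex symplectic structure. Hence $J$ is necessarily quasi-nilpotent.

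Applying Corollary~\ref{co:redox_nilpotent} with $n=1$, we then conclude that $(\mfg,J,\omega)$ is the complex symplectic oxidation of a complex symplectic Lie algebra $(\bar\mfg,\bar J,\bar\omega)$ of dimension $4$; here $\bar\mfg=\mfa^{\perp}/\mfa$ is the complex symplectic reduction of $\mfg$ along a two-dimensional $J$-invariant central ideal $\mfa\subseteq\mfa_1(J)$ (such an ideal exists because $\mfa_1(J)$ is $J$-invariant, hence of even positive dimension, and lies in the centre, so any two-dimensional $J$-invariant subspace of it is a central ideal; it is automatically $\omega$-isotropic by Claim~\ref{cl:centralJinvariant2Disotropic}). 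It remains only to check that $\bar\mfg$ is nilpotent, which is immediate: $\mfa^{\perp}$ is a subalgebra of the nilpotent Lie algebra $\mfg$ by Lemma~\ref{le:complexsymplecticreduction}, hence nilpotent, and $\bar\mfg=\mfa^{\perp}/\mfa$ is a quotient of it, hence nilpotent. Optionally one may then invoke Proposition~\ref{pro:4dcomplexsymplectic} to add that $\bar\mfg$ is isomorphic to $\bR^4$ or to $\mfh_3\oplus\bR$, though this refinement is not part of the statement.

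There is no real obstacle in this deduction itself: the entire weight of the argument sits in Proposition~\ref{hs-implica-nilp}, which rules out SnN complex structures via the explicit case analysis built on Theorem~\ref{SnN-8-d}. Once that proposition and the oxidation machinery of Section~\ref{oxidation-reduction} are available, Corollary~\ref{co:redox8d} follows in a couple of lines, the only care needed being the elementary observations that a two-dimensional $J$-invariant central ideal always exists inside $\mfa_1(J)$ and that nilpotency passes to complex symplectic reductions.
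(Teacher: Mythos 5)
Your proposal is correct and follows essentially the same route as the paper: Proposition~\ref{hs-implica-nilp} excludes the strongly non-nilpotent case, so $J$ is quasi-nilpotent and Corollary~\ref{co:redox_nilpotent} applies. The extra details you supply (existence of a two-dimensional $J$-invariant central ideal inside $\mfa_1(J)$ and the preservation of nilpotency under reduction) are accurate fill-ins of points the paper leaves implicit, not a different argument.
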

\begin{proof}
By Proposition \ref{hs-implica-nilp}, the complex structure of an 8-dimensional complex symplectic nilpotent Lie algebra $(\mfg,J,\omega)$ is quasi-nilpotent, hence we can apply Corollary \ref{co:redox_nilpotent}
\end{proof}

\begin{remark}
The proof of Proposition \ref{hs-implica-nilp} relies on the above mentioned classification of SnN complex structures on 8-dimensional nilpotent Lie algebras. Two natural questions arise:
\begin{itemize}
\item Is it possible to prove the result without appealing to the classification?
\item Are there examples of complex symplectic structures $(J,\omega)$ on nilpotent Lie algebras of dimension $4n$, $n\geq 3$, such that $J$ is SnN?
\end{itemize}
\end{remark}

\subsection{Complex symplectic oxidation data in dimension four}
Our next goal is to explicitly obtain all complex symplectic nilpotent Lie algebras in dimension 8. To do this, we need to determine all possible nilpotent complex symplectic oxidation data on the 4-dimensional complex symplectic nilpotent Lie algebras $(\bR^4,J_0,\omega_0)$ and $(\mfh_3\oplus\bR,J_0,\omega_0)$.\\

We start with nilpotent complex symplectic oxidation data on $(\mfh_3\oplus\bR,J_0,\omega_0)$
and use here and in the next proposition the notations from Lemma~\ref{le:oxidationdata}:

\begin{proposition}\label{pro:oxidationdatah3+R}
Let $(f,S,\tau)$ be nilpotent complex symplectic oxidation data on $(\mfh_3\oplus\bR,J_0,\omega_0)$. Then there exists a non-zero vector $v\in V$ and real numbers $\alpha_i,\beta_i,\gamma_i\in\bR$, $i=1,2,4$, such that with the notations of Lemma \ref{le:oxidationdata} we have
\[
 S_{11}=\alpha_1e^1+\alpha_2e^2+\alpha_4e^4, \ S_{22}=\beta_1e^1+\beta_2e^2-\alpha_4e^4 \ \mathrm{and} \ 
S_{12}=\gamma_1e^1+\gamma_2e^2+\gamma_4 e^4
\]
and $f_1=\left(\begin{smallmatrix}0 & 0\\ A & 0\end{smallmatrix}\right)$, $f_2=\left(\begin{smallmatrix} 0 & 0\\ B & 0 \end{smallmatrix}\right)$, where the $2\times 2$-matrices $A,B$ have one of the following forms:
\begin{itemize}
  \item[(i)] $A=\begin{pmatrix} 2 & 0\\ 0 & 0 \end{pmatrix}$, $B=\begin{pmatrix} 0 & -2\\ 0 & 0 \end{pmatrix}$ and $\alpha_4\neq 0$ or $\gamma_4\neq 0$;
  \item[(ii)] $A=\begin{pmatrix} a+1 & -b\\ b & a-1 \end{pmatrix}$, $B=\begin{pmatrix} c & -d-1\\ d-1 & c \end{pmatrix}$ with $a,b,c,d\in\bR$ and $\alpha_4=\gamma_4=0$;
  \item[(iii)] $A=\begin{pmatrix} 1 & 0\\ 0 & 1 \end{pmatrix}$, $B=\begin{pmatrix} a & -b\\ b & a \end{pmatrix}$ with $b\neq 0$ and $\alpha_4=\gamma_4=0$;
  \item[(iv)] $A=\begin{pmatrix}\cos(\phi) & -\sin(\phi)\\ \sin(\phi) & \cos(\phi)\end{pmatrix}$, $B=0$, $\phi\in\left[0,2\pi\right)$ and $\alpha_4=\gamma_4=0$;
  \item[(v)] $A=B=0$.
\end{itemize}
\end{proposition}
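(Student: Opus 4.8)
The plan is to unwind the four conditions of Lemma~\ref{le:oxidationdata} using the rigid structure of $\bar\mfg:=\mfh_3\oplus\bR$. I would fix the basis $e_1,\dots,e_4$ with $[e_1,e_2]=e_3$ and set $L:=\langle e_1,e_2\rangle$, $L':=\langle e_3,e_4\rangle=\mfz(\bar\mfg)$; both are $J_0$-invariant and $\omega_0$-Lagrangian, $\omega_0$ pairs $L$ with $L'$ nondegenerately, and $J_0$ acts on each of $L,L'$ in these bases as $j=\left(\begin{smallmatrix}0&1\\-1&0\end{smallmatrix}\right)$. First I would record three auxiliary facts: (a) every derivation of $\bar\mfg$ preserves $L'$ and $[\bar\mfg,\bar\mfg]=\langle e_3\rangle$, so in block form with respect to $L\oplus L'$ it is $\left(\begin{smallmatrix}P&0\\Q&R\end{smallmatrix}\right)$ with $R_{11}=\tr P$ and $R_{21}=0$, and it is nilpotent precisely when $\tr P=\det P=0$ and $R=\left(\begin{smallmatrix}0&*\\0&0\end{smallmatrix}\right)$; (b) an element of $\mathfrak{sp}(\bar\mfg,J_0,\omega_0)$ that preserves $L'$ is block lower triangular $\left(\begin{smallmatrix}M&0\\N&K\end{smallmatrix}\right)$ with $M,N,K$ commuting with $j$ and with opposite $L\!\to\!L$ and $L'\!\to\!L'$ blocks, $K=-M$ (the symplectic condition reads $K=-\Phi M^{t}\Phi$ for $\Phi=\left(\begin{smallmatrix}0&-1\\-1&0\end{smallmatrix}\right)$ the pairing matrix, and $\Phi M^{t}\Phi=M$ once $M$ commutes with $j$); (c) $\Aut(\bar\mfg,J_0,\omega_0)$ is a two-dimensional unipotent group, lower triangular with $j$-commuting off-diagonal block, so conjugation by it fixes every endomorphism of the form $\left(\begin{smallmatrix}0&0\\Q&0\end{smallmatrix}\right)$; hence the only symmetry available for normalizing $(f_1,f_2)$ is $\GL(V,I)\cong\bC^{\times}$, which acts by $f_1+if_2\mapsto\bar\mu\,(f_1+if_2)$.

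The heart of the argument is to show that $f_1$ and $f_2$ are block strictly lower triangular, i.e.\ $f_i(L)\subseteq L'$ and $f_i(L')=0$. Since the oxidation is nilpotent, $f_1,f_2$ are nilpotent derivations, so by (a) each $f_i=\left(\begin{smallmatrix}P_i&0\\Q_i&R_i\end{smallmatrix}\right)$ with $\tr P_i=\det P_i=0$ and $R_i=\left(\begin{smallmatrix}0&c_i\\0&0\end{smallmatrix}\right)$. As $f_i$ and $J_0$ preserve $L'$, Lemma~\ref{le:oxidationdata}(i) places $f_2-J_0 f_1$ in the subalgebra of (b); matching its $L'\!\to\!L'$ block $R_2-jR_1=\left(\begin{smallmatrix}0&c_2\\0&c_1\end{smallmatrix}\right)$ against $-(P_2-jP_1)$ and using that $P_2-jP_1$ commutes with $j$ forces $c_1=c_2=0$ and $P_2=jP_1$. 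Then $P_1$ is traceless with $\det P_1=0$, while $P_2=jP_1$ is also traceless, which makes $P_1$ symmetric, traceless, with zero determinant, hence $P_1=0=P_2$. Thus $f_1=\left(\begin{smallmatrix}0&0\\A&0\end{smallmatrix}\right)$, $f_2=\left(\begin{smallmatrix}0&0\\B&0\end{smallmatrix}\right)$ with $A:=Q_1$, $B:=Q_2$.

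With $f_i$ of this shape one has $f_i^{2}=0$, $f_i(L')=0$ and $f_i.\omega_0\in\langle e^{12}\rangle$, hence $f_i.(f_i.\omega_0)=0=f_1.(f_2.\omega_0)$; since moreover $J_0^{*}$ preserves $\langle e^1,e^2\rangle$ and $e^1,e^2,e^4$ are $d_{\bar\mfg}$-closed, Lemma~\ref{le:oxidationdata}(iii) forces $S_{11},S_{22},S_{12}\in\langle e^1,e^2,e^4\rangle$, which I would write as in the statement. Condition (ii) reads $\{f_1,f_2\}=f_1f_2-f_2f_1=0$, so $J_0(S_{11}^{\sharp}+S_{22}^{\sharp})\in\mfz(\bar\mfg)=L'$; computing $S_{ii}^{\sharp}$ with $\omega_0$ gives $S_{11}^{\sharp}+S_{22}^{\sharp}\equiv(\alpha_4+\beta_4)e_1\pmod{L'}$, whence $\beta_4=-\alpha_4$. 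For condition (iv): $J_0(S_{11}^{\sharp}+S_{22}^{\sharp})\in L'$ is killed by $f_i$ and $S_{11}+S_{22}\in\langle e^1,e^2\rangle$ annihilates $f_i(\bar\mfg)\subseteq L'$, so its right-hand side vanishes; using that $S_{jk}\circ f_i$ equals $(S_{jk})_4$ times (the bottom row of $A$ resp.\ $B$, read off in $e^1,e^2$), condition (iv) becomes the linear system $\gamma_4\,a=\alpha_4\,b$, $-\alpha_4\,a=\gamma_4\,b$ for the bottom rows $a,b\in\bR^{2}$ of $A,B$, of determinant $-(\alpha_4^{2}+\gamma_4^{2})$.

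Finally I would split cases on $(\alpha_4,\gamma_4)$. If it is nonzero the system forces $a=b=0$, and condition (i) forces $B-jA$ to commute with $j$, pinning $B=\left(\begin{smallmatrix}A_{12}&-A_{11}\\0&0\end{smallmatrix}\right)$ and giving $A+iB=(A_{11}+iA_{12})\left(\begin{smallmatrix}1&-i\\0&0\end{smallmatrix}\right)$; if $A=0$ this is (v), otherwise $\GL(V,I)$ rescales $A_{11}+iA_{12}$ (normalizing its size, say to $2$), giving (i). If $(\alpha_4,\gamma_4)=(0,0)$, (iv) is vacuous and (i) only requires $C:=B-jA$ to commute with $j$, so the datum reduces to a pair $(A,C)$ with $A\in M_2(\bR)$ arbitrary and $C$ $j$-commuting; splitting $A=A_++A_-$ into $j$-commuting and $j$-anticommuting parts, $\GL(V,I)$ acts by $A_-\mapsto R_\mu A_-$, $C\mapsto R_{\bar\mu}C$, with $A_+$ affine. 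When $A_-\neq0$ one normalizes $A_-=\left(\begin{smallmatrix}1&0\\0&-1\end{smallmatrix}\right)$, leaving $A_+$ and $C$ free — case (ii); when $A_-=0$ one gets (v) if $A=B=0$, gets (iv) after normalizing $A$ to a rotation if $A=0\neq B$ or $B=0\neq A$, and gets (iii) after normalizing $A=\mathrm{Id}$ (the sub-case $B\in\bR\,\mathrm{Id}$ folding back into (iv)) if $A$ is $j$-commuting and invertible with $B\neq0$. The hard part will be the crux step of the second paragraph — forcing $f_1,f_2$ strictly lower triangular by combining nilpotency, the derivation law and condition (i), the decisive input being that a symmetric traceless real $2\times2$ matrix with zero determinant vanishes — together with the orbit bookkeeping in the last case, where it is essential (fact (c)) that $\Aut(\bar\mfg,J_0,\omega_0)$ acts trivially on $(f_1,f_2)$, so that $\GL(V,I)$ is the whole available symmetry.
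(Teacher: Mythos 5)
Your proposal is correct and follows essentially the same route as the paper's proof: pin down the shape of nilpotent derivations of $\mfh_3\oplus\bR$, combine condition (i) of Lemma \ref{le:oxidationdata} with nilpotency to force $f_1,f_2$ strictly block lower triangular (the decisive point in both arguments being that a symmetric traceless real $2\times 2$ matrix of determinant zero vanishes), normalize $(A,B)$ via the $\GL(V,I)$-action, and then work through conditions (ii)--(iv) on $S$, with only cosmetic differences (block-matrix bookkeeping, and reaching case (i) by killing the bottom rows directly from (iv) rather than via the paper's $4\times4$ system $L$). The one small slip is the claim that (iii) alone forces $S_{12}\in\langle e^1,e^2,e^4\rangle$: the $S_{12}$-equation actually yields $\gamma_3=\tfrac12(\alpha_4+\beta_4)$, so you need $\beta_4=-\alpha_4$ from (ii) first, exactly as the paper orders it.
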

\begin{proof}
Let $(f,S,\tau)$ be nilpotent complex symplectic oxidation data, choose some non-zero vector $v\in V$ and set $v_1\coloneqq v$ and $v_2\coloneqq Iv$. Then
$f_1\coloneqq f(v_1), f_2\coloneqq f(v_2)$ are nilpotent derivations. We first investigate how such a nilpotent derivation $\varphi\in\mathrm{Der}(\mfh_3\oplus\bR)$ looks like in general. Thereto, observe that $\left[\mfh_3\oplus\bR,\mfh_3\oplus\bR\right]=\la e_3\ra $ and $\mathfrak{z}(\mfh_3\oplus\bR)=\la e_3,e_4\ra $. As $\varphi$ has to preserve
these spaces and is additionally nilpotent, we must have $\varphi(e_3)=0$ and $\varphi(e_4)\in\la e_3\ra$. Now $\varphi([X,Y])=[\varphi(X),Y]+[X,\varphi(Y)]$ is automatically fulfilled if $X$ or $Y$ is central and we only need to check this equation for $X=e_1$, $Y=e_2$, which gives us $0=\varphi([e_1,e_2])=[\varphi(e_1),e_2]+[e_1,\varphi(e_2)]$.

We denote $f_1(e_k)=\sum_{i=1}^4x_{ik}\,e_i$ and
$f_2(e_k)=\sum_{i=1}^4y_{ik}\,e_i$. Then, in matrix form with respect to the basis $\{e_1,\ldots,e_4\}$, one has
\begin{equation*}
f_1=\begin{pmatrix} x_{11} & x_{12} & 0 & 0\\ x_{21} & -x_{11} & 0 & 0\\ x_{31} & x_{32} & 0 & x_{34}\\ x_{41} & x_{42} & 0 & 0 \end{pmatrix}, \quad f_2=\begin{pmatrix} y_{11} & y_{12} & 0 & 0\\ y_{21} & -y_{11} & 0 & 0\\ y_{31} & y_{32} & 0 & y_{34}\\ y_{41} & y_{42} & 0 & 0 \end{pmatrix}
\end{equation*}
with $x_{ij}, y_{ij}\in\bR$ such that $x_{11}^2+x_{12}\, x_{21}=0$ and $y_{11}^2+y_{12}\, y_{21}=0$, the latter two conditions coming from the fact that $f_1$
and $f_2$ have to be nilpotent.

Let us now impose the condition $h\coloneqq f_2-J_0\circ f_1\in\mathfrak{sp}(\mfh_3\oplus \bR,\omega_0, J_0)$, as required in Lemma~\ref{le:oxidationdata}. First observe that $h(e_3)=0$ and $h(e_4)=y_{34}e_3+x_{34}e_4$. But $h$ is an element of $\mathfrak{sp}(\mfh_3\oplus \bR,\omega_0, J_0)$, so we must have
$h(e_4)=0$ as well, i.e. $x_{34}=y_{34}=0$. Moreover, the condition $h.\omega_0=0$ gives us $\omega_0(h(e_i),e_j)=0$ for all $i=1,2$ and all $j=3,4$, which implies that
$h(e_1),h(e_2)\in \la e_3,e_4\ra^{\perp}=\la e_3,e_4\ra$. 
Performing the appropriate computations, one checks that the latter is equivalent to $-y_{21}=x_{11}=-y_{12}$, $x_{21}=y_{11}=x_{12}$, which together with
$x_{11}^2+x_{12}\, x_{21}=0=y_{11}^2+y_{12}\, y_{21}$ implies $x_{11}=x_{12}=x_{21}=y_{11}=y_{12}=y_{21}=0$. 
This gives us
$f_1=\left(\begin{smallmatrix} 0 & 0 \\ A & 0 \end{smallmatrix}\right)$ and $f_2=\left(\begin{smallmatrix} 0 & 0 \\ B & 0 \end{smallmatrix}\right)$,
for some $2\times 2$ matrices
\[
A\coloneqq \begin{pmatrix} x_{31} & x_{32} \\ x_{41} & x_{42} \end{pmatrix} \quad \textrm{and} \quad B\coloneqq \begin{pmatrix} y_{31} & y_{32} \\ y_{41} & y_{42} \end{pmatrix}.
\]
Hence,
$h.\omega_0=0$ is fulfilled. We now need to impose $[h,J_0]=0$.
Setting $J\coloneqq \left( \begin{smallmatrix} 0 & 1 \\ -1 & 0 \end{smallmatrix}\right)$, the previous condition is equivalent to $[B-JA,J]=0$.
Decomposing $A=A_1+A_2$ and $B=B_1+B_2$ into their $J$-commutative parts $A_1$, $B_1$ and $J$-anti-commutative parts $A_2$ and $B_2$, the condition becomes $B_2=J A_2$.

Let us first assume that $A_2\neq 0$. Then $A_2=\left(\begin{smallmatrix} \tilde{a} & \tilde{b} \\ \tilde{b} & -\tilde{a} \end{smallmatrix}\right)$
and so $B_2=\left(\begin{smallmatrix} \tilde{b} & -\tilde{a} \\ -\tilde{a} & -\tilde{b} \end{smallmatrix}\right)$
for certain $(\tilde{a},\tilde{b})\in \bR^2\setminus \{(0,0)\}$. Since $A_2$ and $B_2$ are linearly independent, there is a linear combination of them which is equal to $\left(\begin{smallmatrix} 1 & 0 \\ 0 & -1\end{smallmatrix}\right)$. Hence, by replacing the non-zero vector $v$ by a linear combination of $v$ and $Jv$, we may assume that $A_2=\left(\begin{smallmatrix} 1 & 0 \\ 0 & -1\end{smallmatrix}\right)$
and consequently $B_2=\left(\begin{smallmatrix} 0 & -1 \\ -1 & 0 \end{smallmatrix}\right)$. Then, we have $A=\left(\begin{smallmatrix} a+1 & -b \\ b & a-1 \end{smallmatrix}\right)$ and $B=\left(\begin{smallmatrix} c & -d-1 \\ d-1 & c \end{smallmatrix}\right)$ for certain $a,b,c,d\in \bR$ in this case.

We now suppose $A_2=0$, which implies $B_2=0$ as well. Thus, $A=\left(\begin{smallmatrix} \tilde{a} & -\tilde{b} \\ \tilde{b} & \tilde{a} \end{smallmatrix}\right)$ and $B=\left(\begin{smallmatrix} a & -b \\ b & a \end{smallmatrix}\right)$ for certain $\tilde{a},\tilde{b},a,b\in \bR$. If $(\tilde{a},\tilde{b})$ and $(a,b)$ are linearly independent, we can replace $v$ as before by a linear combination of $v$ and $Jv$ such that we may assume $\tilde{a}=1$, $\tilde{b}=0$ and $b\neq 0$. If they are linearly dependent but not both zero, then we may assume that $a=b=0$ and normalize $\tilde{a}^2+\tilde{b}^2=1$, i.e. $\tilde{a}=\cos(\phi)$, $\tilde{b}=\sin(\phi)$ for some $\phi\in [0,2\pi)$. Finally, the last case is $A=0$ and $B=0$.

Summarizing we have the following possibilities for $A$ and $B$:
\begin{itemize}
  \item $A=\left(\begin{smallmatrix} a+1 & -b\\ b & a-1 \end{smallmatrix}\right)$, $B=\left(\begin{smallmatrix} c & -d-1 \\ d-1 & c \end{smallmatrix}\right)$, $a,b,c,d\in\bR$,
  \item $A=\left(\begin{smallmatrix} 1 & 0\\ 0 & 1 \end{smallmatrix}\right)$, $B=\left(\begin{smallmatrix} a & -b \\ b & a \end{smallmatrix}\right)$ with $a,b\in\bR$ and $b\neq 0$,
  \item $A=\left(\begin{smallmatrix} \cos(\phi) & -\sin(\phi)\\ \sin(\phi) & \cos(\phi) \end{smallmatrix}\right)$, $B=0$, $\phi\in \left[0,2\pi \right)$,
  \item $A=0$, $B=0$.
\end{itemize}

Now, we investigate the remaining data $S_{11}$, $S_{22}$ and $S_{12}$. By Lemma~\ref{le:oxidationdata}~(ii), we need $[f_1,f_2]=[J_0(S_{11}^{\sharp}+S_{22}^{\sharp}),\cdot]_{\bar{\mfg}}$, where $\bar\mfg=\mfh_3\oplus\bR$. In our situation $f_1$ and $f_2$ commute. Hence, $J_0(S_{11}^{\sharp}+S_{22}^{\sharp})$ lies in $\mfz(\mfh_3\oplus\bR)=\la e_3,e_4\ra$. Writing $S_{11}=\sum_{i=1}^4\alpha_ie^i$, $S_{22}=\sum_{i=1}^4\beta_ie^i$, $S_{12}=\sum_{i=1}^4\gamma_ie^i$ for $\alpha_i,\beta_i,\gamma_i\in\bR$, we obtain
\[
J_0(S_{11}^{\sharp}+S_{22}^{\sharp}) = (\alpha_3+\beta_3)e_1-(\alpha_4+\beta_4)e_2-(\alpha_1+\beta_1)e_3+(\alpha_2+\beta_2)e_4\,,
\]
and so $\beta_3=-\alpha_3$, $\beta_4=-\alpha_4$. Hence, $S_{11}+S_{22}\in\la e_1,e_2\ra$.

Now look at the conditions (iii) in Lemma \ref{le:oxidationdata}. Since $f_i\circ f_j=0$ for $i,j=1,2$ and $\mathrm{im}(f_1),\mathrm{im}(f_2)\subseteq \la e_3,e_4\ra$, we have $f_i.f_j.\omega_0=0$.
Moreover, $J_0^*(S_{11}+S_{22})\in \la e_1,e_2\ra$ implies $d_{\bar\mfg}(J_0^*(S_{11}+S_{22}))=0$. Hence, the conditions (iii) in Lemma \ref{le:oxidationdata} are equivalent to $S_{11}$, $S_{22}$ and $S_{12}$ 
all being closed, i.e. to $\alpha_3=\beta_3=\gamma_3=0$.

Finally, consider the conditions (iv) in Lemma \ref{le:oxidationdata}. Since $J_0(S_{11}^{\sharp}+S_{22}^{\sharp})\in\la e_3,e_4\ra\subset\ker f_1,\ker f_2$, the first summand on the right hand side of the conditions vanishes. Moreover, $\mathrm{im}(J_0\circ f_i) \subset\la e_3,e_4\ra$ for $i=1,2$ and $S_{11}+S_{22}$ vanishes on $\la e_3,e_4\ra$, so the initial expressions (iv) are equivalent to
\begin{equation}\label{aux}
S_{12}\circ f_1 - S_{11}\circ f_2=0, \quad S_{22}\circ f_1-S_{12}\circ f_2=0\,. 
\end{equation}

We evaluate both equations individually for the above possibilities for~$A$ and~$B$. Thereto, notice that both equations are automatically fulfilled if evaluated on $\la e_3,e_4\ra\subset\ker f_1,\ker f_2$. Hence, we only have to check the equations on $\la e_1,e_2\ra$:
\begin{itemize}
\item
If $A=\left(\begin{smallmatrix} a+1 & -b\\ b & a-1 \end{smallmatrix}\right)$, $B=\left(\begin{smallmatrix} c & -d-1 \\ d-1 & c \end{smallmatrix}\right)$, $a,b,c,d\in\bR$, then we obtain by inserting $e_1$ and $e_2$ the equations
\begin{align*}
  \gamma_4b-\alpha_4(d-1) &=0, & \gamma_4(a-1)-\alpha_4c &=0,\\ -\alpha_4 b-\gamma_4(d-1) &= 0, &  -\alpha_4(a-1)-\gamma_4c &=0.
\end{align*}
These equations are equivalent to
\begin{equation*}
Lx:=\begin{pmatrix} 0 & \gamma_4 & 0 & -\alpha_4\\ \gamma_4 & 0 & -\alpha_4 & 0\\ 0 & -\alpha_4 & 0 & -\gamma_4\\ -\alpha_4 & 0 & -\gamma_4 & 0 \end{pmatrix} \begin{pmatrix} a-1\\ b\\ c\\ d-1 \end{pmatrix} = \begin{pmatrix} 0\\ 0\\ 0\\ 0 \end{pmatrix}
\end{equation*}
One computes $\det(L)=(\alpha_4^2+\gamma_4^2)^2$. Therefore, if $\alpha_4\neq 0$ or $\gamma_4\neq 0$, then $a=1$, $b=0$, $c=0$ and $d=1$. This is case (i). If $\alpha_4=\gamma_4=0$, then there are no further restrictions for $a,b,c,d$. This is case (ii).
\item
If $A=\left(\begin{smallmatrix} 1 & 0\\ 0 & 1 \end{smallmatrix}\right)$, $B=\left(\begin{smallmatrix} a & -b \\ b & a \end{smallmatrix}\right)$, the equations \eqref{aux} are equivalent to $\alpha_4=\gamma_4=0$, giving us case (iii).
\item
If $A=\left(\begin{smallmatrix} c & -s\\ s & c \end{smallmatrix}\right)$, $B=0$ with $c=\cos(\phi)$, $s=\sin(\phi)$ for $\phi\in \left[0,2\pi \right)$, the equations \eqref{aux} are equivalent
to $\alpha_4=\gamma_4=0$. This is case (iv).
\item
If $A=0$ and $B=0$ then there are no further restrictions to $\alpha_4$ and $\gamma_4$ and we obtained case (v).
\end{itemize}
\end{proof}

We move to the nilpotent complex symplectic oxidation data on $(\bR^4,J_0,\omega_0)$. Thereto, we first prove two lemmas which will turn out to be useful for that computation:
\begin{lemma}\label{le:oxidationdata2}
Let $(\mfg,J,\omega)$ be a 4-dimensional complex symplectic Lie algebra and let $W\subseteq \mfg$ be a two-dimensional $J$-invariant subspace. Then there is an adapted basis $\{X_1,X_2,X_3,X_4\}$ for $(\mfg,J,\omega)$ such that $\{X_3,X_4\}$ is a basis of $W$.
\end{lemma}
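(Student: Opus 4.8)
The plan is to build the adapted basis directly from $W$, using it to produce $X_3,X_4$ and then solving a linear equation for $X_1$ (with $X_2$ forced to be $-JX_1$). Note first that only the pair $(J,\omega)$ on the vector space $\mfg$ enters the statement, so this is pure linear algebra. I would begin by observing that $W$ is automatically $\omega$-isotropic: this is exactly the computation in the proof of Claim~\ref{cl:centralJinvariant2Disotropic}, which uses only that $J$ is symmetric with respect to $\omega$ and not that the subspace is an ideal. Since $\omega$ is non-degenerate and $\dim\mfg=4$, an isotropic plane is Lagrangian, so $W^{\perp}=W$, where $\perp$ denotes the $\omega$-orthogonal complement; this fact is used below.

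Next I would fix any nonzero $X_3\in W$ and set $X_4:=-JX_3$. Since $W$ is two-dimensional and $J$-invariant, $\{X_3,X_4\}$ is a basis of $W$ with $JX_3=-X_4$ and $JX_4=X_3$, matching the normal form $J_0$ of Remark~\ref{re:adaptedbasis}, and $\omega(X_3,X_4)=-\omega(X_3,JX_3)=0$. The main step is producing $X_1$: consider the linear map $\Phi\colon\mfg\to\bR^2$ given by $\Phi(Y):=(\omega(Y,X_3),\omega(Y,X_4))$. Its kernel is $\{X_3,X_4\}^{\perp}=W^{\perp}$, which is two-dimensional, so $\Phi$ is surjective and there exists $X_1\in\mfg$ with $\omega(X_1,X_3)=0$ and $\omega(X_1,X_4)=1$. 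Setting $X_2:=-JX_1$, so that $JX_1=-X_2$ and $JX_2=X_1$, one uses $\omega(JU,V)=\omega(U,JV)$ and $J^2=-\id$ repeatedly to verify
\[
\omega(X_1,X_2)=-\omega(X_1,JX_1)=0,\qquad \omega(X_2,X_4)=\omega(JX_1,JX_3)=-\omega(X_1,X_3)=0,
\]
\[
\omega(X_2,X_3)=-\omega(JX_1,X_3)=-\omega(X_1,JX_3)=\omega(X_1,X_4)=1 ,
\]
which, together with the pairings already recorded, shows that $J=J_0$ and $\omega=\omega_0$ in the ordered tuple $(X_1,X_2,X_3,X_4)$ — provided this tuple is a basis.

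Finally I would check linear independence. The subspace $\spa{X_1,X_2}=\spa{X_1,JX_1}$ is genuinely two-dimensional because $X_1\neq0$ (otherwise $\omega(X_1,X_4)=0\neq1$), and it is $J$-invariant, as is $W$; hence their intersection is $J$-invariant and therefore of even real dimension. It cannot equal $W$, since then $X_1\in W=W^{\perp}$ would force $\omega(X_1,X_4)=0$. Thus the intersection is $\{0\}$, so $\mfg=\spa{X_1,X_2}\oplus W$ and $\{X_1,X_2,X_3,X_4\}$ is the desired adapted basis. I do not expect a real obstacle here; the only point that needs care is keeping the signs consistent with the $J_0,\omega_0$ normal form, which is precisely why $X_4$ and $X_2$ are defined as $-JX_3$ and $-JX_1$ rather than $JX_3$ and $JX_1$.
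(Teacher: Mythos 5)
Your proof is correct, but it takes a genuinely different route from the paper's. The paper starts from an arbitrary adapted basis $\{Y_1,\ldots,Y_4\}$, uses it to identify $(\mfg,J)$ with $\bC^2$, and then invokes the transitivity of $\mathrm{SL}(2,\bC)=\mathrm{Sp}(2,\bC)$ on complex lines together with the fact from Remark~\ref{re:adaptedbasis} that adapted bases form a single $\mathrm{Sp}(2,\bC)$-orbit: one multiplies $\{Y_i\}$ on the right by a suitable $A\in\mathrm{SL}(2,\bC)$ so that the last two vectors of the new basis span $W$. You instead build the adapted basis by hand: you note that $W$ is a $J$-invariant Lagrangian plane (and you are right that the computation in Claim~\ref{cl:centralJinvariant2Disotropic} uses only $J$-invariance and the symmetry of $J$ with respect to $\omega$, not the ideal property), take $X_3\in W$ and $X_4=-JX_3$, produce $X_1$ from the surjectivity of $Y\mapsto(\omega(Y,X_3),\omega(Y,X_4))$, and set $X_2=-JX_1$; your sign conventions match the normal form $J_0e_1=-e_2$, $J_0e_3=-e_4$, $\omega_0=e^1\wedge e^4+e^2\wedge e^3$, the pairing checks are all correct, and the linear-independence step (a nonzero $J$-invariant intersection would force $\mathrm{span}(X_1,X_2)=W=W^{\perp}$, contradicting $\omega(X_1,X_4)=1$) is sound. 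What each approach buys: the paper's argument is a two-line reduction to a standard transitivity fact once Remark~\ref{re:adaptedbasis} is in place, whereas yours is self-contained linear algebra that makes the Darboux-type normalization explicit and would adapt with little change to $J$-invariant Lagrangian subspaces in higher dimensions.
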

\begin{proof}
Choose an arbitrary adapted basis $\{Y_1,Y_2,Y_3,Y_4\}$ for $(\mfg,J,\omega)$. Obviously, $\mathrm{SL}(2,\bC)$ acts transitively on $\bC^2\setminus \{(0,0)\}$ and so $\mathrm{SL}(2,\bC)$, considered as a subgroup of $\GL(4,\bR)$, acts transitively on complex one-dimensional subspaces of $\bR^4$. As $\{Y_1,Y_2,Y_3,Y_4\}$ identifies
$\mfg$ with $\bR^4$ as vector spaces such that $J$-invariant subspaces of $\mfg$ correspond to complex-one-dimensional subspaces of $\bR^4$, there is an element $A\in \mathrm{SL}(2,\bC)\subseteq \GL(4,\bR)$ such that for $\{X_1,X_2,X_3,X_4\}:=\{Y_1,Y_2,Y_3,Y_4\}\cdot A$, 
the vectors $X_3$ and $X_4$ span $W$. Now by Remark \ref{re:adaptedbasis}, $\{X_1,X_2,X_3,X_4\}$ is an adapted basis for $(\mfg,J,\omega)$ as well and the claim follows.
\end{proof}

\begin{lemma}\label{le:oxidationdata1}
Let $(f,S,\tau)$ be a nilpotent complex symplectic oxidation datum on $(\bR^4,J_0,\omega_0)$. Then there exist a two-dimensional $J_0$-invariant subspace $W$
such that $f(v)|_W=0$ for all $v\in V$.
\end{lemma}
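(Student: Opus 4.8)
The plan is to translate the four conditions of Lemma~\ref{le:oxidationdata} into statements about the endomorphisms $f_1:=f(v)$, $f_2:=f(Iv)$ of $\bR^4$ and then run a rank analysis. Since $\bR^4$ is abelian, $d_{\bR^4}=0$ and $[\,\cdot\,,\cdot\,]_{\bR^4}=0$, so the conditions collapse to: $f_1,f_2$ are commuting nilpotent endomorphisms of $\bR^4$ (from nilpotency of the datum and (ii)); $g:=f_2-J_0f_1\in\mathfrak{sp}(\bR^4,J_0,\omega_0)$, in particular $[g,J_0]=0$, and also $f_1+J_0f_2\in\mathfrak{sp}(\bR^4,J_0,\omega_0)$, so that $g':=-(f_1+J_0f_2)$ likewise commutes with $J_0$ (from (i)); and $f_i.(f_i.\omega_0)=0$, $f_1.(f_2.\omega_0)=0$, together with the relation (iv) on $S$. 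The key preliminary reduction I would record is: it suffices to find $w\neq 0$ with $f_1w=0$, $f_1(J_0w)=0$ and $gw=0$; then $W:=\la w,J_0w\ra$ is a $J_0$-invariant plane with $f_1|_W=0$, and $f_2|_W=0$ because $f_2=J_0f_1+g$ and $[g,J_0]=0$ give $f_2w=gw=0$ and $f_2(J_0w)=J_0gw=0$, so $f(u)|_W=0$ for every $u\in V$. By symmetry it is equally enough to produce such a $w$ for the pair $(f_2,g')$.

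Next I would dispose of the ``large kernel'' cases by a single observation: a nilpotent $\bC$-linear endomorphism of a complex line is zero. Put $P:=\ker f_1\cap J_0(\ker f_1)$, the largest $J_0$-invariant subspace of $\ker f_1$. Since $f_1,f_2$ commute, $f_2$ preserves $\ker f_1$, and on $\ker f_1$ one has $f_2=g$; as $g$ commutes with $J_0$ it preserves $\ker f_1$ and $J_0(\ker f_1)$, hence $P$, and $g|_P=f_2|_P$ is nilpotent. If $\dim P=2$ then $(P,J_0)\cong\bC$, so $g|_P=0$ and $W:=P$ works; if $\dim P=4$ then $f_1=0$ and $W$ can be taken to be the complex kernel of the $\bC$-linear nilpotent endomorphism $g=f_2$. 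This settles every situation in which $\ker f_1$ contains a $J_0$-invariant plane, in particular $\rk f_1\le 1$, and also $\rk f_1=2$ with $\ker f_1$ already $J_0$-invariant; the same argument with $(f_2,g')$ in place of $(f_1,g)$ covers the analogous cases for $f_2$.

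It then remains to rule out the ``bad'' configurations, namely that $\rk f_1\ge 2$ with $\ker f_1$ not $J_0$-invariant while $f_2$ fails the corresponding test. I would split on $\rk f_1$. If $\rk f_1=3$ then $f_1$ is a regular nilpotent, its centralizer is $\bR[f_1]$, and nilpotency forces $f_2=c_1f_1+c_2f_1^2+c_3f_1^3$; if $c_1=c_2=0$ then $\rk f_2\le 1$ and the previous paragraph (applied to $(f_2,g')$) finishes, so only $c_1\neq 0$ or $c_2\neq 0$ survives. Introducing the $\bC$-antilinear part $A^-:=\tfrac12(A+J_0AJ_0)$, condition (i) becomes the single equation $c_1f_1^-+c_2(f_1^2)^-+c_3(f_1^3)^-=J_0\,f_1^-$, with $f_1^-\neq 0$ since $f_1$ is regular; after normalising $f_1$ by a conjugation (which preserves all the conditions, carrying $J_0,\omega_0$ along) one combines this with $f_i.(f_i.\omega_0)=0$ and (iv) to reach a contradiction. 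The last case, $\rk f_1=2$ with $\ker f_1$ not $J_0$-invariant (so $\bR^4=\ker f_1\oplus J_0\ker f_1$ and $g$ is nilpotent of rank $\le 2$), is excluded in the same fashion.

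I expect this final case analysis to be the main obstacle. None of the conditions alone is decisive — for instance $f_i.(f_i.\omega_0)=0$ does not bound $\rk f_i$, being satisfied by a regular nilpotent for a whole family of compatible symplectic forms — so the exclusion of the bad ranks and the verification that $\ker f_1$ is $J_0$-invariant genuinely rely on the interaction of $[f_1,f_2]=0$, the condition (i), and the quadratic conditions (iii)--(iv) taken together; this is where the computational weight of the proof sits.
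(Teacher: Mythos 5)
Your setup is sound: the reduction of the conditions in the abelian case, the observation that it suffices to find $w\neq 0$ with $f_1w=f_1(J_0w)=g w=0$, and the ``large kernel'' argument via $P=\ker f_1\cap J_0(\ker f_1)$ (where $g$ preserves $P$ and restricts to a nilpotent $\bC$-linear map, hence zero when $\dim P=2$) are all correct. But the proof is not complete: the decisive step --- excluding $\rk f_1=3$, and $\rk f_1=2$ with $\ker f_1$ not $J_0$-invariant --- is only announced (``one combines this with $f_i.(f_i.\omega_0)=0$ and (iv) to reach a contradiction''), and you yourself flag it as the main obstacle. Since these are exactly the configurations in which no $J_0$-invariant plane sits inside $\ker f_1$ or $\ker f_2$, the lemma is not proved until they are ruled out; as it stands the argument has a genuine gap precisely where the content lies.

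The paper closes this gap by a different and shorter mechanism, which never case-splits on $\rk f_1$ and never uses condition (iv) on $S$. Since $f_1,f_2$ are commuting nilpotents they have a common kernel vector $X_1$; setting $X_2\coloneqq -J_0X_1$, the relation $[h,J_0]=0$ with $h=f_2-J_0\circ f_1$ gives $f_2(X_2)=J_0 f_1(X_2)$, and $h\in\mathfrak{sp}(\bR^4,J_0,\omega_0)$ forces $\im(h)\subseteq\la X_1,X_2\ra$. If $f_1(X_2)\in\la X_1,X_2\ra$ is nonzero, a suitable linear combination of $f_1,f_2$ fixes $X_2$ modulo $X_1$ and is therefore not nilpotent; hence $f_1(X_2)=0$ and $W=\la X_1,X_2\ra$ works. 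Otherwise $X_3\coloneqq f_1(X_2)$, $X_4\coloneqq -J_0X_3$ complete an adapted basis, and evaluating the four identities $f_i.f_j.\omega_0=0$ on the pairs $(X_1,X_2)$, $(X_2,X_3)$, $(X_2,X_4)$ places $f_i(X_3),f_i(X_4)$ in both $\la X_1,X_2\ra$ and $\la X_3,X_4\ra$, hence equal to zero, so $W=\la X_3,X_4\ra$ works and in particular $\rk f_1\leq 1$. So your ``bad'' ranks in fact never occur, but establishing that is exactly the computation you have deferred; I would recommend replacing the rank analysis by the common-eigenvector argument, which both completes the proof and shows that condition (iv) is not needed.
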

\begin{proof}
Let $v\in V\setminus \{0\}$ and set $f_1$, $f_2$. We need to show that $f_1|_W=f_2|_W=0$. Thereto, we note that, as $\bR^4$ is Abelian, all one-forms are closed, so condition~(ii) in Lemma~\ref{le:oxidationdata} simplifies to $\left[f_1,f_2\right]=0$ and
condition~(iii) becomes $f_i.f_j.\omega_0=0$, for all $i,j=1,2$.

Since $f_1$ and $f_2$ are nilpotent and commute, there is a common eigenvector $X_1$ with eigenvalue $0$ of $f_1$ and $f_2$.
Moreover, Lemma~\ref{le:oxidationdata}~(i) states that $h\coloneqq f_2-J_0\circ f_1$ commutes with $J_0$ and so we have $h(X_2)=0$ for $X_2\coloneqq -J_0 X_1$ as well, i.e.~$f_2(X_2)=J_0(f_1(X_2))$.

Note that by Lemma~\ref{le:oxidationdata2} one can complete $\{X_1,X_2\}$ up to an adapted basis for $(\mfg_0,J_0,\omega_0)$. Hence, for any $Y\in \bR^4$ and any $i\in \{1,2\}$ the identity
$0=(h.\omega_0)(Y,X_i)=\omega_0(h(Y),X_i)$ holds,
which implies $\mathrm{im}(h)\subseteq \la X_1,X_2\ra^{\perp}=\la X_1,X_2\ra$.

First assume that $f_1(X_2)\in \la X_1,X_2\ra$. If $f_1(X_2)\neq 0$, then there is a linear combination
$\tilde{f}$ of $f_1$ and $f_2$ with $\tilde{f}(X_2)=X_2$, so $\tilde{f}$ is 
not nilpotent, which is a contradiction. Hence, we must have $f_1(X_2)=0$ and so $f_2(X_2)=0$ as well, i.e.~$W\coloneqq \la X_1,X_2\ra$ is a
subspace of $\bR^4$ as desired.

Next, assume that $X_3\coloneqq f_1(X_2)\notin \la X_1,X_2\ra$. Then, $f_2(X_2)=J_0 X_3\coloneqq -X_4$. 

On the one hand, we compute
\begin{align*}
0 &= (f_1.f_1.\omega_0)(X_1,X_2) = \omega_0\big(X_1,f_1(X_3)\big),\\
0 &= (f_1.f_2.\omega_0)(X_1,X_2) = \omega_0\big(X_1,f_2(X_3)\big)  \\
	& = \omega_0\big(X_1,h(X_3) + J_0\circ f_1(X_3)\big) 
	= \omega_0\big(J_0X_1, f_1(X_3)\big) = -\omega_0\big(X_2,f_1(X_3)\big),
\end{align*}
and similarly one also has 
\begin{align*}
0 &= (f_2.f_1.\omega_0)(X_1,X_2) = -\omega_0\big(X_1,f_1(X_4)\big),\\
0 &= (f_2.f_2.\omega_0)(X_1,X_2) = -\omega_0\big(X_1,f_2(X_4)\big) = \omega_0\big(X_2,f_1(X_4)\big).
\end{align*}
Hence, we can conclude $f_1(X_3),\, f_1(X_4)\in \langle X_1, X_2\rangle^{\perp}=
\langle X_1, X_2\rangle$. Moreover, observe that
\begin{align*}
\omega_0\big(X_2, f_2(X_3)\big) &= \omega_0\big(X_2, h(X_3) + J_0\circ f_1(X_3)\big) 
	= \omega_0\big(J_0X_2,f_1(X_3)\big)\\ 
	& =\omega_0\big(X_1,f_1(X_3)\big)=0,\\
\omega_0\big(X_2, f_2(X_4)\big) &= \omega_0\big(X_2, h(X_4) + J_0\circ f_1(X_4)\big) 
	= \omega_0\big(J_0X_2,f_1(X_4)\big)\\ &=\omega_0\big(X_1,f_1(X_4)\big)=0,\\
\end{align*}
so also $f_2(X_3),\, f_2(X_4)\in \langle X_1, X_2\rangle$.

On the other hand, since $(f_1\circ f_1)(X_2)=f_1(X_3)\in\langle X_1, X_2\rangle$ and 
$f_1$ is a nilpotent derivation,
one can ensure $f_1(X_3)\in\langle X_1\rangle$. Similarly, $f_2(X_4)\in\langle X_1\rangle$.
Now, as $f_1(X_1)=f_2(X_1)=0$ and $f_1$, $f_2$ commute, one concludes that $(f_i\circ f_j)(X_k)=0$, for
every $i,j=1,2$ and $k=3,4$. Consequently,
\begin{equation*}
\begin{split}
0&=(f_1.f_1.\omega_0)(X_2,X_3)=\omega_0(f_1(X_3),X_3)+2\omega_0(X_3,f_1(X_3))=-\omega_0(f_1(X_3),X_3),\\
0&=(f_1.f_2.\omega_0)(X_2,X_3)=-\omega_0(X_4,f_1(X_3)),\\
0&=(f_2.f_1.\omega_0)(X_2,X_4)=\omega_0(X_3,f_2(X_4)),\\
0&=(f_2.f_2.\omega_0)(X_2,X_4)=-\omega_0(f_2(X_4),X_4)-2\omega_0(X_4,f_2(X_4))=\omega_0(f_2(X_4),X_4).
\end{split}
\end{equation*}
This implies $f_1(X_3),\,f_2(X_4)\in\la X_3,X_4\ra^{\perp}=\la X_3,X_4\ra$.
Moreover, $f_1(X_4) =-f_1(f_2(X_2))=-f_2(f_1(X_2))=-f_2(X_3)$, thus
\begin{equation*}
\begin{split}
0&=(f_1.f_1.\omega_0)(X_2,X_4)=2\omega_0(X_3,f_1(X_4))=-2\omega_0(X_3,f_2(X_3)),\\
0&=(f_2.f_2.\omega_0)(X_2,X_3)=-2\omega_0(X_4,f_2(X_3))=2\omega_0(X_4,f_1(X_4)).
\end{split}
\end{equation*}
Hence, $f_1(X_4)=f_2(X_3)\in\la X_3,X_4\ra$ as well.

As a consequence, $f_1(X_3),\,f_1(X_4),\,f_2(X_3),\,f_2(X_ 4)\in 
\la X_1,X_2\ra \cap \la X_3,X_4\ra = \{0\}$, and we can conclude that
$W\coloneqq \la X_3,X_4\ra$ is a subspace of $\bR^4$ as desired.
\end{proof}

Now we are able to compute the possible complex symplectic oxidation data on $(\bR^4,J_0,\omega_0)$:
\begin{proposition}\label{pro:oxidationdataR4}
Let $(f,S,\tau)$ be a nilpotent complex symplectic oxidation datum on $(\bR^4,J_0,\omega_0)$. Then there exists a non-zero vector $v\in V$ and real numbers $\alpha_i$, $\beta_i$, $\gamma_i\in\bR$, $i=1,2,3,4$, such that, up to an isomorphism of complex symplectic oxidation data, we have
\[
S_{11} = \sum_{i=1}^4\alpha_ie^i, \qquad S_{22} = \sum_{i=1}^4\beta_ie^i, \qquad S_{12} = \sum_{i=1}^4\gamma_ie^i\,, 
\]
 and $f_1=f(v)=\left(\begin{smallmatrix}0 & 0\\ A & 0\end{smallmatrix}\right)$, $f_2=f(Iv)=\left(\begin{smallmatrix} 0 & 0\\ B & 0 \end{smallmatrix}\right)$, where the $2\times 2$-matrices $A,B$ have one of the following form:
\begin{itemize}
	\item[(i)] $A=\left(\begin{smallmatrix} a+1 & 0\\ 0 & a-1 \end{smallmatrix}\right)$, $B=\left(\begin{smallmatrix} b & -c-1\\ c-1 & b \end{smallmatrix}\right)$ with $a\geq 0$ and
	$(\alpha_3,\alpha_4,\beta_3,\beta_4,\gamma_3,\gamma_4)^T\in \bR^6$ lying in the kernel of
\begin{equation*}
L:=\left(\begin{smallmatrix} -b & \tfrac{a-2c+7}{2} & 0 & \tfrac{a+5}{2} & a+1 & 0 \\
                                \tfrac{2c-a+7}{2} & -b & \tfrac{5-a}{2} & 0 & 0 & a-1 \\
                                \tfrac{1-c}{2} & \tfrac{b}{2} & \tfrac{2a-c+3}{2} & \tfrac{b}{2} & -b & 1-c \\
                                -\tfrac{b}{2} & -\tfrac{c+1}{2} & -\tfrac{b}{2} & \tfrac{2a-c-3}{2} & \ c+1 & -b
        \end{smallmatrix}\right)\,;
\end{equation*}
  \item[(ii)] $A=\begin{pmatrix} 1 & 0\\ 0 & 1 \end{pmatrix}$, $B=\begin{pmatrix} a & -b\\ b & a \end{pmatrix}$ with $b\neq 0$ and $(a,b)\neq (0,1)$ and
	\begin{align*}
	  \beta_3 = b\,\alpha_3-a\,\alpha_4, \ \beta_4 = a\,\alpha_3+b\,\alpha_4, \ 
\gamma_3= \tfrac{a}{2}\,\alpha_3+\tfrac{b-1}{2}\,\alpha_4, \ \gamma_4= \tfrac{1-b}{2}\,\alpha_3+\tfrac{a}{2}\,\alpha_4\,;
	\end{align*}
	\item[(iii)] $A=\begin{pmatrix} 1 & 0\\ 0 & 1 \end{pmatrix}$, $B=\begin{pmatrix} 0 & -1\\ 1 & 0 \end{pmatrix}$ and $\gamma_3=\frac{1}{2}(\alpha_4-\beta_4)$, $\gamma_4=-\frac{1}{2}(\alpha_3-\beta_3)$;
	\item[(iv)] $A=\begin{pmatrix} 1 & 0\\ 0 & 1 \end{pmatrix}$, $B=0$ and $\beta_3=\beta_4=0$ and $\gamma_3=-\tfrac{1}{2}\,\alpha_4$, $\gamma_4=\tfrac{1}{2}\,\alpha_3$ or
	\item[(v)] $A=0$, $B=0$.
\end{itemize}
\end{proposition}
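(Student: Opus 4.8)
The plan is to mimic the proof of Proposition~\ref{pro:oxidationdatah3+R}, using that $\bR^4$ is abelian, so that $\mathrm{Der}(\bR^4)=\End(\bR^4)$ and $d_{\bR^4}=0$; then conditions (ii) and (iii) of Lemma~\ref{le:oxidationdata} become almost vacuous and all the content will lie in conditions (i), (iv) and nilpotency. First I would normalise $f$: choose $v\in V\setminus\{0\}$ and set $f_1=f(v)$, $f_2=f(Iv)$, which are nilpotent endomorphisms of $\bR^4$. By Lemma~\ref{le:oxidationdata1} there is a $J_0$-invariant plane $W$ with $f_1|_W=f_2|_W=0$, and by Lemma~\ref{le:oxidationdata2} we may --- after an isomorphism of oxidation data, namely a change of adapted basis --- assume $W=\la e_3,e_4\ra$. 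Then $h\coloneqq f_2-J_0f_1$ kills $W$; since $W$ is Lagrangian for $\omega_0$ and $h\in\mathfrak{sp}(\bR^4,J_0,\omega_0)$ by Lemma~\ref{le:oxidationdata}(i), we get $\mathrm{im}(h)\subseteq W^{\perp}=W$ and $[h,J_0]=0$. Writing $f_1,f_2,h$ in $2\times2$ blocks relative to $\la e_1,e_2\ra\oplus\la e_3,e_4\ra$, the vanishing on $W$ kills the right-hand blocks, while $\mathrm{im}(h)\subseteq W$ and $[h,J_0]=0$ force $f_1=\left(\begin{smallmatrix}A_1&0\\A_2&0\end{smallmatrix}\right)$, $f_2=\left(\begin{smallmatrix}B_1&0\\B_2&0\end{smallmatrix}\right)$ with $B_1=JA_1$ and $B_2-JA_2$ commuting with $J$, where $J=\left(\begin{smallmatrix}0&1\\-1&0\end{smallmatrix}\right)$ is the block of $J_0$.

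Now condition (ii) of Lemma~\ref{le:oxidationdata} reads $[f_1,f_2]=0$, and its $\la e_1,e_2\ra$-block is $[A_1,B_1]=A_1JA_1-JA_1^2=A_1JA_1=0$, using $A_1^2=0$ from nilpotency; a short computation (a nonzero nilpotent $2\times2$ matrix is $uw^{t}$ with $w^{t}u=0$, and then $A_1JA_1=(w^{t}Ju)\,uw^{t}\neq0$) rules out $A_1\neq0$, so $A_1=0$ and hence $B_1=0$. Setting $A\coloneqq A_2$, $B\coloneqq B_2$ we obtain $f_1=\left(\begin{smallmatrix}0&0\\A&0\end{smallmatrix}\right)$, $f_2=\left(\begin{smallmatrix}0&0\\B&0\end{smallmatrix}\right)$ with $B-JA$ commuting with $J$. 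With $f_1,f_2$ of this shape one checks that conditions (ii) and (iii) of Lemma~\ref{le:oxidationdata} hold automatically --- for (iii) one uses $d_{\bR^4}=0$ and that $\omega_0$ vanishes on $W\times W$ --- so they put no constraint on $S$.

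Next I would bring $(A,B)$ into normal form. Splitting $A,B$ into $J$-commuting parts ($\cong\bC$) and $J$-anticommuting parts ($\cong\bC$), the condition above says exactly that the anticommuting part of $B$ is $J$ times that of $A$. The residual symmetries acting on $(A,B)$ are $\GL(V,I)\cong\bC^{\ast}$, via $(A,B)\mapsto(\lambda A+\mu B,-\mu A+\lambda B)$ for $\lambda+i\mu\in\bC^{\ast}$, and the stabiliser of $W$ in $\Aut(\bR^4,J_0,\omega_0)$, whose Levi factor acts by $A\mapsto RAP^{-1}$, $B\mapsto RBP^{-1}$ (with $P$ commuting with $J$ and $R$ determined by $P$ through $\omega_0$) while its unipotent radical fixes $A,B$. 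Distinguishing the cases where the anticommuting part of $A$ is nonzero, and where both anticommuting parts vanish with $A,B$ (as complex numbers) either $\bR$-linearly independent, or $\bR$-proportional with at least one nonzero, or both zero, and using the two $\bC^{\ast}$-actions to rescale and rotate exactly as in the proof of Proposition~\ref{pro:oxidationdatah3+R}, one reaches the five shapes (i)--(v) for $(A,B)$; the restrictions $a\geq0$, $b\neq0$, $(a,b)\neq(0,1)$ record the residual freedom that has been used up.

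Finally I would read off the constraints on $S$. Write $S_{11}=\sum_k\alpha_ke^k$, $S_{22}=\sum_k\beta_ke^k$, $S_{12}=\sum_k\gamma_ke^k$; since $d_{\bR^4}=0$, the only surviving condition is Lemma~\ref{le:oxidationdata}(iv). As $f_1,f_2$ kill $\la e_3,e_4\ra$, map $\la e_1,e_2\ra$ into $\la e_3,e_4\ra$, and $\omega_0$ vanishes on $\la e_3,e_4\ra\times\la e_3,e_4\ra$, both sides of (iv) depend only on $\alpha_3,\alpha_4,\beta_3,\beta_4,\gamma_3,\gamma_4$, so $\alpha_1,\alpha_2,\beta_1,\beta_2,\gamma_1,\gamma_2$ remain free and (iv) reduces to a linear system in the remaining six coefficients. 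Plugging in each normal form (i)--(v) and computing $\bar\omega(f_i(\bar J(S_{11}^{\sharp}+S_{22}^{\sharp})),\cdot)$ and $(S_{11}+S_{22})\circ\bar J\circ f_i$ with the explicit $\sharp$ and $J_0$ of the adapted basis, one obtains the stated conditions --- the kernel of $L$ in case (i), the displayed relations in cases (ii)--(iv), and no condition in case (v) --- while $\tau$, which enters none of the conditions of Definition~\ref{def:oxidationdata}, stays free. The main obstacle is the orbit classification of $(A,B)$ in the middle step, where one must carefully combine the two $\bC^{\ast}$-actions with nilpotency across the several cases, just as for $\mfh_3\oplus\bR$; deriving the matrix $L$ and the relations in cases (ii)--(iv) from condition (iv) afterwards is routine but lengthy linear algebra.
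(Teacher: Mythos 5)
Your proposal is correct and follows essentially the same route as the paper's proof: reduce via Lemmas \ref{le:oxidationdata1} and \ref{le:oxidationdata2} to the block form $f_i=\left(\begin{smallmatrix}0&0\\ \ast&0\end{smallmatrix}\right)$, normalize $(A,B)$ using the $\GL(V,I)$-action together with the extra rotation automorphism of $(\bR^4,J_0,\omega_0)$ acting only on the $J$-commuting parts, and then reduce everything to the linear system coming from condition (iv) of Lemma \ref{le:oxidationdata}. The only (harmless) deviations are local: you kill the $\la e_1,e_2\ra\to\la e_1,e_2\ra$ blocks via $[f_1,f_2]=0$ combined with $A_1^2=0$, where the paper instead observes that a suitable linear combination of $f_1,f_2$ would otherwise have a nonzero eigenvalue, and you leave the final derivation of $L$ and of the relations in cases (ii)--(iv) as an asserted (but correctly set up) computation that the paper carries out explicitly.
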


\begin{proof}
Let $(f,S,\tau)$ be oxidation data on $\bR^4$ and $v\in V\setminus \{0\}$. Set $f_1:=f(v)$, $f_2:=f(Iv)$ and $h:=f_2-J_0\circ f_1$. By Lemma \ref{le:oxidationdata1}, we have a two-dimensional $J_0$-invariant subspace $W$ such that $f_1|_W=f_2|_W=0$ and by Lemma \ref{le:oxidationdata2}, there is an adapted basis $\{X_1,X_2,X_3,X_4\}$ for $(\bR^4,J_0,\omega_0)$ such that $X_3,X_4$ span $W$.

Now note that we have $h(X_3)=h(X_4)=0$ and so $\mathrm{im}(h)\subseteq 
\la X_3,X_4\ra^{\perp}= \la X_3,X_4\ra$ as for any $Y\in \bR^4$ and any $j\in \{3,4\}$ the identity $0=(h.\omega_0)(Y,X_j)=\omega_0(h(Y),X_j)$ is true. Consequently, if we consider the projection $\pi\colon\bR^4\to \la X_1,X_2\ra$ along $\la X_3,X_4\ra $, then we have $\pi\big( f_2(X_i) \big) = J_0\big( \pi\big(f_1(X_i)\big) \big)$, for any
$i=1,2$.
Hence, these projections have to be zero as otherwise we may arrange a linear combination
$\tilde{f}$ of $f_1$ and $f_2$ such that $\pi\big( \tilde{f}(X_i) \big)=X_i$ for some $i\in \{1,2\}$, a contradiction
to $\tilde{f}$ being nilpotent.

Thus, $f_1=\left(\begin{smallmatrix} 0 & 0 \\ A & 0 \end{smallmatrix}\right)$ and $f_2=\left(\begin{smallmatrix} 0 & 0 \\ B & 0 \end{smallmatrix}\right)$ for certain $A,B\in \bR^{2\times 2}$
with respect to the adapted basis $\{X_1,\ldots,X_4\}$. As in the proof of Proposition \ref{pro:oxidationdatah3+R}, we decompose $A=A_1+A_2$ and $B=B_1+B_2$ into
their $J$-commutative parts $A_1$, $B_1$ and $J$-anti-commutative
parts $A_2$ and $B_2$, being $J\coloneqq\left( \begin{smallmatrix} 0 & 1 \\ -1 & 0 \end{smallmatrix}\right)$. Then,
the condition $h\in \mathfrak{sp}(\bR^4,J_0,\omega_0)$ reduces to $B_2=J A_2$.
Arguing as in the proof of Proposition \ref{pro:oxidationdatah3+R},
we get the same possible forms for $A$ and $B$. Now note that rotating in the $\la X_1,X_2\ra$-plane by an angle of 
$\tfrac{\phi}{2}$ and in the $\la X_3,X_4\ra$-plane by an angle of $-\tfrac{\phi}{2}$ is an automorphism
of $(\bR^4,J_0,\omega_0)$. This automorphism does not change the $J$-anti-commutative parts $A_2$ and $B_2$ but rotates the $J$-commutative parts by an angle of $\phi$. Then:

In the first case, i.e. $A=\left(\begin{smallmatrix} a+1 & -b\\ b & a-1 \end{smallmatrix}\right)$,
$B=\left(\begin{smallmatrix} c & -d-1 \\ d-1 & c \end{smallmatrix}\right)$ for certain $a,b,c,d\in\bR$, we may apply this automorphism to be able to assume that $a\geq 0$ and $b=0$.

In the third case, i.e. $A=\left(\begin{smallmatrix} \cos(\phi) & -\sin(\phi)\\ \sin(\phi) & \cos(\phi) \end{smallmatrix}\right)$, $B=0$ for some $\phi\in \left[0,2\pi \right)$, we may apply this automorphism to be able to assume that $A=I_2$ and $B=0$. 

Hence, we have the following possibilities for $A$ and $B$:

\begin{itemize}
  \item $A=\left(\begin{smallmatrix} a+1 & 0 \\ 0 & a-1 \end{smallmatrix}\right)$, $B=\left(\begin{smallmatrix} b & -c-1 \\ c-1 & b \end{smallmatrix}\right)$ for certain $a,b,c\in\bR$ with $a\geq 0$,
  \item $A=\left(\begin{smallmatrix} 1 & 0\\ 0 & 1 \end{smallmatrix}\right)$, $B=\left(\begin{smallmatrix} a & -b \\ b & a \end{smallmatrix}\right)$ for certain $a,b\in\bR$ with $b\neq 0$,
  \item $A=\left(\begin{smallmatrix} 1 & 0\\ 0 & 1 \end{smallmatrix}\right)$, $B=0$ or
  \item $A=0$, $B=0$.
\end{itemize}
Now note that since $\bR^4$ is Abelian, conditions (i) -- (iii) in Lemma~\ref{le:oxidationdata} are fulfilled for all these $A$s and $B$s and we only have to look at condition 
(iv). Thereto, we write $S_{11}=\sum_{i=1}^4\alpha_ie^i$, $S_{22}=\sum_{i=1}^4\beta_ie^i$, $S_{12}=\sum_{i=1}^4\gamma_i e^i$ for $\alpha_i,\beta_i,\gamma_i\in\bR$
and obtain
\begin{equation*}
J_0(S_{11}^{\sharp}+S_{22}^{\sharp}) = (\alpha_3+\beta_3)e_1-(\alpha_4+\beta_4)e_2-(\alpha_1+\beta_1)e_3+(\alpha_2+\beta_2)e_4
\end{equation*}
as in the proof of Proposition \ref{pro:oxidationdatah3+R}. Note that condition (iv) in Lemma \ref{le:oxidationdata} is automatically fulfilled in the last case, giving us part $(v)$ in Proposition \ref{pro:oxidationdataR4}.  Thus, we only have to discuss the first three cases:

First assume that $A=\left(\begin{smallmatrix} a+1 & 0 \\ 0 & a-1 \end{smallmatrix}\right)$, $B=\left(\begin{smallmatrix} b & -c-1 \\ c-1 & b \end{smallmatrix}\right)$ for certain $a,b,c\in\bR$ with $a\geq 0$. Then condition $(iv)$ in Lemma \ref{le:oxidationdata} gives us
\begin{equation*}
\begin{split}
(-c+1)\alpha_4+(a+1)\gamma_3-b\alpha_3&=-\tfrac{a+5}{2}(\alpha_4+\beta_4),\\
(c+1)\alpha_3+(a-1)\gamma_4-b\alpha_4&=\tfrac{a-5}{2}(\alpha_3+\beta_3),\\
(-c+1)\gamma_4+(a+1)\beta_3-b\gamma_3&=\tfrac{c-1}{2}(\alpha_3+\beta_3)-\tfrac{b}{2}(\alpha_4+\beta_4),\\
(c+1)\gamma_3+(a-1)\beta_4-b\gamma_4&=\tfrac{c+1}{2}(\alpha_4+\beta_4)+\tfrac{b}{2}(\alpha_3+\beta_3).
\end{split}
\end{equation*}
This is equivalent to $(\alpha_3,\alpha_4,\beta_3,\beta_4,\gamma_3,\gamma_4)^T\in \bR^6$ lying in the kernel of
\begin{equation*}
L=\left(\begin{smallmatrix} -b & -c+1+\tfrac{a+5}{2} & 0 & \tfrac{a+5}{2} & a+1 & 0 \\
                                c+1-\tfrac{a-5}{2} & -b & -\tfrac{a-5}{2} & 0 & 0 & a-1 \\
                                -\tfrac{c-1}{2} & \tfrac{b}{2} & a+1-\tfrac{c-1}{2} & \tfrac{b}{2} & -b & -c+1 \\
                                -\tfrac{b}{2} & -\tfrac{c+1}{2} & -\tfrac{b}{2} & a-1-\tfrac{c+1}{2} & c+1 & -b
        \end{smallmatrix}\right)\,,
\end{equation*}
obtaining case (i) in the statement above.

Now we consider the case $A=\left(\begin{smallmatrix} 1 & 0\\ 0 & 1 \end{smallmatrix}\right)$, $B=\left(\begin{smallmatrix} a & -b\\ b & a \end{smallmatrix}\right)$ with $b\neq 0$. Then, by
condition (iv) in Lemma \ref{le:oxidationdata}, we have
\begin{equation*}
\begin{split}
  \gamma_3-a\alpha_3-b\alpha_4 &= -\tfrac{1}{2}(\alpha_4+\beta_4),\\
	\gamma_4+b\alpha_3-a\alpha_4 &= \tfrac{1}{2}(\alpha_3+\beta_3),\\
	\beta_3-a\gamma_3-b\gamma_4 &= \tfrac{b}{2}(\alpha_3+\beta_3)-\tfrac{a}{2}(\alpha_4+\beta_4),\\
	\beta_4+b\gamma_3-a\gamma_4 &= \tfrac{a}{2}(\alpha_3+\beta_3)+\tfrac{b}{2}(\alpha_4+\beta_4).
\end{split}
\end{equation*}
The first two equations imply
\begin{equation*}
\begin{split}
  \gamma_3 &= a\alpha_3+(b-\tfrac{1}{2})\alpha_4-\tfrac{1}{2}\beta_4,\\
	\gamma_4 &= (\tfrac{1}{2}-b)\alpha_3+a\alpha_4+\tfrac{1}{2}\beta_3.
\end{split}
\end{equation*}
Inserting into the last two equations gives
\begin{equation*}
\begin{split}
  (b^2-a^2-b)\alpha_3+a(1-2b)\alpha_4+(1-b)\beta_3+a\beta_4 &= 0,\\
	a(2b-1)\alpha_3+(b^2-a^2-b)\alpha_4-a\beta_3+(1-b)\beta_4 &= 0.
\end{split}
\end{equation*}
If $(a,b)\neq (0,1)$, then the above equations imply
\begin{equation*}
\begin{split}
\beta_3 &= b\alpha_3-a\alpha_4,\\
\beta_4 &= a\alpha_3+b\alpha_4.
\end{split}
\end{equation*}
This is case (ii). If $(a,b)=(0,1)$, then the equations give case (iii).

Finally, we have to consider the case $A=\left(\begin{smallmatrix} 1 & 0\\ 0 & 1 \end{smallmatrix}\right)$ and $B=0$. Then, the computations coincide with
those of the previous case, so it suffices to take $a=b=0$ in the above equations, which imply $\beta_3=\beta_4=0$ and $\gamma_3=-\tfrac{1}{2}\alpha_4$, $\gamma_4=\tfrac{1}{2}\alpha_3$. This is case (iv).
\end{proof}

\begin{corollary}\label{cor:nilpstep}
The nilpotency step of an 8-dimensional complex symplectic nilpotent Lie algebra is $\leq 4$
and all nilpotency steps between $1$ and $4$ may occur.
\end{corollary}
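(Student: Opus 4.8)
The plan is to combine the reduction theorem with the explicit classification of four-dimensional oxidation data. By Corollary~\ref{co:redox8d}, an $8$-dimensional complex symplectic nilpotent Lie algebra $(\mfg,J,\omega)$ is the complex symplectic oxidation $\mfg=V\oplus\bar\mfg\oplus V^*$ of $(\bR^4,J_0,\omega_0)$ or of $(\mfh_3\oplus\bR,J_0,\omega_0)$; since $\mfg$ is nilpotent and the action of $V$ on $\mfa^{\perp}/\mfa\cong\bar\mfg$ is given by $f$, the datum $f$ is nilpotent, so the classifications of Propositions~\ref{pro:oxidationdatah3+R} and~\ref{pro:oxidationdataR4} apply. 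The structural fact I extract from them is that in a suitable basis of $\bar\mfg$ every $f_v:=f(v)$, $v\in V$, is strictly block-lower-triangular with respect to a decomposition $\bar\mfg=U'\oplus U$ into two $2$-dimensional subspaces; hence $\mathrm{im}(f_v)\subseteq U\subseteq\ker f_v$ and $f_u\circ f_w=0$ for all $u,w\in V$. When $\bar\mfg=\mfh_3\oplus\bR$ one moreover has $U=\mfz(\bar\mfg)\supseteq[\bar\mfg,\bar\mfg]_{\bar\mfg}$, and the proof of Proposition~\ref{pro:oxidationdatah3+R} gives $\nu(V,V)\subseteq\mfz(\bar\mfg)$.

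For the upper bound I would trace the descending central series $\mfg^k$ through the oxidation. As the $V$-component of every bracket vanishes and $V^*$ is central, $\mfg^2\subseteq\bar\mfg\oplus V^*$, and with $W_k:=\pr_{\bar\mfg}(\mfg^k)$ the brackets \eqref{eq:Liebracketong1}--\eqref{eq:Liebracketong3} yield
\[
W_2\subseteq\nu(V,V)+f(V,\bar\mfg)+[\bar\mfg,\bar\mfg]_{\bar\mfg},\qquad W_{k+1}\subseteq f(V,W_k)+[\bar\mfg,W_k]_{\bar\mfg}\ \ (k\ge 2).
\]
If $\bar\mfg=\mfh_3\oplus\bR$, the first line gives $W_2\subseteq\mfz(\bar\mfg)=U$, which is annihilated by every $f_v$ and central in $\bar\mfg$, so $W_3=0$; hence $\mfg^3\subseteq V^*$ and $\mfg^4=[\mfg,\mfg^3]=0$, i.e.\ the step is $\le 3$. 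If $\bar\mfg=\bR^4$, then $[\bar\mfg,\cdot]_{\bar\mfg}=0$, so $W_{k+1}\subseteq f(V,W_k)$ for $k\ge 2$; using $f_u\circ f_w=0$ we get $W_3\subseteq f(V,\nu(V,V))$ and then $W_4\subseteq f(V,W_3)=0$, so $\mfg^4\subseteq V^*$ and $\mfg^5=0$, i.e.\ the step is $\le 4$. In either case the nilpotency step is at most $4$.

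For sharpness I would display oxidation data realizing each step. Step $1$: $\bR^8$, the oxidation of $(\bR^4,J_0,\omega_0)$ by $(0,0,0)$. Step $2$: $\mfh_3\oplus\bR^5$, the oxidation of $(\mfh_3\oplus\bR,J_0,\omega_0)$ by $(0,0,0)$. Step $3$: the oxidation of $(\mfh_3\oplus\bR,J_0,\omega_0)$ from case~(i) of Proposition~\ref{pro:oxidationdatah3+R} with $f_1e_1=2e_3$, $f_2e_2=-2e_3$ (all other values $0$), $S_{11}=e^2+e^4$, $S_{22}=-e^4$, $S_{12}=0$, $\tau=0$; a direct check shows the resulting $\mfg$ has $\mfg^3=\la\xi^1,\xi^2\ra\ne 0$ and $\mfg^4=0$. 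Step $4$: working in an adapted basis $\{X_1,\dots,X_4\}$ of $(\bR^4,J_0,\omega_0)$, the oxidation from case~(iv) of Proposition~\ref{pro:oxidationdataR4} with $f_1X_1=X_3$, $f_1X_2=X_4$, $f_2=0$, $S_{11}=X^3$, $S_{22}=0$, $S_{12}=\tfrac12X^4$, $\tau=0$; its non-zero brackets are $[v_1,v_2]=X_1$, $[v_1,X_1]=X_3$, $[v_1,X_2]=X_4$, $[v_1,X_3]=\xi^1$, $[v_1,X_4]=\xi^2$, whence $\mfg^4=\la\xi^1\ra\ne 0$ and $\mfg^5=0$.

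The only genuinely non-routine input is the description of the derivations $f_v$ coming from Propositions~\ref{pro:oxidationdatah3+R} and~\ref{pro:oxidationdataR4} — that each squares to zero, and that in the Heisenberg case already the first nonabelian layer of $\mfg$, together with $\nu$, is absorbed into $\mfz(\bar\mfg)$. Granting this, the upper bound is a short descending-central-series estimate; the main care needed is in verifying that the displayed data in the last two cases really are complex symplectic oxidation data and in computing their central series, so that the bound $4$ is seen to be attained.
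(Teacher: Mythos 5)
Your proposal is correct and follows essentially the same route as the paper: both reduce to Corollary~\ref{co:redox8d} and the classifications in Propositions~\ref{pro:oxidationdatah3+R} and~\ref{pro:oxidationdataR4}, extract the fact that every $f(v)$ maps into a two-dimensional $\bar\mfg$-central subspace $U\supseteq[\bar\mfg,\bar\mfg]_{\bar\mfg}$ contained in its kernel, bound a central series by tracing the brackets \eqref{eq:Liebracketong1}--\eqref{eq:Liebracketong3}, and exhibit explicit oxidation data realizing each step from $1$ to $4$. The only (harmless) deviations are that you run the descending rather than the ascending central series --- which as a by-product shows that oxidations of $(\mfh_3\oplus\bR,J_0,\omega_0)$ are at most $3$-step --- and that your step-$3$ and step-$4$ examples differ from the paper's (your step $3$ comes from oxidizing $\mfh_3\oplus\bR$ rather than $\bR^4$); I verified that your displayed data satisfy the constraints of the cited cases and that the resulting brackets and central series are as you claim.
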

\begin{proof}
By Corollary \ref{co:redox8d}, any 8-dimensional complex symplectic nilpotent Lie algebra $(\mfg,J,\omega)$
is the complex symplectic oxidation a four-dimensional complex symplectic Lie algebra. Hence, it is obtainable by the complex symplectic 
oxidation data on $(\mfh_3\oplus \bR,J_0,\omega_0)$ given in Proposition \ref{pro:oxidationdatah3+R} or by the complex symplectic 
oxidation data on $(\bR^4,J_0,\omega_0)$ given in Proposition \ref{pro:oxidationdataR4}. In the following, we use the notation for 
complex symplectic oxidation from Section \ref{oxidation-reduction} and note that we have $\mfg=V\oplus \bar\mfg\oplus V^*$ as vector 
space with $\bar\mfg\in\{\mfh_3\oplus \bR,\bR^4\}$. Surely, we have $V^*\subseteq \mfg_1$. Then we see from 
Proposition \ref{pro:oxidationdatah3+R} and Proposition \ref{pro:oxidationdataR4} that in both cases there is a two-dimensional 
subspace $U$ such that $\im(f(v))\subseteq U\subseteq \ker(f(v))$ for all $v\in V$ and such that $U$ is $\bar\mfg$-central and contains
the commutator ideal of $\bar\mfg$. From the form of the Lie bracket of $\mfg$ given in
\eqref{eq:Liebracketong1} -- \eqref{eq:Liebracketong3}, we deduce that $[U,\mfg]_{\mfg}\subseteq V^*\subseteq \mfg_1$ and so $U\oplus V^*\subseteq \mfg_2$. But then the same equations imply $[\bar\mfg,\mfg]_{\mfg}\subseteq V^*\oplus U\subseteq \mfg_2$,
i.e. $\bar\mfg\oplus V^*\subseteq \mfg_3$. Finally, the mentioned equations show that $[\mfg,\mfg]\subseteq \bar\mfg\oplus V^*\subseteq \mfg_3$ and so $\mfg=\mfg_4$, i.e. the nilpotency step of $\mfg$ is at most~$4$.

Now for the existence of $8$-dimensional nilpotent complex symplectic Lie algebras with nilpotency step $1$ -- $4$ note that $\bR^8$ and $\mfh_3\oplus \bR^5$ provide examples of nilpotency step $1$ and $2$, respectively, obtainable from $(\bR^4,J_0,\omega_0)$ and $(\mfh_3\oplus \bR,J_0,\omega_0)$, respectively, with trivial complex symplectic oxidation data.

A nilpotency step $3$ example may be obtained by choosing $\tau=0$, $S_{11}=e^3$, $S_{12}=e^1$ and $S_{22}=0$ in case (v) in Proposition \ref{pro:oxidationdataR4}. Then the non-zero Lie brackets (up to anti-symmetry) on the vector space $\mfg=V\oplus \bR^4\oplus V^*$ with respect to the basis $\{v_1,v_2,e_1,\ldots,e_4,v^1,v^2\}$ are given by
\begin{equation*}
[v_1,v_2]=e_1,[v_1,e_1]=v^2, [v_1,e_3]=v^1, [v_1,e_4]=\tfrac{1}{2}v^2, [v_2,e_1]=v^1, [v_2,e_4]=-\tfrac{1}{2}v^1
\end{equation*}
and one has $\mfg_1=\mathfrak{z}(\mfg)=V^*$, $\mfg_2=\bar\mfg\oplus V^*$ and $\mfg_3=\mfg$.

Finally, a nilpotency step $4$ example may be obtained by choosing $\tau=0$, $S_{11}=e^3$, $S_{12}=e^1$ and $S_{22}=\tfrac{1}{2}e^4$ in case (iv) in Proposition \ref{pro:oxidationdataR4}. In this case, the non-zero Lie brackets (up to anti-symmetry) on the vector space $\mfg=V\oplus \bR^4\oplus V^*$ with respect to the basis $\{v_1,v_2,e_1,\ldots,e_4,v^1,v^2\}$ are given by
\begin{equation*}
\begin{split}
[v_1,v_2]&=e_1-\tfrac{1}{2} e_2,\quad [v_1,e_1]=e_3+v^2,\quad [v_1,e_2]=e_4, \quad [v_1,e_3]=v^1-\tfrac{1}{4} v^2,\\
[v_1,e_4]&=\tfrac{1}{2}v^2,\quad [v_2,e_1]=v^1,\quad [v_2,e_3]=\tfrac{1}{4}v^1,\quad [v_2,e_4]=-\tfrac{1}{2}v^1+\tfrac{1}{2}v^2.
\end{split}
\end{equation*}
Here, we have $\mfg_1=\mathfrak{z}(\mfg)=V^*$, $\mfg_2=\la e_3,e_4\ra\oplus V^*$, $\mfg_3=\bar\mfg\oplus V^*$
and $\mfg_4=\mfg$.
\end{proof}

\begin{remark}\label{rem:nilpstep}
It was shown in \cite[Theorem 2.1]{Dotti-Fino} that an 8-dimensional hypercomplex nilpotent Lie algebra is necessarily 2-step nilpotent. Compared to this, the existence of complex symplectic structures on nilpotent Lie algebras seems to be less restrictive. On the other hand, the 8-dimensional Lie algebra $(0,0,12,13,14,15,16,17)$ has nilpotency step 7 and carries the symplectic structure $\omega=e^{18}+e^{27}-e^{36}+e^{45}$. Moreover, the 8-dimensional Lie algebra obtained by setting $A=E=F=H=K=M=P=s=0$, $B=-1$ and $C=D=G=L=N=1$ in Theorem \ref{SnN-8-d} (i) has ascending type $(1,3,5,6,8)$ and nilpotency step 5; its center is 1-dimensional, hence the complex structure is strongly non-nilpotent and the Lie algebra admits no complex symplectic structure.
\end{remark}

\section{Complex symplectic structures on nilmanifolds and solvmanifolds}\label{examples}

In this section we apply the previous constructions to provide several examples of compact manifolds with complex symplectic structures. Our examples will be nilmanifolds and solvmanifolds; as we recalled in the introduction, a nilmanifold (resp. solvmanifold) is the quotient of a connected, simply connected nilpotent (resp. solvable) Lie group $G$ by a lattice~$\Gamma$ (i.e.~a discrete and cocompact subgroup). Every such $G$ is homeomorphic to $\bR^n$ for some $n$, and the natural projection $G\to \Gamma\backslash G$ exhibits $G$ as the universal cover of $\Gamma\backslash G$; thus $\pi_1(\Gamma\backslash G)=\Gamma$ and nilmanifolds (resp. solvmanifolds) are aspherical spaces. 

By a result of Mal'tsev \cite{Malcev}, a connected, simply connected, nilpotent Lie group $G$ contains a lattice if and only if the associated Lie algebra $\mfg$ has a rational structure, that is, a rational subalgebra $\mfg_\bQ\subset \mfg$ such that $\mfg=\mfg_\bQ\otimes\bR$. Nothing is known, in general, about lattices in solvable Lie groups.

If $G$ is connected, simply connected and nilpotent, Nomizu's theorem \cite{Nomizu} says that the de Rham cohomology of the nilmanifold $\Gamma\backslash G$ is isomorphic to the Lie algebra cohomology of $\mfg$, $H^\bullet_{\textrm{dR}}(\Gamma\backslash G)\cong H^\bullet(\mfg^*)$. Hattori proved an analogous result for \emph{completely solvable} solvmanifolds, see \cite{Hattori}.

As a consequence of Nomizu's theorem, if a nilmanifold has a symplectic structure it also has a left-invariant one. By Hattori's result, the same happens for completely solvable solvmanifolds. Concerning complex symplectic structures, we have the following refinement:

\begin{proposition}\label{prop:invariance}
Let $N=\Gamma\backslash G$ be a solvmanifold endowed with a left-invariant complex structure $J$
such that $H^\bullet_{\textrm{dR}}(\Gamma\backslash G)\cong H^\bullet(\mfg^*)$.
If $(N,J)$ admits a complex symplectic structure $\tilde\omega$ (not necessarily left-invariant), then there exists a left-invariant complex symplectic structure $(J,\omega)$ on $N$.
\end{proposition}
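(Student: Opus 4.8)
The plan is to produce the invariant structure by averaging $\tilde\omega$ over the compact manifold $N=\Gamma\backslash G$ and then to check that non-degeneracy survives the averaging. First I would replace $\tilde\omega$ by the associated closed, non-degenerate $(2,0)$-form $\tilde\omega_\bC$, equal to $\tilde\omega-i\,\tilde\omega(J\cdot,\cdot)$ if $\tilde\omega$ is read as the real symplectic $2$-form (Lemma~\ref{le:equivalencecomplexsymplectic}), and record that $N$ then has real dimension $4n$ and complex dimension $2n$. Since $G$ admits a lattice it is unimodular, so $N$ carries a $G$-invariant volume and one has the \emph{symmetrization} operator $\mathrm{sym}\colon\Omega^\bullet(N;\bC)\to\Lambda^\bullet\mfg^*\otimes\bC$ given by integration over $N$; I would invoke its standard properties: it is a cochain map for $d$ and the Chevalley--Eilenberg differential, it restricts to the identity on left-invariant forms, and — because $J$ is left-invariant, so the $(p,q)$-decomposition of $\Omega^\bullet(N;\bC)$ is the $G$-invariant one induced from $\mfg$ — it sends forms of type $(p,q)$ to elements of type $(p,q)$ in $\Lambda^\bullet\mfg^*\otimes\bC$. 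Finally, the hypothesis $H^\bullet_{\mathrm{dR}}(\Gamma\backslash G)\cong H^\bullet(\mfg^*)$ is precisely the statement that the inclusion $\iota\colon(\Lambda^\bullet\mfg^*,d)\hookrightarrow(\Omega^\bullet(N),d)$ induces an isomorphism $\iota^*$ on cohomology; since $\mathrm{sym}\circ\iota=\mathrm{id}$, the induced map $\mathrm{sym}^*$ equals $(\iota^*)^{-1}$, hence is a ring isomorphism, and $[\iota(\mathrm{sym}(\alpha))]=[\alpha]$ for every closed $\alpha$ (all of this also after complexification).

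Then I would set $\omega\coloneqq\mathrm{sym}(\tilde\omega_\bC)$: by the above it is a left-invariant, closed form of type $(2,0)$, so the only remaining point is non-degeneracy, $\omega^n\neq0$. This is the heart of the argument, and the key idea is to work in top degree rather than with $\omega$ (degree $2$) or $\omega^n$ (degree $2n$): consider the $(2n,2n)$-form $\tilde\omega_\bC^n\wedge\overline{\tilde\omega_\bC^n}$, which has real degree $4n$. Since $\tilde\omega_\bC$ is non-degenerate, $\tilde\omega_\bC^n$ is nowhere vanishing, so $\tilde\omega_\bC^n\wedge\overline{\tilde\omega_\bC^n}$ is a real, nowhere-vanishing $4n$-form on the compact connected manifold $N$, whence $[\tilde\omega_\bC^n\wedge\overline{\tilde\omega_\bC^n}]\neq0$ in $H^{4n}_{\mathrm{dR}}(N;\bC)\cong\bC$. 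Applying the ring isomorphism $\mathrm{sym}^*$, and using $[\omega]=\mathrm{sym}^*[\tilde\omega_\bC]$ and its conjugate, gives
\[
[\omega^n\wedge\overline{\omega^n}]=[\omega]^n\cup[\bar\omega]^n=\mathrm{sym}^*\!\big([\tilde\omega_\bC]^n\cup[\overline{\tilde\omega_\bC}]^n\big)=\mathrm{sym}^*[\tilde\omega_\bC^n\wedge\overline{\tilde\omega_\bC^n}]\neq0
\]
in $H^{4n}(\mfg^*\otimes\bC)$. As this group is a quotient of $\Lambda^{4n}\mfg^*\otimes\bC$, the left-invariant form $\omega^n\wedge\overline{\omega^n}$ is nonzero, and therefore $\omega^n\neq0$. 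Thus $(J,\omega)$ — equivalently $(J,\Re\omega)$ in the real formulation of Lemma~\ref{le:equivalencecomplexsymplectic} — is a left-invariant complex symplectic structure on $N$, proving the proposition.

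The step I expect to be the main obstacle is exactly this non-degeneracy of $\omega$: symmetrization is not multiplicative, so one cannot argue directly with forms, and neither $\omega$ nor $\omega^n$ sits in top degree, so their cohomology classes alone carry no non-degeneracy information; the resolution is to transport the non-vanishing of the genuinely top-degree class $[\tilde\omega_\bC^n\wedge\overline{\tilde\omega_\bC^n}]$ through the ring isomorphism that symmetrization induces on cohomology. The remaining ingredients — that $\mathrm{sym}$ commutes with the differentials, fixes left-invariant forms, preserves the $(p,q)$-type when $J$ is left-invariant, and realizes the inverse of $\iota^*$ on cohomology — are by now routine in the nilmanifold and solvmanifold literature, so in the write-up I would quote them rather than reprove them.
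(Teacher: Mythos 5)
Your argument is correct, and it reaches the same two milestones as the paper's proof --- an invariant representative in the right cohomology class, then non-degeneracy via a top-degree class --- but by a somewhat different route. Where you average $\tilde\omega_\bC$ explicitly with the Belgun-type symmetrization operator and use that it preserves the $(p,q)$-type and induces the ring isomorphism $(\iota^*)^{-1}$ on cohomology, the paper instead works with the real anti-$J$-invariant form $\tilde\omega=\Re(\tilde\omega_\bC)$ and quotes Theorem 5.4 of Angella--Tomassini--Zhang, which says precisely that the hypothesis $H^\bullet_{\mathrm{dR}}(\Gamma\backslash G)\cong H^\bullet(\mfg^*)$ forces $H_J^-(N)\cong H_J^-(\mfg)$; this hands them a closed, left-invariant, anti-$J$-invariant $\omega$ with $[\omega]=[\tilde\omega]$ in one line (the averaging is hidden inside the citation). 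The non-degeneracy step also differs in packaging: since the paper's $\omega$ is the real $2$-form, $\omega^n$ with $2n=\dim_{\bR}N$ is already of top degree, so $0\neq[\tilde\omega]^n=[\omega^n]$ immediately forces the invariant top form $\omega^n$ to be nonzero; because you work with the $(2,0)$-form, whose $n$-th power only reaches half the top degree, you need the extra (correct) device of transporting the nowhere-vanishing class $[\tilde\omega_\bC^n\wedge\overline{\tilde\omega_\bC^n}]$ through the ring isomorphism and then peeling off $\omega^n\neq0$. Your version is more self-contained (no appeal to the $H_J^\pm$ machinery, only to standard symmetrization facts), at the cost of the slightly longer non-degeneracy argument; the paper's is shorter but outsources the key cohomological input. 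Both are valid proofs of the proposition.
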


To prove the proposition, we need to recall a few facts. Let $N$ be a $2n$-dimensional manifold. An almost complex structure $J$ on $N$ acts on the space of real $2$-forms $\Omega^2(N)$ as an involution:
\[
\begin{array}{rrcl}
    J^*\colon  & \Omega^2(N) & \longrightarrow & \Omega^2(N). \\[4pt]
	& \omega(\cdot,\cdot) & \longrightarrow& \omega(J\cdot,J\cdot)
\end{array} 
\]

Therefore, one has the splitting
\[
\Omega^2(N)=\Omega^+(N) \oplus \Omega^-(N)\,, 
\]
where
$\Omega^\pm (N)=\{\omega\in\Omega^2(N) \mid J^*\omega=\pm \omega \}$.

Denote by $\mathcal Z_J^+(N)$ (resp.~$\mathcal Z_J^-(N))$ the space of closed $2$-forms that are $J$-invariant (resp. $J$-anti-invariant). The following subspaces were introduced in \cite{Li-Zhang}, in relation with Donaldson's ``tamed to compatible'' conjecture:

\[
H_J^{\pm}(N):=\{[ \alpha ] \in H_{\textrm{dR}}^2(N) \mid \alpha\in \mathcal Z_J^\pm(N)\}\subseteq H_{\textrm{dR}}^2(N)\,. 
\]

Suppose $(J,\omega)$ is a complex symplectic structure on a manifold $N$; then $\omega(J\cdot, \cdot)=\omega(\cdot, J\cdot)$ or, equivalently, $\omega(J\cdot, J\cdot)=-\omega(\cdot, \cdot)$, so $\omega\in\Omega^-(N)$. Since $d\omega$ is closed and non-degenerate, $0\neq [\omega]\in H_J^-(N)$.


\begin{proof}[Proof of the proposition]
Let $\mathfrak g$ be the (real) Lie algebra of $G$. As a consequence of \cite[Theorem 5.4]{Angella-Tomassini-Zhang}, the condition $H^\bullet_{\textrm{dR}}(\Gamma\backslash G)\cong H^\bullet(\mfg^*)$ implies that $H_J^-(N)\cong H_J^-(\mathfrak g)$. Therefore, one can find a left-invariant $2$-form $\omega\in\Lambda^2\mfg^*$ such that $[\omega]=[\tilde\omega]\in H_J^-(N)$. Observe that $\omega$ is anti-$J$-invariant and satisfies $d\omega=0$. To prove the result, it suffices to check that $\omega^n\neq 0$, where $2n=\dim N$. 
Notice that
$$0\neq [\tilde\omega]^n=[\omega]^n=[\omega^n],$$
where $\omega^n\in\Lambda^{2n}\mfg^*$ is a top degree form. Consequently, if $\{e^1,\dots,e^{2n}\}$ is a basis for $\mfg^*$,
then $\omega^n=k\,e^{1}\wedge\dots\wedge e^{2n}$, for some $k\in\mathbb R$. Due to the previous equality between cohomology classes, 
$k\neq 0$ and thus $\omega^n\neq 0$, as desired.
\end{proof}

Combining Proposition~\ref{hs-implica-nilp} with the previous result, we can conclude the following:

\begin{corollary}
Let $N$ be an $8$-dimensional nilmanifold endowed with an invariant complex structure $J$.
If the Lie algebra underlying $N$ has $1$-dimensional center, then $(N,J)$ cannot admit
a complex symplectic structure.
\end{corollary}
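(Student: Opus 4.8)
The plan is to reduce the statement to the Lie algebra level via Proposition~\ref{prop:invariance} and then invoke Proposition~\ref{hs-implica-nilp}. Let $\mfg$ be the nilpotent Lie algebra underlying $N=\Gamma\backslash G$, so that $\dim\mfg=8$ and $\dim\mfz(\mfg)=1$, and regard the given invariant complex structure $J$ as a complex structure on $\mfg$.

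First I would observe that $J$ must be strongly non-nilpotent. As recalled in Section~\ref{explicit-8-d}, $\mfa_1(J)=\mfz(\mfg)\cap J\mfz(\mfg)$. Since $J\mfz(\mfg)$ is again one-dimensional, the subspace $\mfz(\mfg)\cap J\mfz(\mfg)$ of the one-dimensional space $\mfz(\mfg)$ is either $\{0\}$ or all of $\mfz(\mfg)$; in the latter case $\mfz(\mfg)=J\mfz(\mfg)$ would be a $J$-invariant, hence even-dimensional, subspace, contradicting $\dim\mfz(\mfg)=1$. Therefore $\mfa_1(J)=\{0\}$, i.e.\ $J$ is SnN, and by Proposition~\ref{hs-implica-nilp} there is no $\omega$ for which $(\mfg,J,\omega)$ is a complex symplectic Lie algebra.

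Next I would argue by contradiction on the manifold side. Suppose $(N,J)$ carries a complex symplectic structure $\tilde\omega$, not assumed to be invariant. Since $G$ is nilpotent and $N$ is a nilmanifold, Nomizu's theorem \cite{Nomizu} gives $H^\bullet_{\textrm{dR}}(\Gamma\backslash G)\cong H^\bullet(\mfg^*)$, so the hypotheses of Proposition~\ref{prop:invariance} are met. That proposition then yields a left-invariant complex symplectic structure $(J,\omega)$ on $N$, equivalently a complex symplectic structure on $\mfg$ with underlying complex structure $J$, contradicting the previous paragraph. Hence $(N,J)$ admits no complex symplectic structure.

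The only potentially delicate point is the passage from the a priori non-invariant $\tilde\omega$ to an invariant $\omega$, which rests entirely on Proposition~\ref{prop:invariance} (whose proof in turn uses the isomorphism $H_J^-(N)\cong H_J^-(\mfg)$ of \cite{Angella-Tomassini-Zhang}); everything else is a short linear-algebra remark on $\mfz(\mfg)$ together with the classification-based Proposition~\ref{hs-implica-nilp}, so no further computation is required.
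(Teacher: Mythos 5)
Your proof is correct and follows exactly the paper's intended route: Nomizu's theorem makes Proposition~\ref{prop:invariance} applicable to reduce to an invariant structure, and Proposition~\ref{hs-implica-nilp} then rules it out. You also usefully make explicit the small linking step the paper leaves implicit, namely that a one-dimensional center cannot be $J$-invariant, so $\mfa_1(J)=\mfz(\mfg)\cap J\mfz(\mfg)=\{0\}$ and $J$ is strongly non-nilpotent.
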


\begin{example}\label{ex:csandhc}

Consider the 8-dimensional 2-step nilpotent Lie algebra
\[
 \mfg=\mfq\mfh_7\oplus\bR=(0,0,0,0,0,12-34,13+24,14-23)\,,
\]
which is the direct sum of the quaternionic Heisenberg algebra $\mfq\mfh_7$ with $\bR$.
$\mfg$ is precisely the Lie algebra denoted by $\mathfrak n_3$ in \cite{Dotti-Fino}. 
Consider the complex structures $I$, $J$ and $K$ whose spaces of $(1,0)$-forms are, respectively,
\begin{table}[ht!]
\begin{center}
\begin{tabular}{cccc}
{\tabulinesep=1.2mm
\begin{tabu}{ccc}
  $\varphi_I^1$\!\! & $=$\!\! & $e^1+ie^2$,\\
  $\varphi_J^1$\!\! & $=$\!\! & $e^1+ie^3$,\\
  $\varphi_K^1$\!\! & $=$\!\! & $e^1+ie^4$,
 \end{tabu}}
 &\!\!\!\!
 {\tabulinesep=1.2mm
 \begin{tabu}{ccc}
  $\varphi_I^2$\!\! & $=$\!\! & $e^3+ie^4$,\\
  $\varphi_J^2$\!\! & $=$\!\! & $e^2-ie^4$,\\
  $\varphi_K^2$\!\! & $=$\!\! & $e^2+ie^3$,
 \end{tabu}}
 &\!\!\!\!
 {\tabulinesep=1.2mm
 \begin{tabu}{ccc}
  $\varphi_I^3$\!\! & $=$\!\! & $e^5+ie^6$,\\
  $\varphi_J^3$\!\! & $=$\!\! & $e^5+ie^7$,\\
  $\varphi_K^3$\!\! & $=$\!\! & $e^5+ie^8$,
 \end{tabu}}
 &\!\!\!\!
 {\tabulinesep=1.2mm
 \begin{tabu}{ccc}
  $\varphi_I^4$\!\! & $=$\!\! & $e^7+ie^8$,\\
  $\varphi_J^4$\!\! & $=$\!\! & $e^6-ie^8$,\\
  $\varphi_K^4$\!\! & $=$\!\! & $e^6+ie^7$.
 \end{tabu}}
\end{tabular}
\end{center}
\end{table}

According to \cite[Page 54]{Dotti-Fino}, $\{I,J,K\}$ is a hypercomplex structure. Moreover, one checks easily that $\omega=\frac{1}{2}e^{18}+\frac{1}{2}e^{27}+e^{36}+e^{45}$ is a symplectic form and $(I,\omega)$ is a complex symplectic structure. $\mfg$ has an obvious rational structure, hence $G$, the unique connected, simply connected nilpotent Lie group with Lie algebra $\mfg$, contains a lattice $\Gamma$, and $N=\Gamma\backslash G$ is a compact nilmanifold with a hypercomplex and a complex symplectic structure, both invariant. Using Nomizu's theorem, one sees that $b_1(N)=5$, hence $N$ is not diffeomorphic to the torus $T^8$, which, by a result of Hasegawa \cite{Hasegawa}, is a necessary condition for a nilmanifold to admit a K\"ahler structure. Thus, $N$ has also no hyperk\"ahler structure.

Consider the $I$-invariant, central ideal $\mfa\subset\mfg$ with basis $\{e_6,e_5\}$; notice that $\mfa\cong V^*$, where $V$ has basis $\{-e_3,-e_4\}$. Then $\mfa^\perp/\mfa$ is isomorphic to
$\bR^4$ with basis $\left\{\sqrt{2}e_2,\sqrt{2}e_1,\sqrt{2}e_8,\sqrt{2}e_7\right\}$. The complex symplectic Lie algebra $(\mfg,I,\omega)$ can be obtained from Proposition~\ref{pro:oxidationdataR4} case (i)
with $a=b=c=0$, $\tau(e_3,e_4)=e_6$ and $S=0$.

\end{example}

\begin{example}\label{ex:hcnocomplsymp}
Consider the 8-dimensional 2-step nilpotent Lie algebra
\[
\mfg=(0,0,0,0,0,0,0,12-34)\,;
\]
notice that $\mfg$ is isomorphic to $\mfh_5\oplus\bR^3$, where $\mfh_{2n+1}$ is the Heisenberg Lie algebra, with basis $\{X_1,Y_1,\ldots,X_n,Y_n,Z\}$ and non-zero brackets $[X_i,Y_i]=Z$ for $i=1,\ldots,n$. It is easy to see that $\mfh_{2n+1}\oplus\bR^{2m+1}$ is not symplectic for $n\geq 2$, so in particular, it is not complex symplectic (for $n=1$, recall that $\mfh_3\oplus\mathbb R^5$ admits a complex symplectic structure, as remarked in the proof of Corollary~\ref{cor:nilpstep}). Notice that $\mfg$ is obtained by setting all parameters equal to zero in \cite[Page 54]{Dotti-Fino}, except for $d_1=1$, hence the complex structures $I$, $J$ and $K$ described in the previous example give a hypercomplex structure $\{I,J,K\}$ on $\mfg$.
Since $\mfg$ has a rational structure, $G$, the unique connected, simply connected nilpotent Lie group with Lie algebra $\mfg$ contains a lattice $\Gamma$, and $N=\Gamma\backslash G$ is a compact nilmanifold with a left-invariant hypercomplex structure. By Nomizu's theorem, $N$ has no symplectic structure, hence, in particular, no complex symplectic structure.

\end{example}

\begin{example}
 Consider the family of Lie algebras $\mfg=\mfg(A,B,C)$ with complex structure $J$ given in terms of a basis $\{\varphi^1,\ldots,\varphi^4\}$ of $\mfg^{*(1,0)}$ with structure equations
\[\begin{cases}
d\varphi^1=d\varphi^2=0,\\
d\varphi^3=\varphi^{1\bar2},\\
d\varphi^4= A\,\varphi^{13} + B\,\varphi^{1\bar1} + C\,\varphi^{2\bar 2},
\end{cases}
\]
where $A,B,C\in\mathbb C$ are such that $AC\neq 0$. The complex 2-form
\begin{equation}\label{csympform}
\omega_\bC=\alpha\,\varphi^{12}+\beta\,\varphi^{13}+\gamma\,\varphi^{14}+C\gamma\,\varphi^{23}, \quad \alpha,\beta,\gamma\in\mathbb C, \quad\gamma\neq 0 
\end{equation}
is in $\Lambda^2\mfg^{*(1,0)}$ and is closed and non-degenerate, hence $(J,\omega_\bC)$ is a complex symplectic structure on $\mfg$. Defining the following real basis of $\mfg$
\[
\begin{cases}
e^1 = \frac 12\, (\varphi^1 + \varphi^{\bar 1})\\
e^2 = \frac 12\, (\varphi^2 + \varphi^{\bar 2})\\
e^3 = -\frac i2\, (\varphi^1 - \varphi^{\bar 1})\\
e^4 = -\frac i2\, (\varphi^2 - \varphi^{\bar 2})
\end{cases}
\qquad
\begin{cases}
e^5 = \frac 12\, (\varphi^3 + \varphi^{\bar 3})\\
e^6 = \frac i2\, (\varphi^3 - \varphi^{\bar 3})\\
e^7 = \frac 12\, (\varphi^4 + \varphi^{\bar 4})\\
e^8 = -\frac i2\, (\varphi^4 - \varphi^{\bar 4})
\end{cases}
\]
we get the real structure equations of $\mfg$:
\[
\begin{cases}
de^1 = de^2 = de^3 = de^4 = 0,\\
de^5 = e^{12} + e^{34},\\
de^6 = e^{14} + e^{23},\\
de^7 = 2\,\im\,B\,e^{13} + \re\,A\,e^{15} + \im\,A\,e^{16} 
	+ 2\,\im\,C\,e^{24} - \im\,A\,e^{35} + \re\,A\,e^{36},\\
de^8 = -2\,\re\,B\,e^{13} + \im\,A\,e^{15} - \re\,A\,e^{16} 
	- 2\,\re\,C\,e^{24} + \re\,A\,e^{35} + \im\,A\,e^{36}.\\
\end{cases}
\]
Notice that $A\neq 0$ ensures that $\mfg$ has nilpotency step 3, while $C\neq 0$ is needed for the non-degeneration of the complex symplectic form $\omega_\bC$. As we already pointed out in Remark \ref{rem:nilpstep}, $\mfg$ carries no hypercomplex structure by a result of Dotti and Fino. Fixing for instance $A=C=1$, $B=0$, we see that $\mfg$ has a rational structure, hence the corresponding connected, simply connected, nilpotent Lie group $G$ admits a lattice $\Gamma$, and $N=\Gamma\backslash G$ carries a complex symplectic structure but no left-invariant hypercomplex structures.

The center of $\mfg$ is $\mfz(\mfg)=\la e_7,e_8\ra$ which is $J$-invariant. One sees that $\mfa^{\perp}/\mfa$ is isomorphic to $\bR^4=\la e_4,e_2,e_5,e_6\ra$. Furthermore, if one sets $\alpha=0$, $\beta=0$ and $\gamma=1$ in \eqref{csympform}, the complex symplectic Lie algebra $(\mfg,J,\Re(\omega_\bC))$ can be obtained by complex symplectic oxidation from Proposition \ref{pro:oxidationdataR4} case (i) by setting $a=b=c=0$, $S_{11}=-e^3$, $S_{12}=-e^4$, $S_{22}=e^3$ and $\tau=0$, with $v_1=-e_3$, $v_2=e_1$, $v^1=-e_8$ and $v^2=-e_7$.

\end{example}

\begin{example}
Consider the complex Heisenberg group
\[
H_3^\bC=\left\{\begin{pmatrix} 1 & z_1 & z_3\\ 0 & 1 & z_2\\ 0 & 0 & 1\end{pmatrix} \mid z_1,z_2,z_3\in\bC\right\}\,;
\]
notice that it is a nilpotent Lie group. $\Xi=\{A\in H_3^\bC \mid z_1,z_2,z_3\in \bZ[i]\}\subset H_3^\bC$ is a lattice and $\Xi\bs H_3^\bC$ is the \emph{Iwasawa manifold}. We set $G=H_3^\bC\times\bC$ and $\Gamma=\Xi\times\bZ[i]$. Thus $\Gamma\bs G$ is a nilmanifold. A basis of left-invariant holomorphic 1-forms on $G$ is given by
\[
\varphi^1=dz_1, \quad \varphi^2=dz_2, \quad \varphi^3=dz_3-z_1dz_2 \quad \textrm{and} \quad \varphi^4=dz_4\,,
\]
$z_4$ being the coordinate on the $\bC$ factor. Then
\[
d\varphi^1=0, \quad d\varphi^2=0, \quad d\varphi^3=-\varphi^{12}\quad \textrm{and} \quad d\varphi^4=0\,.
\]
The Lie algebra of $G$ is $\mfg=\mfh_3^\bC\oplus\bC=(\mfh_3\oplus\bR)\otimes\bC$, the complexification of $\mfh_3\oplus\bR$. As shown in \cite[Example 4.1]{Cattaneo-Tomassini}, $\Gamma\bs G$ admits the following family of complex symplectic structures:
\begin{equation}\label{sform_CT}
\omega_\bC=\alpha\varphi^{12}+\beta\varphi^{13}+\gamma\varphi^{14}+\delta\varphi^{23}+\varepsilon\varphi^{24}\,.
\end{equation}
This example was also considered by Guan in \cite{Guan1}.
We now show how, for selected values of these parameters, $(\mfh_3^\bC\oplus\bC,J,\omega_\bC)$ can be obtained by oxidation (notice that here the complex structure $J$ is given in terms of the basis $\{\varphi^1,\ldots,\varphi^4\}$ of $(1,0)$-forms). We fix the following real basis of 1-forms
\[
\begin{cases}
\varphi^1 = e^1-ie^2\\
\varphi^2 = e^3-ie^4\\
\varphi^3 = e^7-ie^8\\
\varphi^4 = e^5-ie^6
\end{cases}
\]
and let $\{e_i\}_{i=1}^8$ be the dual basis. Thus $\mfg=(0,0,0,0,0,0,-13+24,-14-23)$. 

Choosing $\omega_\bC=\varphi^{13}-\varphi^{24}$ in \eqref{sform_CT}, and taking its real part, we see that $\omega=e^{17}-e^{28}-e^{35}+e^{46}$ is a symplectic form with respect to which $J$ is symmetric. The central ideal $\mfa=\langle e_6,-e_5\rangle\cong V^*$ is clearly $J$-invariant and $\mfa^\perp/\mfa\cong\bR^4=\la -e_2,e_1,e_7,e_8\ra$; choosing $\{-e_4,-e_3\}$ as a basis of $V$, the complement of $\mfa^\perp$ in $\mfg$, the corresponding oxidation data are $\tau=0$, $S=0$ and $A,B$ as in case (ii) of Proposition \ref{pro:oxidationdataR4} with $a=0$ and $b=-1$.


Picking $\omega_\bC=\varphi^{14}+\varphi^{23}$ in \eqref{sform_CT}, and taking the real part, we obtain the symplectic form $\omega=e^{15}-e^{26}+e^{37}-e^{48}$, with respect to which $J$ is symmetric. $\mfa=\langle e_7,e_8\rangle\cong V^*$ is a central, $J$-invariant ideal and $\mfa^\perp/\mfa\cong\bR^4=\la-e_2,e_1,e_5,e_6\ra$. In the notation of Lemma \ref{le:oxidationdata}, we have $v_1=-e_3$ and $v_2=e_4$, spanning $V$; moreover, $S_{11}=e^1$, $S_{12}=e^2$ and $S_{22}=-e^1$. The other oxidation data are $\tau=0$, $f=0$, i.e. we are in case (v) of Proposition \ref{pro:oxidationdataR4}.

\end{example}

\begin{example}
We show that our construction also works in the more general context of solvable Lie algebras. Consider the compact complex manifold $X_0$ which is the product of the Nakamura threefold and a complex torus. $X_0$ has the structure of a solvmanifold $\Gamma\bs G$, where $G$ is a certain solvable complex Lie group and $\Gamma\subset G$ is a lattice. A global, left-invariant frame of holomorphic 1-forms on $G$ is given by
\[
\varphi^1=dz_1, \quad \varphi^2=e^{-z_1}dz_2, \quad \varphi^3=e^{z_1}dz_3 \quad \textrm{and} \quad \varphi^4=dz_4\,.
\]
In terms of  $\{\varphi^1,\ldots,\varphi^4\}$, the structure equations of $\mfg$ are
\[
d\varphi^1=0, \quad d\varphi^2=-\varphi^{12}, \quad d\varphi^3=\varphi^{13} \quad \textrm{and} \quad d\varphi^4=0\,.
\]

The form $\omega_{\bC}=\alpha\varphi^{14}+\beta\varphi^{23}$, $\alpha\beta\neq 0$, is holomorphic, closed and non-degenerate, hence defines a complex symplectic structure on $X_0$. $X_0$ does not satisfy the $\partial\bar{\partial}$-lemma, but some deformations of its complex structures $X_t$, $t\in\bC-\{0\}$, do, see \cite[Section 4]{Angella-Kasuya}. In fact, $X_0$ and its deformations (all of which admit complex symplectic structures) have been considered in \cite[Example 4.2]{Cattaneo-Tomassini}. We fix the following real basis of 1-forms
\[
\begin{cases}
\varphi^1 = e^1-ie^2\\
\varphi^2 = e^3-ie^4\\
\varphi^3 = e^5-ie^6\\
\varphi^4 = e^7-ie^8
\end{cases}
\]
and let $\{e_i\}_{i=1}^8$ be the dual basis. One sees that 
\[
\mfg=(0,0,-13+24,-14-23,15-26,16+25,0,0)\,.
\]

We choose $\omega_\bC=\varphi^{14}+\varphi^{23}$; its real part $\omega=e^{17}-e^{28}+e^{35}-e^{46}$ is a symplectic form with respect to which $J$ is symmetric. The central ideal $\mfa=\langle e_7,e_8\rangle\cong V^*$ is clearly $J$-invariant and $\mfa^\perp/\mfa\cong\bR^4=\la e_3,e_4,-e_6,e_5\ra$; choosing $\{-e_1,e_2\}$ as a basis of $V$, the complement of $\mfa^\perp$ in $\mfg$, the corresponding oxidation data are $\tau=0$, $S=0$ and the derivations
\[
f(-e_1)=\begin{pmatrix} -1 & 0 & 0 & 0\\ 0 & -1 & 0 & 0\\ 0 & 0 & 1 & 0\\ 0 & 0 & 0 & 1\end{pmatrix},\quad f(e_2)=\begin{pmatrix} 0 & -1 & 0 & 0\\ 1 & 0 & 0 & 0\\ 0 & 0 & 0 & 1\\ 0 & 0 & -1 & 0\end{pmatrix}\,.
\]
with respect to the adapted basis $\{e_3,e_4,-e_6,e_5\}$ of $(\bR^4,J_0,\omega_0)$.


\end{example}

\medskip\noindent
{{\bf Acknowledgements.} The authors would like to thank Ilka Agricola, Daniele Angella, S\"onke Rollenske and Luis Ugarte for useful conversations. The first author was supported by a \emph{Juan de la Cierva - Incorporaci\'on} Fellowship of Spanish Ministerio de Ciencia, Innovaci\'on y Universidades. The second author was partly supported by a \emph{Forschungsstipendium} (FR 3473/2-1) from the Deutsche Forschungsgemeinschaft (DFG). The third author was partially supported by the project MTM2017-85649-P (AEI/FEDER, UE) and the group E22-17R ``Algebra y Geometr\'{\i}a'' (Gobierno de Arag\'on/FEDER).


\end{document}